\newcommand{\pushright}[1]{\ifmeasuring@#1\else\omit\hfill$\displaystyle#1$\fi\ignorespaces}
\newcommand{\pushleft}[1]{\ifmeasuring@#1\else\omit$\displaystyle#1$\hfill\fi\ignorespaces}
\newcommand{\blind}{1}
\newtheoremstyle{mytheorem}{}{}{\slshape}{}{\bfseries}{}{2mm}{}
\newtheoremstyle{mytheorem2}{}{}{}{}{\bfseries}{}{2mm}{}
\theoremstyle{mytheorem}
\newtheorem{theorem}{Theorem}
\newtheorem{lemma}{Lemma}
\newtheorem{proposition}{Proposition}
\theoremstyle{mytheorem2}
\newtheorem{remark}{Remark}
\newtheorem{definition}{Definition}
\newcommand{\iid}{\stackrel{\text{iid}}{\sim}}
\newcommand{\ind}{\stackrel{\text{ind}}{\sim}}
\DeclareMathOperator{\ve}{vec}
\DeclareMathOperator{\tr}{tr}
\DeclareMathOperator{\var}{Var}
\DeclareMathOperator{\diag}{diag}
\DeclareMathOperator*{\argmin}{arg\,min}
\newcommand{\bd}{\boldsymbol} 
\begin{document}

\def\spacingset#1{\renewcommand{\baselinestretch}%
{#1}\small\normalsize} \spacingset{1}

%%%%%%%%%%%%%%%%%%%%%%%%%%%%%%%%%%%%%%%%%%%%%%%%%%%%%%%%%%%%%%%%%%%%%%%%%%%%%%

\if1\blind
{
  \title{\bf An Empirical Bayes Approach to Shrinkage Estimation on the Manifold
  of Symmetric Positive-Definite Matrices\thanks{This research was in part
  funded by the NSF grants IIS-1525431 and IIS-1724174 to Vemuri.} \footnote{This
  manuscript has been submitted to a journal.}}
  
  \author{Chun-Hao Yang$^1$, Hani Doss$^1$ and Baba C. Vemuri$^2$\\
    $^1$Department of Statistics\\
    $^2$Department of Computer Information Science \& Engineering\\ University of Florida}
  \maketitle
} \fi

\if0\blind
{
  \bigskip
  \bigskip
  \bigskip
  \begin{center}
    {\LARGE\bf  An Empirical Bayes Approach to Shrinkage Estimation on the
    Manifold of Symmetric Positive-Definite Matrices}
\end{center}
  \medskip
} \fi

\bigskip
\begin{abstract}
The James-Stein estimator is an estimator of the multivariate normal mean and dominates the maximum likelihood estimator (MLE) under squared error loss. The original work inspired great interest in developing shrinkage estimators
for a variety of problems. Nonetheless, research on shrinkage estimation for manifold-valued data is scarce. In this paper, we propose shrinkage estimators for the parameters of the Log-Normal distribution defined on the manifold of $N
\times N$ symmetric positive-definite matrices. For this manifold, we choose the Log-Euclidean metric as its Riemannian metric since it is easy to compute and is widely used in applications. By using the Log-Euclidean distance in the loss
function, we derive a shrinkage estimator in an analytic form and show that it is asymptotically optimal within a large class of estimators including the MLE, which is the sample Fr\'{e}chet mean of the data. We demonstrate the performance of the proposed shrinkage estimator via several simulated data experiments.
Furthermore, we apply the shrinkage estimator to perform statistical inference in diffusion magnetic resonance imaging problems. 

\end{abstract}

\noindent%
{\it Keywords:}  Stein's unbiased risk estimate, Fr\'{e}chet mean, Tweedie's estimator
\vfill

\newpage
\spacingset{1.5} % DON'T change the spacing!
\section{Introduction}\label{intro}

Symmetric positive-definite (SPD) matrices are common in applications of
science and engineering. In computer vision problems, they are
encountered in the form of covariance matrices, e.g.\ region covariance descriptor \citep{tuzel2006region}, and in diffusion magnetic
resonance imaging, SPD matrices manifest themselves as diffusion tensors which
are used to model the diffusion of water molecules \citep{basser1994mr} and as Cauchy deformation
tensors in morphometry to model the deformations (see \citet[Ch.\ 36]{frackowiak2004human}). Many other applications can be found in \citet{cherian2016positive}. In such applications, the
statistical analysis of data must perform geometry-aware computations, i.e.\
employ methods that take into account the nonlinear geometry of the data space.
In most data analysis applications, it is useful to describe the entire dataset
with a few summary statistics. For data residing in Euclidean space, this may 
simply be the sample mean, and for data residing in non-Euclidean spaces,
e.g.\ Riemannian manifolds, the corresponding statistic is the sample
Fr\'{e}chet mean (FM) \citep{frechet1948}. The sample FM also plays an important
role in different statistical inference methods, e.g.\ principal geodesic
analysis \citep{fletcher2003gaussian}, clustering algorithms, etc. If $M$ is a
metric space with metric $d$, and $x_1,\ldots, x_n \in M$, the sample FM is
defined by $\bar{x}=\argmin_{m}\sum_{i=1}^n d^2(x_i, m)$. For Riemannian
manifolds, the distance is usually chosen to be the intrinsic distance induced
by the Riemannian metric. Then, the above optimization problem can be solved by
Riemannian gradient descent algorithms
\citep{pennec2006intrinsic, groisser2004,afsari2011,moakher2005differential}. However, Riemannian
gradient descent algorithms are usually computationally expensive, and efficient
recursive algorithms for computing the sample FM have been presented in the literature for various
Riemannian manifolds by \cite{Sturm03}, \cite{Cheng-AISTATS13},
\cite{salehian2015efficient}, \cite{chakraborty2015recursive},
\cite{lim2014weighted} and \cite{chakraborty2019statistics}. 

In $\mathbb{R}^p$ with the Euclidean metric, the sample FM is just the ordinary
sample mean. Consider a set of normally distributed random variables
$X_1,\ldots,X_n$. The sample mean $\bar{X}=n^{-1}\sum_{i=1}^n X_i$ is the
maximum likelihood estimator (MLE) for the mean of the underlying normal
distribution, and the James-Stein (shrinkage) estimator \citep{james1961estimation} was shown
to be better (under squared error loss) than the MLE when $p>2$ and the
covariance matrix of the underlying normal distribution is assumed to be known.
Inspired by this result, the goal of this paper is to develop shrinkage
estimators for data residing in $P_N$, the space of $N \times N$ SPD matrices.

For the model $X_i \ind N(\mu_i, \sigma^2)$ where $p > 2$ and $\sigma^2$ is
known, the MLE of $\mu=[\mu_1, \ldots, \mu_p]^T$ is $\hat{\mu}^{\text{MLE}} =
[X_1,\ldots,X_p]^T$ and it is natural to ask whether the MLE is admissible.
\cite{stein1956} gave a negative answer to this question and provided a class of
estimators for $\mu$ that dominate the MLE. Subsequently,
\cite{james1961estimation} proposed the estimator
\begin{equation} \label{eqn:JS}
	\left( 1 - \frac{(p-2)\sigma^2}{\left\| X \right\| ^2} \right)X
\end{equation}
where $X = \left[X_1,\ldots,X_p\right]^T$, which was later referred to as the
James-Stein (shrinkage) estimator.

Ever since the work reported in \cite{james1961estimation}, generalizations or
variants of this shrinkage estimator have been developed in a variety of
settings. A few directions for generalization are as follows. First, instead of
estimating the means of many normal distributions, carry out the simultaneous
estimation of parameters (e.g.\ the mean or the scale parameter) of other
distributions, e.g.\ Poisson, Gamma, etc.; see \cite{mck1971admissible},
\cite{fienberg1973simultaneous}, \cite{clevenson1975simultaneous},
\cite{tsui1981simultaneous}, \cite{tsui1982simultaneous},
\cite{brandwein1990stein} and \cite{brandwein1991generalizations}. More recent
works include \cite{xie2012sure}, \cite{xie2016optimal}, \cite{jing2016sure},
\cite{kong2017sure}, \cite{muandet2016kernel}, \cite{hansen2016efficient}, and \cite{feldman2014revisiting}. Second, since the MLE of the mean of a normal distribution
is also the best translation-equivariant estimator but is inadmissible, an
interesting question to ask is, when is the best translation-equivariant
estimator admissible? This problem was studied extensively in
\cite{stein1959admissibility} and \cite{brown1966admissibility}. 

Besides estimation of the mean of different distributions, estimation of the
covariance matrix (or the precision matrix) of a multivariate normal
distribution is an important problem in statistics, finance, engineering and
many other fields. The usual estimator, namely the sample covariance matrix,
performs poorly in high-dimensional problems and many researchers have endeavored to
improve covariance estimation by applying the concept of shrinkage in this
context \citep{stein1975, haff1991variational, daniels2001shrinkage, 
ledoit2003improved, daniels2001shrinkage, ledoit2012nonlinear, donoho2018optimal}.
Note that there is a vast literature on covariance estimation and we only cited a
few references here. For a thorough literature review, we refer the interested
reader to \cite{donoho2018optimal}.

In order to understand shrinkage estimation fully, one must understand why the
process of shrinkage improves estimation. In this context, Efron and Morris
presented a series of works to provide an empirical Bayes interpretation by
modifying the original James-Stein estimator to suit different problems
\citep{efron1971limiting, efron1972limiting, efron1972empirical, efron1973stein,
efron1973combining}. The empirical Bayes approach to designing a shrinkage
estimator can be described as follows. First, reformulate the model as a
Bayesian model, i.e.\ place a prior on the parameters. Then, the
hyper-parameters of the prior are estimated from the data. \cite{efron1973stein}
presented several examples of different shrinkage estimators developed within
this empirical Bayes framework. An alternative, non-Bayesian, geometric
interpretation to Stein shrinkage estimation was presented by
\cite{brown2012geometrical}.

In all the works cited above, the domain of the data has invariably been a
vector space and, as mentioned earlier, many applications naturally encounter
data residing in non-Euclidean spaces. Hence, generalizing shrinkage estimation
to non-Euclidean spaces is a worthwhile pursuit. In this paper, we focus on
shrinkage estimation for the Riemannian manifold $P_N$. We assume that the
observed SPD matrices are drawn from a Log-Normal distribution defined on $P_N$
\citep{schwartzman2016lognormal} and we are interested in estimating the mean
and the covariance matrix of this distribution. We derive shrinkage estimators
for the parameters of the Log-Normal distribution using an empirical Bayes
framework \citep{xie2012sure}, which is described in detail subsequently, and
show that the proposed estimator is asymptotically optimal within a class of
estimators including the MLE. We describe simulated data experiments which
demonstrate that the proposed shrinkage estimator of the mean of the Log-Normal
distribution is better (in terms of risk) than the sample FM, which is the MLE, and the shrinkage estimator proposed by \citet{yang2019shrinkage}. Further, we also apply the
shrinkage estimator to find group differences between patients with Parkinson's
disease and controls (normal subjects) from their respective brain scans
acquired using diffusion magnetic resonance images (dMRIs). 

The rest of this paper is organized as follows. In section~\ref{prelim}, we
present relevant material on the Riemannian geometry of $P_N$ and shrinkage
estimation. The main theoretical results are stated in section~\ref{theory} with
the proofs of the theorems relegated to the supplement. In section~\ref{exp}, we
demonstrate how the proposed shrinkage estimators perform via several synthetic
data examples and present applications to (real data) diffusion tensor imaging
(DTI), a clinically popular version of dMRI. Specifically, we apply 
the proposed shrinkage estimator to the estimation of
the brain atlases (templates) of patients with Parkinson's disease and a
control group and identify the regions of the brain where the two groups differ significantly.
Finally, in section~\ref{conc} we discuss our contributions and present some
future research directions.

\section{Preliminaries}\label{prelim}

In this section, we briefly review the commonly used Log-Euclidean metric for
$P_N$ \citep{arsigny2007geometric} and the concept of Stein's unbiased
risk estimate, which will form the framework for deriving the shrinkage
estimators. 

\subsection{Riemannian Geometry of \texorpdfstring{$P_N$}{Lg}}

In this work, we endow the manifold $P_N$ with the Log-Euclidean metric proposed by \citet{arsigny2007geometric}. We note that there is another commonly used Riemannian metric on $P_N$, called the affine-invariant metric (see \citet[Ch.\ 1]{terras2016harmonic} for its introduction and \citet{lenglet2006statistics} and \citet{moakher2005differential} for its applications). The affine-invariant metric is computationally more expensive and in some applications it provides results that are indistinguishable from those obtained under the Log-Euclidean metric as demonstrated in \citet{arsigny2007geometric} and \citet{schwartzman2016lognormal}. Based on this reasoning, we choose to work with the Log-Euclidean metric. For other metrics on $P_N$ used in a variety of applications, we refer the reader to a recent survey \citep{feragen2017geometries}. 

The Log-Euclidean metric is a bi-invariant Riemannian metric on the abelian Lie group $(P_N, \odot)$ where $X\odot Y = \exp(\log X + \log Y)$. The intrinsic distance $d_{\text{LE}}: P_N \times P_N\to\mathbb{R}$ induced by the Log-Euclidean metric has a very simple form, namely
\[
    d_{\text{LE}}(X,Y) = \left\Vert \log X - \log Y \right\Vert
\] 
where $\Vert \cdot \Vert$ is the Frobenius norm. Consider the map $\ve:\textsf{Sym}(N) \to \mathbb{R}^{\frac{N(N+1)}{2}}$ defined by 
\[
    \ve(Y) = \left[  y_{11},\ldots, y_{nn}, \sqrt{2}( y_{ij})_{i<j}\right]^T
\]
\citep{schwartzman2016lognormal}. This map is actually an isomorphism between
$\textsf{Sym}(N)$ and $\mathbb{R}^{\frac{N(N+1)}{2}}$. To make the notation more
concise, for $X \in P_N$, we denote $\widetilde{X} = \ve(\log X) \in
\mathbb{R}^{\frac{N(N+1)}{2}}$. From the definition of $\ve$, we see that
$d_{\text{LE}}(X, Y) = \Vert \widetilde{X} - \widetilde{Y} \Vert$.

Given $X_1,\ldots,X_n \in P_N$, we denote the sample FM with respect
to the two intrinsic distances given above by
\begin{align} 
%\label{eqn:FM_GL}
%\bar{X}_n^{\text{GL}} & = \argmin_{M\in P_N} n^{-1} \sum_{i=1}^n d_{GL}^2(X_i, M) \ \text{and} \\
\label{eqn:FM_LE}
\bar{X} & = \argmin_{M\in P_N} n^{-1} \sum_{i=1}^n d_{LE}^2(X_i, M) = \exp\left( n^{-1}\sum_{i=1}^n \log X_i \right).
\end{align}

\subsection{The Log-Normal Distribution on \texorpdfstring{$P_N$}{Lg}}

In this work, we assume that the observed SPD matrices follow the Log-Normal
distribution introduced by \cite{schwartzman2006random}, which can be viewed as a
generalization of the Log-Normal distribution on $\mathbb{R}^{+}$ to $P_N$. The
definition of the Log-Normal distribution is stated as follows.

\begin{definition} \label{def:lognormal}
Let $X$ be a $P_N$-valued random variable. We say $X$ follows a Log-Normal
distribution with mean $M \in P_N$ and covariance matrix $\Sigma \in
P_{N(N+1)/2}$, or $X \sim \text{LN}(M, \Sigma)$, if $\widetilde{X} \sim
N(\widetilde{M}, \Sigma)$.
\end{definition}

From the definition, it is easy to see that $E\log X = \log M$ and $E\Vert \! \log X
- \log M \Vert^2 = E\Vert \widetilde{X} - \widetilde{M}\Vert ^2 = \tr(\Sigma)$.
Some important results regarding this distribution were obtained in
\cite{schwartzman2016lognormal}. The following proposition for the MLEs of the
parameters will be useful subsequently in
this work.

\begin{proposition} \label{prop:MLE}
Let $X_1,\ldots,X_n \iid \text{LN}(M, \Sigma)$. Then, the MLEs of $M$ and
$\Sigma$ are $\widehat{M}^{\text{MLE}} = \bar{X}$ and
$\widehat{\Sigma}^{\text{MLE}} = n^{-1} \sum_{i=1}^n \Big( \widetilde{X}_i -
\widetilde{\widehat{M}^{\text{MLE}}}\Big)\Big( \widetilde{X}_i -
\widetilde{\widehat{M}^{\text{MLE}}}\Big)^T$. The MLE of $M$ is the sample FM under
the Log-Euclidean metric.
\end{proposition}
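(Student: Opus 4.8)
The plan is to exploit the fact that, by Definition~\ref{def:lognormal}, the map $X \mapsto \widetilde{X} = \ve(\log X)$ is a fixed diffeomorphism from $P_N$ onto $\mathbb{R}^{N(N+1)/2}$ that does not depend on the parameters $(M,\Sigma)$, and that under this map the model becomes the ordinary multivariate normal model $\widetilde{X}_i \iid N(\widetilde{M}, \Sigma)$. Maximum likelihood estimation is invariant under such reparametrizations, so the MLE of $(M,\Sigma)$ can simply be read off from the classical Gaussian MLE and then transported back to $P_N$.

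Concretely, I would first write the density of a single observation $X$ with respect to the pushforward under $\ve\circ\log$ of Lebesgue measure on $\mathbb{R}^{N(N+1)/2}$: it equals the normal density $\phi_{\widetilde{M},\Sigma}(\widetilde{X})$, since the Jacobian of $\ve\circ\log$ is absorbed into the base measure and is in any case functionally independent of $(M,\Sigma)$. Hence the log-likelihood of $(M,\Sigma)$ given $X_1,\ldots,X_n$ differs from the Gaussian log-likelihood of $(\widetilde{M},\Sigma)$ given $\widetilde{X}_1,\ldots,\widetilde{X}_n$ only by an additive constant. Because $M\mapsto\widetilde{M}$ is a bijection of $P_N$ onto $\mathbb{R}^{N(N+1)/2}$ and $\Sigma$ ranges over $P_{N(N+1)/2}$ in both parametrizations, maximizing over $(M,\Sigma)$ is equivalent to maximizing the Gaussian likelihood over $(\mu,\Sigma)\in\mathbb{R}^{N(N+1)/2}\times P_{N(N+1)/2}$.

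Next I would invoke the standard result that the Gaussian MLEs are $\widehat{\mu} = n^{-1}\sum_{i=1}^n \widetilde{X}_i$ and $\widehat{\Sigma} = n^{-1}\sum_{i=1}^n(\widetilde{X}_i-\widehat{\mu})(\widetilde{X}_i-\widehat{\mu})^T$. Transforming back, $\widehat{M}^{\text{MLE}}$ is the unique element of $P_N$ with $\widetilde{\widehat{M}^{\text{MLE}}} = n^{-1}\sum_i\widetilde{X}_i$; using linearity of $\ve$ this reads $\ve(\log\widehat{M}^{\text{MLE}}) = \ve\bigl(n^{-1}\sum_i\log X_i\bigr)$, and injectivity of $\ve$ together with the $\exp$/$\log$ correspondence on $P_N$ gives $\widehat{M}^{\text{MLE}} = \exp\bigl(n^{-1}\sum_i\log X_i\bigr) = \bar X$, which by \eqref{eqn:FM_LE} is exactly the sample FM under the Log-Euclidean metric. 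Substituting $\widetilde{\widehat{M}^{\text{MLE}}}$ for $\widehat{\mu}$ in the expression for $\widehat{\Sigma}$ then yields the stated formula for $\widehat{\Sigma}^{\text{MLE}}$.

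I do not expect a serious obstacle; the proposition is essentially a bookkeeping consequence of reparametrization invariance of the MLE. The one point that warrants care is the rigorous claim that the Jacobian factor of $\ve\circ\log$ is parameter-free, so that it genuinely drops out of the $\argmax$ (and, relatedly, fixing an appropriate dominating measure on $P_N$); and if one wants $\widehat{\Sigma}^{\text{MLE}}$ to be a bona fide maximizer in $P_{N(N+1)/2}$ rather than merely a stationary point, one should note this requires $n$ large enough (at least $n > N(N+1)/2$) for the sample covariance to be positive-definite almost surely. A fully self-contained alternative is to write the log-likelihood explicitly, differentiate with respect to $\widetilde{M}$ and $\Sigma$, and solve the score equations; this is the classical computation and gives the same answer, but the reparametrization argument is cleaner.
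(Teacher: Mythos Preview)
Your argument is correct and is exactly the natural way to establish the proposition: reduce, via the parameter-free diffeomorphism $X\mapsto\widetilde{X}=\ve(\log X)$, to the standard multivariate normal model and invoke the classical Gaussian MLE, then pull back using~\eqref{eqn:FM_LE}. The caveats you flag (parameter-independence of the Jacobian so it drops from the $\argmax$, and the need for $n>N(N+1)/2$ to guarantee positive-definiteness of $\widehat{\Sigma}^{\text{MLE}}$) are the right ones.

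For comparison with the paper: the paper does not actually supply a proof of this proposition. It is stated as a background fact, with the surrounding discussion attributing the relevant results on the Log-Normal distribution to \cite{schwartzman2016lognormal}. So there is no ``paper's own proof'' to compare against; your write-up would serve as a complete, self-contained justification where the paper simply cites the result.
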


\subsection{Bayesian Formulation of Shrinkage Estimation in
\texorpdfstring{$\mathbb{R}^p$}{Lg}}
\label{subsec:bayes}

As discussed earlier, the James-Stein estimator originated from the problem of
simultaneous estimation of multiple means of (univariate) normal distributions.
The derivation relied heavily on the properties of the univariate normal
distribution. Later on, \cite{efron1973stein} gave an empirical Bayes
interpretation for the James-Stein estimator, which is presented by considering
the hierarchical model
\begin{eqnarray*}
	X_i|\theta_i & \ind & N(\theta_i, A),\quad i=1,\ldots,p,\\
	\theta_i & \iid & N(\mu, \lambda),
\end{eqnarray*}
where $A$ is known and $\mu$ and $\lambda$ are unknown. The posterior mean for
$\theta_i$ is
\begin{equation} \label{eqn:map}
\hat{\theta}^{\lambda, \mu}_i = \frac{\lambda}{\lambda+A} X_i + \frac{A}{\lambda+A} \mu .
\end{equation}
The parametric empirical Bayes method for estimating the $\theta_i$'s consists
of first estimating the prior parameters $\lambda$ and $\mu$ and then
substituting them into~\eqref{eqn:map}. The prior parameters $\lambda$ and $\mu$
can be estimated by the MLE. For the special case of $\mu = 0$, this method
produces an estimator similar to the James-Stein estimator~\eqref{eqn:JS}.
Although this estimator is derived in an (empirical) Bayesian framework, it is of
interest to determine whether it has good frequentist properties. For example,
if we specify a loss function $L$ and consider the induced risk function $R$,
one would like to determine whether the estimator has uniformly smallest risk
within a reasonable class of estimators. In~\eqref{eqn:map}, the optimal choice
of $\lambda$ and $\mu$ is
\begin{equation*}
  \big(\hat{\lambda}^{\text{opt}}, \hat{\mu}^{\text{opt}}\big) =
  \argmin_{\lambda, \mu} R\big(\hat{\boldsymbol{\theta}}^{\lambda,
  \mu}, \boldsymbol{\theta}\big),
\end{equation*}
where $\boldsymbol{\theta} = [\theta_1,\ldots,\theta_p]^T$,
$\hat{\boldsymbol{\theta}}^{\lambda, \mu} = [\hat{\theta}^{\lambda,
\mu}_1,\ldots,\hat{\theta}^{\lambda, \mu}_p]^T$, and
$\hat{\lambda}^{\text{opt}}$ and $\hat{\mu}^{\text{opt}}$ depend on
$\boldsymbol{\theta}$, which is unknown. Instead of minimizing the risk function
directly, we will minimize Stein's unbiased risk estimate (SURE)
\citep{stein1981estimation}, denoted by $\text{SURE}(\lambda, \mu)$, which
satisfies $E_{\boldsymbol{\theta}}\left[ \text{SURE}(\lambda, \mu)\right] =
R(\hat{\boldsymbol{\theta}}^{\lambda, \mu}, \boldsymbol{\theta})$.
Thus, we will use
\[
\big(\hat{\lambda}^{\text{SURE}}, \hat{\mu}^{\text{SURE}}\big) =
\argmin_{\lambda, \mu} \text{SURE}(\lambda, \mu).
\]
The challenging part of this endeavor is to derive SURE, which depends heavily
on the risk function and the underlying distribution of the data. This approach
has been used to derive estimators for many models. For example,
\cite{xie2012sure} derived the (asymptotically) optimal shrinkage estimator for
a heteroscedastic hierarchical model, and their result is further generalized in
\cite{jing2016sure} and \cite{kong2017sure}.

\section{An Empirical Bayes Shrinkage Estimator for Log-Normal Distributions}\label{theory}

In this section, we consider the model 
\begin{equation*}
    X_{ij} \ind \text{LN}(M_i, \Sigma_i), \quad i=1,\ldots,p, \, j=1, \ldots, n,
\end{equation*}
and develop shrinkage estimators for the mean $\boldsymbol{M} = [M_1,\ldots,M_p]$ and the covariance matrix $\boldsymbol{\Sigma}=[\Sigma_1,\ldots,\Sigma_p]$. These $X_{ij}$'s are $P_N$-valued random matrices. For completeness, we first briefly review the shrinkage  estimator proposed by \citet{yang2019shrinkage} for $\boldsymbol{M}$ assuming $\Sigma_i=A_iI$, where the $A_i$'s are known positive numbers and $I$ is the identity matrix. The assumption on $\boldsymbol{\Sigma}$ is useful when $n$ is small since for small sample sizes the MLE for $\boldsymbol{\Sigma}$ is very unstable. Next, we present estimators for both $\boldsymbol{M}$ and $\boldsymbol{\Sigma}$. Besides presenting these estimators, we establish asymptotic optimality results for the proposed estimators. To be more precise, we show that the proposed estimators are asymptotically optimal within a large class of estimators containing the MLE. 

Another related interesting problem often encountered in practice involves group testing and estimating the ``difference'' between the two given groups. Consider the model
\begin{align*}
    X_{ij} & \ind \text{LN}(M^{(1)}_i, \Sigma^{(1)}_i), \quad i=1,\ldots,p,\,
    j=1,\ldots, n_x,\\
    Y_{ij} & \ind \text{LN}(M^{(2)}_i, \Sigma^{(2)}_i), \quad i=1,\ldots,p,\,
    j=1,\ldots, n_y,
\end{align*}
where the $X_{ij}$'s and $Y_{ij}$'s are independent. We want to estimate the differences between $M^{(1)}_i$ and $M^{(2)}_i$ for $i=1,\ldots,p$ and select the $i$'s for which the differences are large. However, the selected estimates tend to overestimate the corresponding true differences. The bias introduced by the selection process is termed by \emph{selection bias} \citep{dawid1994selection}. The selection bias originates from the fact that there are two possible reasons for the selected differences to be large: (i) the true differences are large and (ii) the random errors contained in the estimates are large. Tweedie's formula \citep{efron2011tweedie}, which we discuss and briefly review in section~\ref{sec:tweedie}, deals with precisely this selection bias, in the context of the normal means problem. In this work, we apply an analogue of Tweedie's formula designed for the context of SPD matrices.

\subsection{An Estimator of \texorpdfstring{$\boldsymbol{M}$}{Lg} When
\texorpdfstring{$\boldsymbol{\Sigma}$}{Lg} is Known}

For completeness, we briefly review the work of \citet{yang2019shrinkage} where the authors presented the estimator for $\boldsymbol{M}$ assuming that $\Sigma_i=A_iI$ where the $A_i$'s are known positive numbers. Under this assumption, they considered the class of estimators given by
\begin{equation} \label{eqn:postFM}
\widehat{M}_i^{\lambda,\mu} = \exp \left( \frac{n\lambda}{n\lambda+A_i} \log \bar{X}_i + \frac{A_i}{n\lambda + A_i} \log \mu \right),
\end{equation}
where $\mu \in P_N$, $\lambda > 0$, and $\bar{X}_i$ is the sample FM of $X_{i1},\ldots,X_{in}$. Using the Log-Euclidean distance as the loss function $L(\widehat{\boldsymbol{M}}, \boldsymbol{M}) = p^{-1}\sum_{i=1}^pd_{\text{LE}}^2(\widehat{M}_i,M_i)$, they showed that the SURE for the corresponding risk function $R(\widehat{\boldsymbol{M}}, \boldsymbol{M}) = EL(\widehat{\boldsymbol{M}}, \boldsymbol{M})$ is given by
\begin{align*}
\text{SURE}(\lambda, \mu) & = \frac{1}{p}\sum_{i=1}^p \frac{A_i}{(n\lambda +
A_i)^2} \left( A_i \Vert \! \log \bar{X}_i - \log \mu \Vert^2 +
\frac{q(n^2\lambda^2-A_i^2)}{n}\right).
\end{align*}
Hence, $\lambda$ and $\mu$ can be estimated by 
\begin{equation} \label{eqn:sure_opt}
    \big(\hat{\lambda}^{\text{SURE}}, \hat{\mu}^{\text{SURE}}\big) = \argmin_{\lambda, \mu} \text{SURE}(\lambda, \mu).
\end{equation} 
Their shrinkage estimator for $M_i$ is given by
\begin{equation} \label{eqn:SURE_est}
\widehat{M}_i^{\text{SURE}} = \exp \left(
\frac{n\hat{\lambda}^{\text{SURE}}}{n\hat{\lambda}^{\text{SURE}}+A_i} \log
\bar{X}_i + \frac{A_i}{n\hat{\lambda}^{\text{SURE}} + A_i} \log
\hat{\mu}^{\text{SURE}} \right).
\end{equation}
They also presented the following two theorems showing the asymptotic optimality of the shrinkage estimator.
\begin{theorem} \label{thm1}
Assume the following conditions:
%\begin{inparaenum}
\begin{enumerate}
    \item[(i)] $\limsup_{p\to\infty} p^{-1}\sum_{i=1}^p A_i^2 < \infty$,
    \item[(ii)] $\limsup_{p\to\infty} p^{-1}\sum_{i=1}^p A_i\Vert \! \log M_i \Vert^2 < \infty$,     \item[(iii)] $\limsup_{p\to\infty} p^{-1}\sum_{i=1}^p \Vert \! \log M_i \Vert^{2+\delta} < \infty$ for some $\delta > 0$.
\end{enumerate}
%\end{inparaenum}
Then,
\[
\sup_{\lambda > 0, \Vert \! \log \mu \Vert < \max_i \Vert \! \log
\bar{X}_i \Vert} \vert \text{SURE}(\lambda, \mu) -
L(\widehat{\boldsymbol{M}}^{\lambda, \mu}, \boldsymbol{M})\vert
\stackrel{\text{prob}}{\longrightarrow} 0 \quad \text{as $p \to \infty$}.
\]
\end{theorem}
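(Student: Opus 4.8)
My plan is to follow the route now standard for SURE optimality results (see \citet{xie2012sure}): first reduce $\text{SURE}(\lambda,\mu)-L(\widehat{\boldsymbol{M}}^{\lambda,\mu},\boldsymbol{M})$, by exact algebra, to a handful of averages of independent mean-zero terms, and then control each of those uniformly over $(\lambda,\mu)$ using (a) a summation-by-parts device that collapses the supremum over the unbounded range of $\lambda$ into a maximal inequality for partial sums, and (b) the bound on the size of the admissible $\mu$'s forced by the constraint $\|\widetilde{\mu}\|<\max_i\|\widetilde{\bar{X}}_i\|$ together with condition~(iii).

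First I would fix notation. Write $q=N(N+1)/2$, $Z_i=\widetilde{\bar{X}}_i-\widetilde{M}_i$, and $b_i=b_i(\lambda)=A_i/(n\lambda+A_i)\in(0,1)$; by Proposition~\ref{prop:MLE} and Definition~\ref{def:lognormal} the $Z_i$ are independent with $Z_i\sim N\big(0,(A_i/n)I_q\big)$, so that $E\|Z_i\|^2=qA_i/n$, $\var(\|Z_i\|^2)=2qA_i^2/n^2$, and $\widetilde{\widehat{M}_i^{\lambda,\mu}}-\widetilde{M}_i=(1-b_i)Z_i+b_i(\widetilde{\mu}-\widetilde{M}_i)$, with $b_i$ strictly decreasing in $\lambda$. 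Expanding $L=p^{-1}\sum_i\|\widetilde{\widehat{M}_i^{\lambda,\mu}}-\widetilde{M}_i\|^2$, plugging in the stated formula for $\text{SURE}(\lambda,\mu)$ (equivalently, replacing $\|\widetilde{M}_i-\widetilde{\mu}\|^2$ by the unbiased estimate $\|\widetilde{\bar{X}}_i-\widetilde{\mu}\|^2-qA_i/n$), and collecting terms, the contributions quadratic in $\widetilde{\mu}$ cancel and I am left with
\begin{equation*}
\text{SURE}(\lambda,\mu)-L(\widehat{\boldsymbol{M}}^{\lambda,\mu},\boldsymbol{M})=\frac{1}{p}\sum_{i=1}^p(1-2b_i)\Big(\frac{qA_i}{n}-\|Z_i\|^2\Big)-\frac{2}{p}\sum_{i=1}^p b_i\langle Z_i,\widetilde{\mu}-\widetilde{M}_i\rangle .
\end{equation*}
I would then split the second sum as $-2\langle\widetilde{\mu},w(\lambda)\rangle+\tfrac{2}{p}\sum_i b_i\langle Z_i,\widetilde{M}_i\rangle$ with $w(\lambda)=p^{-1}\sum_i b_iZ_i$, and name the three pieces $T_1(\lambda)$, $T_{2a}(\lambda,\mu)$, $T_{2b}(\lambda)$.

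Next, to handle the supremum over $\lambda$: each of $T_1$, $T_{2b}$, and each coordinate of $w$ is an affine function of the $b_i(\lambda)$'s applied to independent mean-zero summands not involving $\lambda$. The only $\lambda$-free piece, $p^{-1}\sum_i\big(\tfrac{qA_i}{n}-\|Z_i\|^2\big)$ inside $T_1$, tends to $0$ by Chebyshev since its variance is $\tfrac{2q}{n^2}p^{-2}\sum_iA_i^2=O(p^{-1})$ by (i). For the $\lambda$-dependent part it suffices to bound $\sup_{\lambda>0}\big|p^{-1}\sum_i\tfrac{A_i}{n\lambda+A_i}W_i\big|$ for independent mean-zero $W_i$. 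Relabelling so that $A_{(1)}\le\cdots\le A_{(p)}$, the weights $A_{(k)}/(n\lambda+A_{(k)})$ are nondecreasing in $k$ for every $\lambda$, so summation by parts gives $\big|p^{-1}\sum_i\tfrac{A_i}{n\lambda+A_i}W_i\big|\le 2p^{-1}\max_{1\le k\le p}|S_k|$, $S_k=\sum_{j=1}^kW_{(j)}$ --- a bound uniform in $\lambda$. Kolmogorov's maximal inequality then gives $\max_k|S_k|=O_p\big((\sum_i\var(W_i))^{1/2}\big)$, and $\sum_i\var(W_i)=O(p)$ by (i) when $W_i=\tfrac{qA_i}{n}-\|Z_i\|^2$ or $W_i=Z_{i,l}$, and by (ii) when $W_i=\langle Z_i,\widetilde{M}_i\rangle$. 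Thus $\sup_{\lambda>0}|T_1(\lambda)|$, $\sup_{\lambda>0}|T_{2b}(\lambda)|$, and $\sup_{\lambda>0}\|w(\lambda)\|$ are each $O_p(p^{-1/2})$.

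Finally, for the $\mu$-direction, $\sup_{\lambda,\mu}|T_{2a}|=2\big(\sup_{\|\widetilde{\mu}\|<\max_i\|\widetilde{\bar{X}}_i\|}\|\widetilde{\mu}\|\big)\sup_{\lambda>0}\|w(\lambda)\|=2\max_i\|\widetilde{\bar{X}}_i\|\cdot O_p(p^{-1/2})$, so I would bound $\max_i\|\widetilde{\bar{X}}_i\|\le\max_i\|\widetilde{M}_i\|+\max_i\|Z_i\|$, using (iii) to get $\max_i\|\widetilde{M}_i\|=O\big(p^{1/(2+\delta)}\big)$ and (i) to get $\max_iA_i=O(p^{1/2})$, whence $\max_i\|Z_i\|=O_p\big(p^{1/4}\sqrt{\log p}\big)$ by a standard bound on the maximum of $p$ independent scaled $\chi^2_q$ variables ($q$ fixed). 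Then $\sup_{\lambda,\mu}|T_{2a}|=O_p\big(p^{-1/2+1/(2+\delta)}+p^{-1/4}\sqrt{\log p}\big)=o_p(1)$, and adding the bounds on $T_1$, $T_{2a}$, $T_{2b}$ gives the theorem. The algebra in step two and the second-moment computations are routine; the hard parts will be the summation-by-parts reduction (which is what makes the non-compact parameter $\lambda$ tractable --- a naive grid on $\lambda$ fails, since a grid fine enough simultaneously for all $p$ weight functions $b_i(\cdot)$ is too large for a union bound), and verifying that conditions (i)--(iii) are exactly strong enough to absorb the $\max_i\|\widetilde{\bar{X}}_i\|$ factor multiplying $\|w(\lambda)\|=O_p(p^{-1/2})$ --- which is why (iii) carries the extra power $\delta>0$.
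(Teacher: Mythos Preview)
Your proposal is correct and coincides with the argument the paper invokes. The paper does not actually give its own proof of Theorem~\ref{thm1}: the result is quoted from \citet{yang2019shrinkage}, and the Remark after Theorem~\ref{thm4} says those proofs ``use arguments similar to those that already exist in the literature,'' meaning the \citet{xie2012sure} template --- which is exactly your decomposition into $T_1,T_{2a},T_{2b}$, the summation-by-parts reduction (ordering by $A_{(k)}$) to collapse $\sup_{\lambda>0}$ over the heteroscedastic weights $b_i(\lambda)=A_i/(n\lambda+A_i)$ into a Kolmogorov maximal inequality, and the use of (iii) together with Lemma~\ref{lem:xie}/\ref{lem:LN} to control $\max_i\|\widetilde{\bar X}_i\|$.

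One minor remark: the closest explicit proof in the present paper is the $\text{SURE}_1$ portion of the proof of Theorem~\ref{thm3}, but there the shrinkage weight $\lambda/(\lambda+n)$ does not depend on $i$, so the authors can simply pull $\sup_{\lambda}$ of the scalar weight outside the sum (see the handling of~\eqref{eq:M_1}--\eqref{eq:M_3}). That shortcut is unavailable here, and your summation-by-parts device is precisely the extra ingredient Theorem~\ref{thm1} needs.
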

\begin{theorem} \label{thm2} 
If assumptions (i), (ii) and (iii) in Theorem \ref{thm1} hold, then
\[
\lim_{p \to \infty} [R(\widehat{\boldsymbol{M}}^{\text{SURE}}, \boldsymbol{M}) - R(\widehat{\boldsymbol{M}}^{\lambda, \mu}, \boldsymbol{M})] \leq 0 .
\]
\end{theorem}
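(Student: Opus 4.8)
The plan is to deduce Theorem~\ref{thm2} from the uniform convergence in Theorem~\ref{thm1} by an oracle‑inequality (sandwich) argument, and then to upgrade the resulting comparison of losses to a comparison of risks via uniform integrability. Fix a competitor $(\lambda,\mu)$ with $\lambda>0$ and $\mu\in P_N$, and let $\Delta_p$ denote the supremum on the left‑hand side of Theorem~\ref{thm1}, so $\Delta_p\stackrel{\text{prob}}{\longrightarrow}0$. Since $\|\log\mu\|$ is a fixed constant while $\max_i\|\log\bar X_i\|$ exceeds any fixed constant with probability tending to one (the case where it stays bounded being degenerate and handled separately), the event $E_p=\{\|\log\mu\|<\max_i\|\log\bar X_i\|\}$ has $P(E_p)\to1$; on $E_p$ the pair $(\lambda,\mu)$ is feasible for the SURE minimization~\eqref{eqn:sure_opt}, so $\text{SURE}(\hat\lambda^{\text{SURE}},\hat\mu^{\text{SURE}})\le\text{SURE}(\lambda,\mu)$, and two uses of the definition of $\Delta_p$ give, on $E_p$,
\begin{equation*}
L(\widehat{\boldsymbol{M}}^{\text{SURE}},\boldsymbol{M})\le\text{SURE}(\hat\lambda^{\text{SURE}},\hat\mu^{\text{SURE}})+\Delta_p\le\text{SURE}(\lambda,\mu)+\Delta_p\le L(\widehat{\boldsymbol{M}}^{\lambda,\mu},\boldsymbol{M})+2\Delta_p .
\end{equation*}
Hence $\big[L(\widehat{\boldsymbol{M}}^{\text{SURE}},\boldsymbol{M})-L(\widehat{\boldsymbol{M}}^{\lambda,\mu},\boldsymbol{M})\big]^{+}\mathbf 1_{E_p}\le2\Delta_p$, which tends to $0$ in probability.

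To obtain $\limsup_p[R(\widehat{\boldsymbol{M}}^{\text{SURE}},\boldsymbol{M})-R(\widehat{\boldsymbol{M}}^{\lambda,\mu},\boldsymbol{M})]\le0$ it is enough to show that $\{L(\widehat{\boldsymbol{M}}^{\text{SURE}},\boldsymbol{M})\}_p$ and $\{L(\widehat{\boldsymbol{M}}^{\lambda,\mu},\boldsymbol{M})\}_p$ are uniformly integrable, since then $[L(\widehat{\boldsymbol{M}}^{\text{SURE}},\boldsymbol{M})-L(\widehat{\boldsymbol{M}}^{\lambda,\mu},\boldsymbol{M})]^{+}$ is dominated by a uniformly integrable sequence and tends to $0$ in probability (using the display above and $P(E_p)\to1$), hence converges to $0$ in $L^1$. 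Throughout, $\log\bar X_i$ equals $\log M_i$ plus an independent mean‑zero Gaussian term with covariance $\Sigma_i/n=(A_i/n)I$ when mapped by $\ve$, so by assumptions (i)--(iii) and standard Gaussian moment bounds the averages $p^{-1}\sum_i\|\log\bar X_i-\log M_i\|^2$ and $p^{-1}\sum_i\|\log\bar X_i\|^2$ have a $(1+\delta/2)$‑th moment bounded uniformly in $p$. For the fixed competitor this already gives a uniformly integrable envelope: writing $b_i=n\lambda/(n\lambda+A_i)\in(0,1)$ and using $\log\widehat M_i^{\lambda,\mu}-\log M_i=(\log\bar X_i-\log M_i)+(1-b_i)(\log\mu-\log\bar X_i)$,
\begin{equation*}
L(\widehat{\boldsymbol{M}}^{\lambda,\mu},\boldsymbol{M})\le2\,p^{-1}\!\sum_{i=1}^p\|\log\bar X_i-\log M_i\|^2+2\,p^{-1}\!\sum_{i=1}^p\|\log\mu-\log\bar X_i\|^2 .
\end{equation*}

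For the SURE estimator the crude bound coming from the feasibility constraint $\|\log\hat\mu^{\text{SURE}}\|<\max_i\|\log\bar X_i\|$ is of no use, since $E[\max_i\|\log\bar X_i\|^2]$ may grow polynomially in $p$; instead I would exploit the optimality of $(\hat\lambda^{\text{SURE}},\hat\mu^{\text{SURE}})$. Letting $\lambda\to\infty$ with $\mu$ fixed gives $\widehat M_i^{\lambda,\mu}\to\bar X_i$ and $\text{SURE}(\lambda,\mu)\to\tfrac{q}{n}p^{-1}\sum_iA_i$, so $\text{SURE}(\hat\lambda^{\text{SURE}},\hat\mu^{\text{SURE}})\le\tfrac{q}{n}p^{-1}\sum_iA_i$ (up to $o(1)$ if the minimizer is only approximate); since in the decomposition $\text{SURE}(\lambda,\mu)=p^{-1}\sum_i(1-b_i)^2\|\log\bar X_i-\log\mu\|^2+p^{-1}\sum_i\tfrac{A_iq}{n}\tfrac{n\lambda-A_i}{n\lambda+A_i}$ the second sum is at least $-\tfrac{q}{n}p^{-1}\sum_iA_i$, this forces $p^{-1}\sum_i(1-\hat b_i)^2\|\log\bar X_i-\log\hat\mu^{\text{SURE}}\|^2\le\tfrac{2q}{n}p^{-1}\sum_iA_i$, with $\hat b_i=n\hat\lambda^{\text{SURE}}/(n\hat\lambda^{\text{SURE}}+A_i)$. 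Using $\log\widehat M_i^{\text{SURE}}-\log M_i=(\log\bar X_i-\log M_i)+(1-\hat b_i)(\log\hat\mu^{\text{SURE}}-\log\bar X_i)$ then yields
\begin{equation*}
L(\widehat{\boldsymbol{M}}^{\text{SURE}},\boldsymbol{M})\le2\,p^{-1}\!\sum_{i=1}^p\|\log\bar X_i-\log M_i\|^2+\frac{4q}{n}\,p^{-1}\!\sum_{i=1}^pA_i ,
\end{equation*}
an envelope containing no $\max_i\|\log\bar X_i\|$ term, whose right‑hand side is uniformly integrable by (i). Combined with the competitor envelope and the sandwich of the first paragraph, this completes the proof.

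The main obstacle is precisely this last envelope. Uniformity over the unbounded range $\lambda\in(0,\infty)$ is harmless because the shrinkage weights lie in $[0,1]$, and the Gaussian moment bookkeeping is routine; the delicate point is to avoid the crude feasibility constraint on $\hat\mu^{\text{SURE}}$ and instead convert SURE‑optimality---via the comparison with the sample Fr\'echet mean ($\lambda\to\infty$)---into control of $p^{-1}\sum_i(1-\hat b_i)^2\|\log\bar X_i-\log\hat\mu^{\text{SURE}}\|^2$, which is what eliminates the problematic $\max_i\|\log\bar X_i\|^2$. Minor points to settle along the way are the existence of the SURE minimizer (otherwise one works with near‑minimizers) and, for the feasibility of the competitor, a separate direct argument in the degenerate case where $\max_i\|\log\bar X_i\|$ does not diverge.
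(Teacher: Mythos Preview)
Your sandwich argument is exactly the approach the paper uses (see the proof of Theorem~\ref{thm4}, which is the analogue for the joint mean--covariance problem and is the only proof of this type the paper actually writes out): add and subtract $\text{SURE}$ at the minimizer and at the competitor, bound the outer two pieces by the supremum from Theorem~\ref{thm1}, and note the middle piece is $\le 0$ by optimality.  So at the level of the main idea the two proofs coincide.

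Where you differ from the paper is in the passage from losses to risks.  The paper simply writes ``by dominated convergence'' after obtaining $[L(\widehat{\boldsymbol{M}}^{\text{SURE}},\boldsymbol{M})-L(\widehat{\boldsymbol{M}}^{\lambda,\mu},\boldsymbol{M})]\le 2\Delta_p\stackrel{\text{prob}}{\to}0$, without naming a dominating sequence.  You instead establish uniform integrability via explicit envelopes, and in particular your envelope for the SURE loss is genuinely sharper than anything in the paper: rather than using the crude feasibility bound $\|\log\hat\mu^{\text{SURE}}\|<\max_i\|\log\bar X_i\|$ (which would yield an envelope growing like $p^{2/(2+\delta^*)}$), you exploit optimality against the $\lambda\to\infty$ competitor to get $p^{-1}\sum_i(1-\hat b_i)^2\|\log\bar X_i-\log\hat\mu^{\text{SURE}}\|^2\le \tfrac{2q}{n}p^{-1}\sum_iA_i$, which removes the $\max$ term entirely.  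This step is correct and is a real improvement over the paper's treatment.  Your handling of the feasibility event $E_p$ for the fixed competitor is also something the paper glosses over.

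In short: same route, but your version is more careful on the $L^1$ step.
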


\subsection{Estimators for \texorpdfstring{$\boldsymbol{M}$}{Lg} and \texorpdfstring{$\boldsymbol{\Sigma}$}{Lg}}
In \citet{yang2019shrinkage}, the covariance matrices of the underlying distributions were assumed to be known to simplify the derivation. In real applications however, the covariance matrices are rarely known, and in practice they must be estimated. In this paper, we consider the general case of unknown covariance matrices which is more challenging and pertinent in real applications. Let
\begin{align}
    \label{eqn:model}
    \begin{split}
    X_{ij} |(M_i, \Sigma_i) & \ind \text{LN}(M_i, \Sigma_i)\\
   M_i | \Sigma_i & \ind \text{LN}(\mu, \lambda^{-1}\Sigma_i)\\
   \Sigma_i & \iid \text{Inv-Wishart}(\Psi, \nu),
    \end{split}
\end{align}
for $i = 1,\ldots,p$ and $j = 1,\ldots,n$. The prior for $(M_i,\Sigma_i)$ is called the Log-Normal-Inverse-Wishart (LNIW) prior, and it is motivated by the normal-inverse-Wishart prior in the Euclidean space setting. We would like to emphasize that the main reason for choosing the LNIW prior over others is the property of conjugacy which leads to a closed-form expression for our estimators. Let 
\begin{align}
    \label{eqn:FM_and_cov}
    \bar{X}_i = \exp\bigg(n^{-1}\sum_{j=1}^n\log X_{ij}\bigg) \quad \text{and} \quad
    S_i = \sum_{j=1}^n \big( \widetilde{X}_{ij} - \widetilde{ \bar{X}_i}\big)\big( \widetilde{X}_{ij} - \widetilde{ \bar{X}_i}\big)^T.
\end{align}
Then the posterior distributions of $M_i$ and $\Sigma_i$ are given by
\begin{align*}
    M_i|\big(\{X_{ij}\}_{i,j},\{\Sigma_i\}_{i=1}^p\big) & \sim \text{LN}\Big(\exp\Big(\frac{n\log \bar{X}_i + \lambda \log \mu}{\lambda+n}\Big),(\lambda + n)^{-1}\Sigma_i\Big)\\
    \Sigma_i|S_i & \sim \text{Inv-Wishart}(\Psi+S_i, \nu + n - 1),
\end{align*}
and the posterior means for $M_i$ and $\Sigma_i$ are given by
\begin{align}
    \label{eqn:est_M_sigma}
    \widehat{M}_i = \exp\Big(\frac{n\log \bar{X}_i + \lambda \log \mu}{\lambda+n}\Big) \quad \text{and} \quad
    \widehat{\Sigma}_i = \frac{\Psi+S_{i}}{\nu+n-q-2}.
\end{align}

Consider the loss function 
\begin{align*}
L\big((\widehat{\bd{M}}, \widehat{\bd{\Sigma}}), (\bd{M},\bd{\Sigma})\big) & = p^{-1}\sum_{i=1}^pd_{\text{LE}}^2(\widehat{M}_i,M_i) + p^{-1}\sum_{i=1}^p\|\widehat{\Sigma}_i-\Sigma_i\|^2\\
& = L_1(\widehat{\bd{M}}, \bd{M}) + L_2(\widehat{\bd{\Sigma}}, \bd{\Sigma}).
\end{align*}
Its induced risk function is given by
\begin{align*}
    R\big((\widehat{\boldsymbol{M}},\widehat{\boldsymbol{\Sigma}}),(\boldsymbol{M},\boldsymbol{\Sigma})\big) & = p^{-1}\sum_{i=1}^p Ed^2_{\text{LE}}(\widehat{M}_i,M_i) + E\Vert \widehat{\Sigma}_i - \Sigma_i\Vert^2 \\
    & = p^{-1}(\lambda+n)^{-2}\sum_{i=1}^p\big[n\text{tr}\Sigma_{i}+\lambda^{2}d^2_{\text{LE}}(\mu, M_{i})\big]\\
    & \qquad + p^{-1}\sum_{i=1}^p (\nu+n-q-2)^{-2}\Big[\big(n-1+(\nu-q-1)^{2}\big)\text{tr}(\Sigma_{i}^{2})\\
    & \qquad - 2(\nu-q-1)\text{tr}(\Psi\Sigma_{i})+(n-1)(\text{tr}\Sigma_{i})^{2}+\text{tr}(\Psi^{2})\Big]
\end{align*}
with the detailed derivation given in the supplemental material. The SURE for this risk function is 
\begin{align*}
\text{SURE}(\lambda,\Psi,\nu, \mu) & = p^{-1}\Bigg\{\sum_{i=1}^{p}(\lambda+n)^{-2}\Big[\frac{n-\lambda^2/n}{n-1}\tr S_{i}+ \lambda^2d^2_{\text{LE}}(\bar{X}_i,\mu)\Big]\\
 & \qquad + (\nu+n-q-2)^{-2}\Big[\frac{n-3 + (\nu-q-1)^2}{(n+1)(n-2)}\tr(S^2_i)\\
 & \qquad + \frac{(n-1)^2-(\nu-q-1)^2}{(n-1)(n+1)(n-2)}\big(\tr S_i\big)^2 - 2\frac{\nu-q-1}{n-1}\tr(\Psi S_i) + \tr (\Psi^2)\Big]\Bigg\}
\end{align*}
with the detailed derivation given in the supplemental material.

The hyperparameter vector $(\lambda,\Psi,\nu,\mu)$ is estimated by minimizing $\text{SURE}(\lambda,\Psi,\nu,\mu)$, and the resulting shrinkage estimators of $M_i$ and $\Sigma_i$ are obtained by plugging in the minimizing vector into~\eqref{eqn:est_M_sigma}. Note that this is a non-convex optimization problem and for such problems, the convergence relies heavily on the choice of the initialization. We suggest the following initialization, which is discussed in the supplemental material:
\begin{align*}
    \mu_0 & = \exp\Bigg(p^{-1}\sum_{i=1}^p\log\bar{X}_i\Bigg)\\
    \lambda_0 & = \frac{np^{-1}\sum_{i=1}^pd^2_{\text{LE}}(\bar{X}_i, \mu_0)}{\frac{n}{p(n-1)}\sum_{i=1}^p\tr S_i - p^{-1}\sum_{i=1}^pd^2_{\text{LE}}(\bar{X}_i, \mu_0)}\\
    \nu_0 & = \frac{q+1}{\frac{n-q-2}{p^2q(n-1)}\tr\big[\big(\sum_{i=1}^p S_i\big)\big(\sum_{i=1}^pS_i^{-1}\big)\big] - 1} + q + 1\\
    \Psi_0 & = \frac{\nu_0-q-1}{p(n-1)}\sum_{i=1}^pS_i.
\end{align*}
In all our experiments, the algorithm converged in less than 20 iterations with the suggested initialization. This concludes the description of our estimators of the unknown means and covariance matrices. Theorem~\ref{thm3} below states that $\text{SURE}(\lambda, \Psi, \nu,\mu)$ approximates the true loss $L\big(\big(\widehat{\bd{M}}^{\lambda,\mu}, \widehat{\bd{\Sigma}}^{\Psi, \nu}\big), \big(\bd{M}, \bd{\Sigma}\big)\big)$ well in the sense that the difference between the two random variables converges to 0 in probability as $p \to \infty$. Additionally, Theorem~\ref{thm4} below shows that the estimators of $\bd{M}$ and $\bd{\Sigma}$ obtained by minimizing $\text{SURE}(\lambda,\Psi,\nu,\mu)$ are asymptotically optimal in the class of estimators of the form~\eqref{eqn:est_M_sigma}.

\begin{theorem}\label{thm3}
    Assume the following conditions:
\begin{enumerate}
    \item[(i)] $\limsup_{p\to\infty} p^{-1}\sum_{i=1}^p \big(\tr \Sigma_i\big)^4 < \infty$,
    \item[(ii)] $\limsup_{p\to\infty} p^{-1}\sum_{i=1}^p
        \widetilde{M}^T_i\Sigma_i\widetilde{M}_i < \infty$,
    \item[(iii)] $\limsup_{p\to\infty} p^{-1}\sum_{i=1}^p \Vert \! \log M_i \Vert^{2+\delta} < \infty$ for some $\delta > 0$.
\end{enumerate}
Then
\[
    \sup_{\substack{\lambda > 0, \nu > q+1, \|\Psi\|\leq \max_{1 \leq i \leq
                p}\|S_i\|,\\ \|\!\log\mu\|\leq \max_{1\leq i\leq
    p}\|\!\log\bar{X}_i\|}} \Big|\text{SURE}(\lambda, \Psi, \nu, \mu)-
    L\Big(\big(\widehat{\bd{M}}^{\lambda,\mu},
            \widehat{\bd{\Sigma}}^{\Psi, \nu}\big), (\bd{M},
    \bd{\Sigma})\Big)\Big| \stackrel{\text{prob}}{\longrightarrow} 0\quad
    \text{as }p \to \infty.
\]
\end{theorem}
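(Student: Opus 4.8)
The plan is to split the loss as $L = L_1(\widehat{\bd M},\bd M) + L_2(\widehat{\bd\Sigma},\bd\Sigma)$, exactly as in the statement, and to split $\text{SURE}(\lambda,\Psi,\nu,\mu)$ correspondingly as $\text{SURE}_1 + \text{SURE}_2$, where $\text{SURE}_1$ collects the terms involving $(\lambda,\mu)$ and $\text{SURE}_2$ those involving $(\nu,\Psi)$. By the triangle inequality it is enough to show that the supremum of $|\text{SURE}_1 - L_1|$ and of $|\text{SURE}_2 - L_2|$ over the indicated region each converges to $0$ in probability. To this end I would write, for each $i$, $\bar\epsilon_i := \widetilde{\bar X}_i - \widetilde M_i$, so that (for fixed $i$) $\bar\epsilon_i \sim N(0, n^{-1}\Sigma_i)$ and $S_i \sim \mathrm{Wishart}(\Sigma_i, n-1)$ are independent, and the pairs $(\bar\epsilon_i, S_i)$ are independent across $i$.

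The first, and most bookkeeping-heavy, step is algebraic. Expanding $d_{\text{LE}}^2(\widehat{M}_i^{\lambda,\mu}, M_i) = \big\|\tfrac{n}{\lambda+n}\bar\epsilon_i + \tfrac{\lambda}{\lambda+n}(\widetilde\mu - \widetilde M_i)\big\|^2$ and $\|\widehat{\Sigma}_i^{\Psi,\nu} - \Sigma_i\|^2 = d^{-2}\tr\big((\Psi + S_i - d\,\Sigma_i)^2\big)$ with $d = \nu + n - q - 2$, subtracting these from the matching summands of $\text{SURE}_1$ and $\text{SURE}_2$, and recentering $\tr S_i$, $\|\bar\epsilon_i\|^2$, $\tr(S_i^2)$, $(\tr S_i)^2$, $\tr(S_i\Sigma_i)$ and $\tr(\Psi S_i)$ about their means, one finds — and this is precisely what the SURE coefficients are built to ensure, reflecting $E\,\text{SURE} = R = E\,L$ — that every deterministic term cancels, leaving
\[
\text{SURE}_1 - L_1 = a(\lambda)\,\overline{T}^{(1)}_p + b(\lambda)\,\overline{T}^{(2)}_p + c(\lambda)\big(\overline{T}^{(3)}_p - (p^{-1}\textstyle\sum_i \bar\epsilon_i)^T \widetilde\mu\big),
\]
\[
\text{SURE}_2 - L_2 = a_2(\nu)\,\overline{T}^{(4)}_p + b_2(\nu)\,\overline{T}^{(5)}_p + c_2(\nu)\,\overline{T}^{(6)}_p + d_2(\nu)\,\tr\big(\Psi\, p^{-1}\textstyle\sum_i (S_i - (n-1)\Sigma_i)\big),
\]
where each $\overline{T}^{(k)}_p = p^{-1}\sum_i T^{(k)}_i$ is the average of a centered, row-independent array — $T^{(1)}_i = \tr S_i - (n-1)\tr\Sigma_i$, $T^{(2)}_i = \|\bar\epsilon_i\|^2 - n^{-1}\tr\Sigma_i$, $T^{(3)}_i = \bar\epsilon_i^T\widetilde M_i$, $T^{(4)}_i = \tr S_i^2 - E\tr S_i^2$, $T^{(5)}_i = (\tr S_i)^2 - E(\tr S_i)^2$, $T^{(6)}_i = \tr(S_i\Sigma_i) - (n-1)\tr\Sigma_i^2$ — and, crucially, each of $a,b,c,a_2,b_2,c_2,d_2$ is a scalar function of a \emph{single} hyperparameter that does not depend on the row index $i$. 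The entire reason the supremum over $(\lambda,\Psi,\nu,\mu)$ is harmless is this decoupling of the hyperparameter-dependent weights from $i$.

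Granting this reorganization, the remainder is routine. Over the feasible region ($\lambda > 0$, $\nu > q+1$) all the weights are bounded by a constant $C_n$ depending only on $n$ (and $q$): e.g.\ $|a(\lambda)|, |b(\lambda)|, |c(\lambda)| \le C_n$, and with $\eta := \nu - q - 1 > 0$, $d = \eta + n - 1 > n - 1$, the weights in $\text{SURE}_2 - L_2$ are fixed multiples of ratios such as $(\eta^2 - (n-1)^2)/d^2 = (\eta - n + 1)/d \in (-1,1)$. Next, each $\overline{T}^{(k)}_p \stackrel{\text{prob}}{\longrightarrow} 0$ by Chebyshev: by independence in $i$, $\var(\overline{T}^{(k)}_p) = p^{-2}\sum_i \var(T^{(k)}_i)$, and the Gaussian/Wishart moment identities give $\var(T^{(k)}_i) = O((\tr\Sigma_i)^2)$ for $k \in \{1,2\}$, $O(\widetilde{M}_i^T\Sigma_i\widetilde{M}_i)$ for $k = 3$, and $O((\tr\Sigma_i)^4)$ for $k \in \{4,5,6\}$, so assumptions (i) and (ii) make each of these $O(1/p)$. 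Finally the two bilinear terms are handled by Cauchy--Schwarz together with the constraints defining the supremum: $\|p^{-1}\sum_i\bar\epsilon_i\|$ and $\|p^{-1}\sum_i(S_i - (n-1)\Sigma_i)\|$ are $O_p(p^{-1/2})$ by the same second-moment estimate, while $\|\!\log\mu\| \le \max_i\|\!\log\bar X_i\|$ gives $\|\widetilde\mu\| \le \max_i\|\widetilde M_i\| + \max_i\|\bar\epsilon_i\| = O_p(p^{1/(2+\delta)})$ — using assumption (iii) for the first maximum, and for the second a crude $(2+\delta)$-th moment bound together with assumption (i), noting that by Jensen one may take $\delta \le 6$ — and $\|\Psi\| \le \max_i\|S_i\| = O_p(p^{1/4})$, since $E\max_i\|S_i\|^4 \le \sum_i E(\tr S_i)^4 = O(\sum_i(\tr\Sigma_i)^4) = O(p)$. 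Hence the two bilinear terms are $O_p(p^{1/(2+\delta)-1/2})$ and $O_p(p^{-1/4})$, both $o_p(1)$. Assembling, $\sup|\text{SURE}_k - L_k| \le C_n\big(\sum_{k'}|\overline{T}^{(k')}_p| + o_p(1)\big) \stackrel{\text{prob}}{\longrightarrow} 0$ for $k = 1,2$, which is the theorem.

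The main obstacle I expect is the first step: carrying out the expansion of $\text{SURE} - L$ carefully enough to expose both the deterministic cancellations and the row-independence of the weights. Without that structure the supremum would force one to bound $p^{-1}\sum_i|T^{(k)}_i|$, whose expectation is only $O(1)$ rather than $o(1)$, and the conclusion would fail. Secondary, more mechanical difficulties are the moment algebra for the Wishart matrices $S_i$ (the variances of $\tr S_i^2$ and $(\tr S_i)^2$ require up to eighth-order Gaussian moments) and the maximal inequalities controlling $\max_i\|S_i\|$ and $\max_i\|\widetilde{\bar X}_i\|$; these are standard but use the exact exponents in assumptions (i)--(iii).
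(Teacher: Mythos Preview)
Your proposal is correct and follows essentially the same route as the paper: split $L$ and $\text{SURE}$ into the $\bd M$-part and $\bd\Sigma$-part, expand and recenter so that the deterministic pieces cancel, bound the hyperparameter-dependent scalar weights uniformly over the feasible region, and then control the centered averages by second-moment (Chebyshev/Markov) bounds and the two bilinear terms by Cauchy--Schwarz together with maximal inequalities for $\max_i\|\widetilde{\bar X}_i\|$ and $\max_i\|S_i\|$. The only differences are organizational: the paper groups $\text{SURE}_2-L_2$ into five pieces rather than your four and proves two dedicated maximal lemmas (extending a lemma of Xie et al.) giving $E\max_i\|\log\bar X_i\|^2=O(p^{2/(2+\delta^*)})$ and $E\max_i\|S_i\|^2=O(p^{1/2}(\log p)^2)$, whereas your cruder moment bounds $E\max_i\|S_i\|^4\le\sum_iE(\tr S_i)^4=O(p)$ and $E\max_i\|\bar\epsilon_i\|^{2+\delta}=O(p)$ reach the same $o_p(1)$ conclusion with less work.
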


Note that the optimization has some constraints. However, in practice, with proper initialization as suggested earlier, the constraints on $\Psi$ and $\mu$ can be safely ignored. The reason is that, for $\Psi$ and $\mu$ far from $S_i$'s and $\bar{X}_i$ respectively, the value of SURE will be large. The constraints on $\lambda$ and $\nu$ can easily be handled by standard constrained optimization algorithms, e.g.\ L-BFGS-B \citep{byrd1995limited}.

\begin{theorem}\label{thm4}
    If assumptions (i), (ii), and (iii) in Theorem~\ref{thm3} hold, then
    \begin{align*}
        \lim_{p\to\infty} \Bigg[ R\Big(\big( \widehat{\bd{M}}^{\text{SURE}},
        \widehat{\bd{\Sigma}}^{\text{SURE}} \big), (\bd{M},
        \bd{\Sigma})\Big) - R\Big(\big(
        \widehat{\bd{M}}^{\lambda,\mu}, \widehat{\bd{\Sigma}}^{\Psi,
        \nu} \big), (\bd{M}, \bd{\Sigma})\Big)\Bigg] \leq 0.       
    \end{align*}
\end{theorem}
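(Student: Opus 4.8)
The plan is to deduce Theorem~\ref{thm4} from Theorem~\ref{thm3} by the standard SURE-optimality argument (cf.\ \citet{xie2012sure}): the uniform approximation in Theorem~\ref{thm3} turns the defining inequality of the SURE minimizer into a pointwise (in the data) comparison of losses, and a uniform-integrability step upgrades this to the comparison of risks. Write $\eta=(\lambda,\Psi,\nu,\mu)$ for a generic hyperparameter vector, let $\mathcal{C}_p=\{\lambda>0,\ \nu>q+1,\ \|\Psi\|\le\max_i\|S_i\|,\ \|\!\log\mu\|\le\max_i\|\!\log\bar X_i\|\}$ be the (random) constraint set, let $\hat\eta^{\text{SURE}}$ minimize $\text{SURE}(\eta)$ over $\mathcal{C}_p$, and let
\[
D_p \;=\; \sup_{\eta\in\mathcal{C}_p}\Big|\text{SURE}(\eta)-L\big((\widehat{\bd M}^{\lambda,\mu},\widehat{\bd\Sigma}^{\Psi,\nu}),(\bd M,\bd\Sigma)\big)\Big| ,
\]
so that Theorem~\ref{thm3} is the statement $D_p\stackrel{\text{prob}}{\longrightarrow}0$, and the risks in Theorem~\ref{thm4} are the data-expectations of the corresponding losses.

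First I would record the pointwise chain of inequalities. Fix an admissible comparison point $\eta_0=(\lambda,\Psi,\nu,\mu)$; since the random radii $\max_i\|S_i\|$ and $\max_i\|\!\log\bar X_i\|$ do not degenerate to $0$, a fixed $\eta_0$ lies in $\mathcal{C}_p$ with probability tending to one, and on that event
\[
L\big(\hat\eta^{\text{SURE}}\big)\;\le\;\text{SURE}\big(\hat\eta^{\text{SURE}}\big)+D_p\;\le\;\text{SURE}(\eta_0)+D_p\;\le\;L(\eta_0)+2D_p ,
\]
where the middle step is the defining property of $\hat\eta^{\text{SURE}}$ and the outer two use $|\text{SURE}-L|\le D_p$ on $\mathcal{C}_p$. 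This already yields the pointwise optimality $L(\hat\eta^{\text{SURE}})-L(\eta_0)\le o_p(1)$; the contribution from the event $\{\eta_0\notin\mathcal{C}_p\}$ is asymptotically negligible and is handled as in the proof of Theorem~\ref{thm2}.

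Next I would pass from losses to risks. Taking expectations over the data in the displayed inequality and using $R=E[L]$ gives that the bracketed difference in Theorem~\ref{thm4} is at most $2E[D_p]+o(1)$, so it suffices to show $E[D_p]\to0$, i.e.\ to upgrade Theorem~\ref{thm3} from convergence in probability to convergence in $L^1$. Since $D_p\ge0$ this is exactly uniform integrability of $\{D_p\}_{p\ge1}$. I would obtain it by noting that $D_p$ is dominated by $p^{-1}\sum_i$ of fixed polynomial expressions in $\tr S_i$, $\tr S_i^2$, $\tr(\Psi S_i)$, $d_{\text{LE}}^2(\bar X_i,\mu)$, $\widetilde M_i^{T}\Sigma_i\widetilde M_i$ and $(\tr\Sigma_i)^2$, where the terms linear in $\Psi$ and in $\log\mu$ appear only through centered quantities such as $p^{-1}\sum_i\langle\Psi, S_i/(n-1)-\Sigma_i\rangle$ (the ``SURE minus true risk'' structure). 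One then bounds $E[D_p]$ by Cauchy--Schwarz: the centered averages are $O_{L^2}(p^{-1/2})$ by independence across $i$, while the suprema over $\eta\in\mathcal{C}_p$ contribute factors like $\max_i\|S_i\|$ and $\max_i\|\!\log\bar X_i\|$, whose low-order moments are controlled by $\sum_i E\|S_i\|^a\lesssim\sum_i(\tr\Sigma_i)^a$ and analogous bounds, which remain of smaller order than $p^{1/2}$ precisely under assumptions (i)--(iii) (condition~(i) with the fourth power is what caps $\max_i\tr\Sigma_i$ at order $p^{1/4}$; (ii) controls the $\widetilde M_i^{T}\Sigma_i\widetilde M_i$ cross terms; (iii) controls $p^{-1}\sum_i\|\!\log\bar X_i\|^2$). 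Here one uses the Gaussian law $\widetilde{\bar X}_i\sim N(\widetilde M_i,\Sigma_i/n)$ and the Wishart-type moment formulas for $S_i$ under model~\eqref{eqn:model}. In particular, if the supplement's proof of Theorem~\ref{thm3} already proceeds by bounding $E[D_p^2]$ or $E[D_p]$ (as such arguments typically do), then $E[D_p]\to0$ is immediate and Theorem~\ref{thm4} follows with essentially no extra work; otherwise this moment bound is the one additional ingredient.

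I expect this uniform-integrability / moment step to be the main obstacle: one must verify that all the relevant averages of functionals of $S_i$ and $\bar X_i$ have moments bounded uniformly in $p$ \emph{and} uniformly over the random set $\mathcal{C}_p$ --- in particular over $\Psi$ with $\|\Psi\|\le\max_i\|S_i\|$ and $\mu$ with $\|\!\log\mu\|\le\max_i\|\!\log\bar X_i\|$ --- so that the products (maximal term)$\times$(centered average) vanish in expectation; this is where the boundedness hypotheses (i)--(iii) and the exact Gaussian/Wishart moment identities must be used with care, and it is the reason condition~(i) is imposed with a fourth power rather than a second. The remaining ingredients --- the chain of inequalities and the handling of $\{\eta_0\notin\mathcal{C}_p\}$ --- are routine and mirror the proof of Theorem~\ref{thm2}.
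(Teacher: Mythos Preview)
Your proposal is correct and follows essentially the same route as the paper: decompose $L(\hat\eta^{\text{SURE}})-L(\eta_0)$ through $\text{SURE}(\hat\eta^{\text{SURE}})$ and $\text{SURE}(\eta_0)$, use the minimizing property to drop the middle term, bound the outer two by $2D_p$, and then pass to expectations via Theorem~\ref{thm3}. The paper's proof is literally this three-line telescoping argument followed by ``by dominated convergence.''

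The one point on which you are more careful than the paper is the passage from $D_p\stackrel{\text{prob}}{\to}0$ to $E[D_p]\to0$. The paper simply writes ``by dominated convergence'' without exhibiting a dominating function, whereas you correctly flag this as a uniform-integrability issue and observe that the supplement's proof of Theorem~\ref{thm3} already controls each piece of $D_p$ in $L^1$ (or $L^2$): the terms handled by ``variance~$\to0$ and Markov'' are in fact $L^2$-small, and the terms involving $\max_i\|S_i\|$ or $\max_i\|\!\log\bar X_i\|$ are shown directly to have expectation tending to zero via Cauchy--Schwarz together with Lemmas~\ref{lem:LN} and~\ref{lem:S_bound}. So your anticipated ``extra ingredient'' is indeed already present in the proof of Theorem~\ref{thm3}, and your reading of it is accurate. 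Your remark about the event $\{\eta_0\notin\mathcal{C}_p\}$ is also a detail the paper does not make explicit; it is harmless for the reason you give.
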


Note that in all the above theorems, we consider the asymptotic regime $p \to \infty$ while the size of the SPD matrix $N$ is held fixed. The main reason for fixing the size of the SPD matrix is that in our application, namely the DTI analysis, the size of the diffusion tensors is always $3 \times 3$, because the diffusion magnetic resonance images are 3-dimensional images. However, the number of voxels $p$ can increase as they are determined by the resolution of the acquired image which can increase due to advances in medical imaging technology. This is different from the usual high dimensional covariance matrix estimation problem in which the size of the covariance matrix is allowed to grow.

{\bf Remark:} The proofs of Theorems~\ref{thm1} and \ref{thm2} in \cite{yang2019shrinkage} use arguments similar to those that already exist in the literature, and in that sense they are not very difficult. In contrast, the proofs of our Theorems~\ref{thm3} and \ref{thm4} above do not proceed along familiar lines. Indeed, they are rather complicated, the difficulty being that bounding the moments of Wishart matrices or the moments of trace of Wishart matrices is nontrivial when the orders of the required moments are higher than two.  We present these proofs in the supplementary document.

\subsection{Tweedie's Formula for \texorpdfstring{$F$}{Lg} Statistics}\label{sec:tweedie}

One of the motivations for the development of our approach for estimating $\boldsymbol{M}$ and $\boldsymbol{\Sigma}$ is a problem in neuroimaging involving detection of differences between a patient group and a control group. The problem can be stated as follows. There are $n_x$ patients in a disease group and $n_y$ normal subjects in a control group. We consider a region of the brain image consisting of $p$ voxels.  As explained in section~\ref{sec:real_data}, the local diffusional property of water molecules in the human brain is of clinical importance and it is common to capture this diffusional property at each voxel in the diffusion magnetic resonance image (dMRI) via a zero-mean Gaussian with a $3 \times 3$ covariance matrix. Using any of the existing state-of-the-art dMRI analysis techniques, it is possible to estimate, from each patient image, the diffusion tensor $M_i$ corresponding to voxel $i$, for $i = 1, \ldots, p$.  Let $M^{(1)}_i$ and $M^{(2)}_i$ denote the diffusion tensors corresponding to voxel $i$ for the disease and control groups respectively.  The goal is to identify the indices $i$ for which the difference between $M^{(1)}_i$ and $M^{(2)}_i$ is large.  The model we consider is
\begin{align*}
X_{ij} & \ind \text{LN}(M^{(1)}_i, \Sigma_i), \quad i = 1,\ldots,p, \; j =
1,\ldots, n_x,\\ 
Y_{ij} & \ind \text{LN}(M^{(2)}_i, \Sigma_i), \quad i = 1,\ldots,p, \; j =
1,\ldots, n_y.
\end{align*}

In this work, we compute the Hotelling $T^2$ statistic for each $i = 1,\ldots,p$ as a measure of the difference between $M^{(1)}_i$ and $M^{(2)}_i$. The Hotelling $T^2$ statistic for SPD matrices has been proposed by \citet{schwartzman2010group} and is given by
\begin{equation} \label{eqn:T2_stat}
t_i^{2}=\Big(\widetilde{\bar{X}_{i}}-\widetilde{\bar{Y}_{i}}\Big)^{T}\Big[\Big(\frac{1}{n_x}+\frac{1}{n_y}\Big)S_i\Big]^{-1}\Big(\widetilde{\bar{X}_{i}}-\widetilde{\bar{Y}_{i}}\Big)
\end{equation}
where $\bar{X}_i$ and $\bar{Y}_i$ are the FMs of $\{X_{ij}\}_j$ and $\{Y_{ij}\}_j$ and $S_i = (n_x+n_y-2)^{-1}\big(S^{(1)}_i + S^{(2)}_i\big)$ is the pooled estimate of $\Sigma_i$ where $S^{(1)}_i$ and $S^{(2)}_i$ are computed using~\eqref{eqn:FM_and_cov}. Since the $X_{ij}$'s and $Y_{ij}$'s are Log-normally distributed, one can easily verify that the sampling distribution of $t^2_i$ is given by
\[
\frac{\nu - q - 1}{\nu q}t_i^{2} \ind F_{q,\nu-q-1, \lambda_i}
\]
where $\nu = n_x + n_y - 2$ is a degrees of freedom parameter (recall that $q=N(N+1)/2$). Note that we make the
assumption that the covariance matrices for the two groups are the same, i.e.\
$\Sigma_i^{(1)}=\Sigma_i^{(2)}=\Sigma_i$. Similar results can be obtained for
the unequal covariance case, but with more complicated expressions for the $T^2$ statistics and the degrees of freedom parameters. The $\lambda_i$'s are the non-centrality parameters for the non-central $F$ distribution and are given by 
\[
\lambda_i = \Big(\frac{1}{n_x} + \frac{1}{n_y}\Big)\big(\widetilde{M}^{(1)}_i - \widetilde{M}^{(2)}_i\big)^T\Sigma_i^{-1}\big(\widetilde{M}^{(1)}_i - \widetilde{M}^{(2)}_i\big).
\]
These non-centrality parameters can be interpreted as the (squared) differences between $M^{(1)}_i$ and $M^{(2)}_i$, and they are the parameters we would like to estimate using the statistics~\eqref{eqn:T2_stat} computed from the data. Then, based on the estimates $\hat{\lambda}_i$, we select those $i$'s with large estimates, say the largest $1\%$ of all $\hat{\lambda}_i$. However, the process of selection from the computed estimates introduces a \emph{selection bias} \citep{dawid1994selection}. The selection bias comes from the fact that it is possible to select some indices $i$'s for which the actual $\lambda_i$'s are not large but the random errors are large, so that the estimates $\hat{\lambda}_i$ are pushed away from the true parameters $\lambda_i$. There are several ways to correct this selection bias, and \cite{efron2011tweedie} proposed to use \emph{Tweedie's formula} for such a purpose. 

Tweedie's formula was first proposed by \cite{robbins1956}, and we review this formula here in the context of the classical normal means problem, which is stated as follows. We observe $Z_i \ind N(\mu_i, \sigma^2)$, $i=1,\ldots,p$, where the $\mu_i$'s are unknown and $\sigma^2$ is known, and the goal is to estimate the $\mu_i$'s. In the empirical Bayes approach to this problem we assume that $\mu_i$'s are iid according to some distribution $G$. The marginal density of the $Z_i$'s is then $f(z)=\int\phi_{\sigma}(z-\mu)\, dG(\mu)$, where $\phi_{\sigma}$ is the density of the $N(0, \sigma^2)$ distribution. With this notation, if $G$ is known (so that $f$ is known), the best estimator of $\mu_i$ (under squared error loss) is the so-called Tweedie estimator given by
\[
\hat{\mu}_i = Z_i + \sigma^2\frac{f^{\prime}(Z_i)}{f(Z_i)}.
\]
A feature of this estimator is that it depends on $G$ only through $f$, and this is desirable because it is fairly easy to estimate $f$ from the $Z_i$'s (so we don't need to specify $G$). Another interesting observation about this estimator is that $\hat{\mu}_i$ is shrinking the MLE $\hat{\mu}_i^{\text{MLE}} = Z_i$ and can be viewed as a generalization of the James-Stein estimator, which assumes $\mu_i \iid N(0, \lambda)$ with unknown $\lambda$. The Tweedie estimator can be generalized to exponential families. Suppose that $Z_i|\eta_i \ind f_{\eta_i}(z) = \exp(\eta_iz-\phi(\eta_i))f_0(z)$, and the prior for $\phi$ is $G$. Then the Tweedie estimator for $\eta_i$ is 
\[
\hat{\eta}_i = l^{\prime}(Z_i)-l^{\prime}_0(Z_i),
\]
where $l(z) = \log\int f_{\eta}(z)\, dG(\eta)$ is the log of the marginal likelihood of the $Z_i$'s and $l_0(z)=\log f_0(z)$.

Although this formula is elegant and useful, it applies only to exponential families. Recently, \cite{du2020empirical} derived a Tweedie-type formula for non-central $\chi^{2}$ statistics, for situations where one is interested in estimating the non-centrality parameters. Suppose $Z_i|\lambda_i \ind \chi_{\nu,\lambda_i}^{2}$ and $\lambda_i\iid G$. Then, 
\begin{equation} \label{eqn:tweedie_ncChi}
E(\lambda_i|Z_i)=\Big[(Z_i-\nu+4)+2Z_i\Big(\frac{2l_{\nu}^{\prime\prime}(Z_i)}{1+2l_{\nu}^{\prime}(Z_i)}+l_{\nu}^{\prime}(Z_i)\Big)\Big]\big(1+2l_{\nu}^{\prime}(Z_i)\big),
\end{equation}
where $l_{\nu}(\cdot)$ is the marginal log-likelihood of the $Z_i$'s (see Theorem 1 in \cite{du2020empirical}).

For our situation, if we define $Z_i = [(\nu-q-1)/\nu q]t_i^2$, then 
\[
Z_i|\lambda_i \ind F_{q,\nu - q - 1, \lambda_i}
\] 
(recall that $\nu=n_x + n_y - 2$). Assume that the $\lambda_i$'s are iid according to some distribution $G$. We would now like to address the problem of how to obtain an empirical Bayes estimate of $\lambda_i$. Let $\Phi_{\nu_1, \nu_2, \lambda}$ be the cumulative distribution function (cdf) of the non-central $F$ distribution, $F_{\nu_1,\nu_2,\lambda}$, and let $\Tilde{\Phi}_{\nu, \lambda}$ be the cdf of the non-central $\chi^2$ distribution, $\chi^2_{\nu, \lambda}$. Then the transformed variable $Y_i = \Tilde{\Phi}^{-1}_{\nu_1,\lambda_i}(\Phi_{\nu_1,\nu_2,\lambda_i}(Z_i))$ follows a non-central $\chi^2$ distribution with degrees of freedom parameter $\nu_1$ and non-centrality parameter $\lambda_i$, and we note that when $\nu_2$ is large, $\Phi_{\nu_1,\nu_2,\lambda_i}$ and $\tilde{\Phi}_{\nu_1,\lambda_i}$ are nearly equal, so that this quantile transformation is nearly the identity. However, the transformation depends on $\lambda_i$, which is the parameter to be estimated, so we propose the following iterative algorithm for estimating $E(\lambda_i|Z_i)$. Let $\lambda_i^{(t)}$ be the estimate of $\lambda_i$ at the $t$-th iteration. Then, our iterative update of $\lambda_i$ is given by 
\begin{equation*}
\lambda_i^{(t+1)}=\Big[(Y_i^{(t)}-\nu_1+4)+2Y_i^{(t)}\Big(\frac{2l_{\nu_1}^{\prime\prime}(Y_i^{(t)})}{1+2l_{\nu_1}^{\prime}(Y_i^{(t)})}+l_{\nu_1}^{\prime}(Y_i^{(t)})\Big)\Big]\big(1+2l_{\nu_1}^{\prime}(Y_i^{(t)})\big),
\end{equation*}
where $Y_i^{(t)} = \tilde{\Phi}^{-1}_{\nu_1,\lambda_i^{(t)}} \big(\Phi_{\nu_1,\nu_2,\lambda_i^{(t)}}(Z_i)\big)$, $\nu_1=q$, and $\nu_2=\nu-q-1$. Now the marginal log-likelihood $l_{\nu_1}(y)$ is not available since the prior $G$ for $\lambda_i$ is unknown.  There are several ways to estimate the marginal density of the $Y_i^{(t)}$'s. One of these is through kernel density estimation.  However, the iterative formula involves the first and second derivatives of the marginal log-likelihood, and estimates of the derivatives of a density produced through kernel methods are notoriously unstable (see Chapter 3 of \cite{silverman1986density}).  There exist different approaches to deal with this problem (see \citet{sasaki2016direct} and \citet{shen2017posterior}).  Here we follow \citet{efron2011tweedie} and postulate that $l_{\nu_1}$ is well approximated by a polynomial of degree $K$, and write $l_{\nu_1}(y) = \sum_{k=0}^K \beta_k y^k$.  The coefficients $\beta_k$, $k=1,\ldots,K$, can be estimated via \emph{Lindsey's method} \citep{efron1996using}, which is a Poisson regression technique for (parametric) density estimation; the coefficient $\beta_0$ is determined by the requirement that $f_{\nu_1}(y) =\exp(l_{\nu_1}(y))$ integrates to $1$. The advantage of Lindsey's method over methods that use kernel density estimation is that it does not require us to estimate the derivatives separately, since $l^{\prime}_{\nu_1}(y)=\sum_{k=1}^Kk\beta_k y^{k-1}$ and $l^{\prime\prime}_{\nu_1}(y)=\sum_{k=2}^Kk(k-1)\beta_k y^{k-2}$. In our experience, with $l_{\nu_1}'$ and $l_{\nu_1}''$ estimated in this way, if we initialize the scheme by setting $\lambda_i^{(0)}$ to be the estimate of $\lambda_i$ given by \cite{du2020empirical} procedure, then the algorithm converges in less than 10 iterations.

\section{Experimental Results}\label{exp}

In this section, we describe the performance of our methods on two synthetic
data sets and on two sets of real data  acquired using diffusion magnetic
resonance brain scans of normal (control) subjects and patients with Parkinson's
disease. For the synthetic data experiments, we show that (i) the proposed
shrinkage estimator for the FM (SURE.Full-FM; \emph{with} simultaneous
estimation of the covariance matrices) outperforms the sample FM (FM.LE) and the
shrinkage estimator proposed by \cite{yang2019shrinkage} (SURE-FM; \emph{with
fixed} covariance matrices) (section \ref{sec:synth1}); (ii) the shrinkage
estimates of the differences capture the regions that are significantly
different between two groups of SPD matrix-valued images (section~\ref{sec:synth2}). For the real data experiments, we demonstrate that (iii) the
SURE.Full-FM provides improvement over the two competing estimators (FM.LE, and SURE-FM) for computing an atlas (templates) of diffusion tensor
images acquired from human brains (section~\ref{sec:real1}); (iv) the
proposed shrinkage estimator for detecting group differences is able to identify
the regions that are different between patients with Parkinson's disease and control subjects
(section~\ref{sec:real2}). Details of these experiments will be presented
subsequently.  

\subsection{Synthetic Data Experiments} \label{sec:synthetic}

We present two synthetic data experiments here to show that the proposed
shrinkage estimator, SURE.Full-FM, outperforms the sample FM and SURE-FM and
that the shrinkage estimates of the group differences can accurately localize
the regions that are significantly different between the two groups.

\subsubsection{Comparison Between SURE.Full-FM and Competing Estimators} \label{sec:synth1}

Using generated noisy SPD fields ($P_3$) as data, here we present performance
comparisons of four estimators of $\bd{M}$: (i) SURE.Full-FM, which is the
proposed shrinkage estimator, (ii) SURE-FM proposed by \cite{yang2019shrinkage}
which assumes known covariance matrices and (iii) the MLE, which is denoted by
FM.LE, since by proposition~\ref{prop:MLE} it is the FM based on the
Log-Euclidean metric. The synthetic data are generated according to \eqref{eqn:model}.
Specifically, we set $\mu = I_3$, $\Psi = I_6$, and $n=10$, and we vary the
variance $\lambda$ and the degree of freedom $\nu$ of the prior distribution as
follows: $\lambda=10, 50$, and $\nu=15, 30$. Figure~\ref{fig:risk} depicts the
relationship between the average loss (averaged over $m=1000$ replications) and
the dimension $p$ under varying conditions for the four estimators. Note that
since the covariance matrices $\Sigma_i$'s are unknown in our synthetic
experiment and $(n-1)^{-1}S_i$ is an unbiased estimate for $\Sigma_i$, the
$A_i$'s in~\eqref{eqn:SURE_est} can be unbiasedly estimated by $[(n-1)q]^{-1}\tr
S_i$.

\begin{figure}[ht!]
\centering
%\vspace{-1cm}
\includegraphics[scale=0.9]{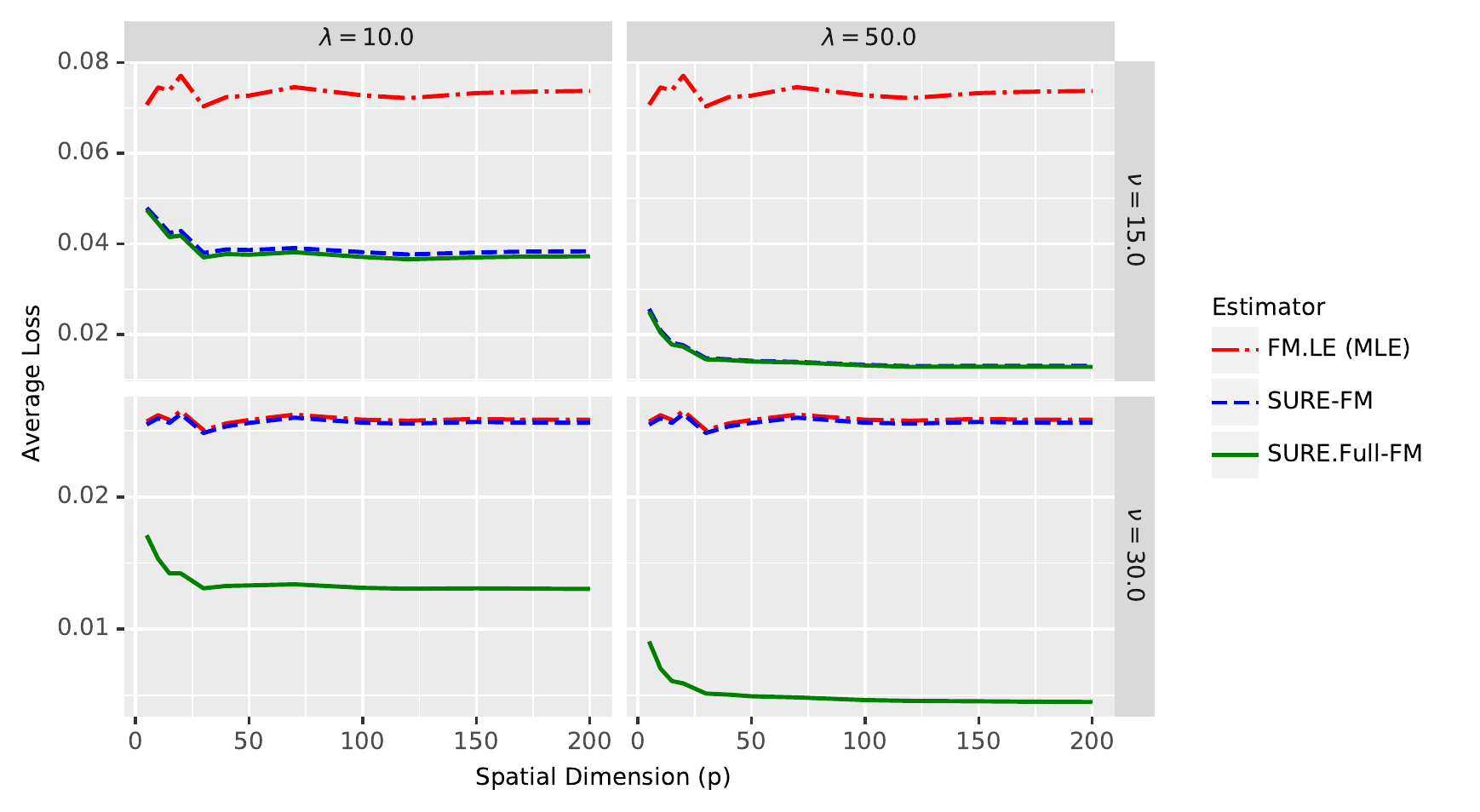}
\caption{Average loss for the three
estimators. Results for varying $\lambda$ and degree of freedom $\nu$ are shown
across the columns and rows, respectively. Note that in the bottom two panels,
the line corresponding to FM.LE is essentially the same as the SURE.FM, but is barely visible.}
\label{fig:risk}
\end{figure}

As is evident from Figure~\ref{fig:risk}, for large $\lambda$ the gains from
using SURE.Full-FM are greater. This observation is in accordance with our
intuition, which is that for large $\lambda$, the $M_i$'s are clustered, and it
is beneficial to shrink the MLEs of the $M_i$'s towards a common value. The main
difference between SURE-FM and SURE.Full-FM is that the former requires
knowledge of the $\Sigma_i$'s and in general such information is not available,
and estimates for the $\Sigma_i$'s are needed to compute the SURE-FM. Hence the
performance of SURE-FM depends heavily on how good the estimates for the
$\Sigma_i$'s are. In our synthetic data experiment,  we consider the unbiased
estimate $\hat{A}_i = [(n-1)q]^{-1}\tr S_i$ for SURE-FM. In this case, the prior
mean for $\Sigma_i$ is $E(\Sigma_i) = (\nu-q-1)^{-1}I_q$ for which the
assumption $\Sigma_i = A_iI$ seems reasonable. For large $\nu$, the
$\hat{A}_i$'s are closer to zero, which results in a smaller shrinkage effect
(this can be observed in Figure~\ref{fig:risk}, where we see that SURE-FM is
almost identical to FM.LE for $\nu=30$). \emph{Note that even if the assumption
$\Sigma_i=A_iI$ is not severely violated, our estimator still outperforms
SURE-FM by a large margin.}

On the other hand, we can fix $\lambda$ and $\nu$ to see how different choices
of $\mu$ and $\Psi$ affect the performance of our shrinkage estimator
SURE.Full-FM. To do this, we fix $n=10$, $\lambda=10$, and $\nu=15$ (so that we
can compare with the top-left panel of Figure~\ref{fig:risk}). We consider $\mu
= \text{diag}(2,0.5,0.5)$ and $\Psi_{ij} = 0.5^{|i-j|}$. The result is shown in
Figure~\ref{fig:risk2}. The top-left panel of Figure~\ref{fig:risk} shows that
when $\mu=I$ and $\Psi=I$, there is no difference between SURE-FM and
SURE.Full-FM, but Figure~\ref{fig:risk2} shows that when one of $\mu$ and $\Psi$
is not identity, our shrinkage estimator outperforms SURE-FM. For different
choices of $\lambda$ and $\nu$, the improvement will be more significant,
following the trend we observed in Figure~\ref{fig:risk}.

\begin{figure}[ht!]
\centering
%\vspace{-1cm}
\includegraphics[scale=0.9]{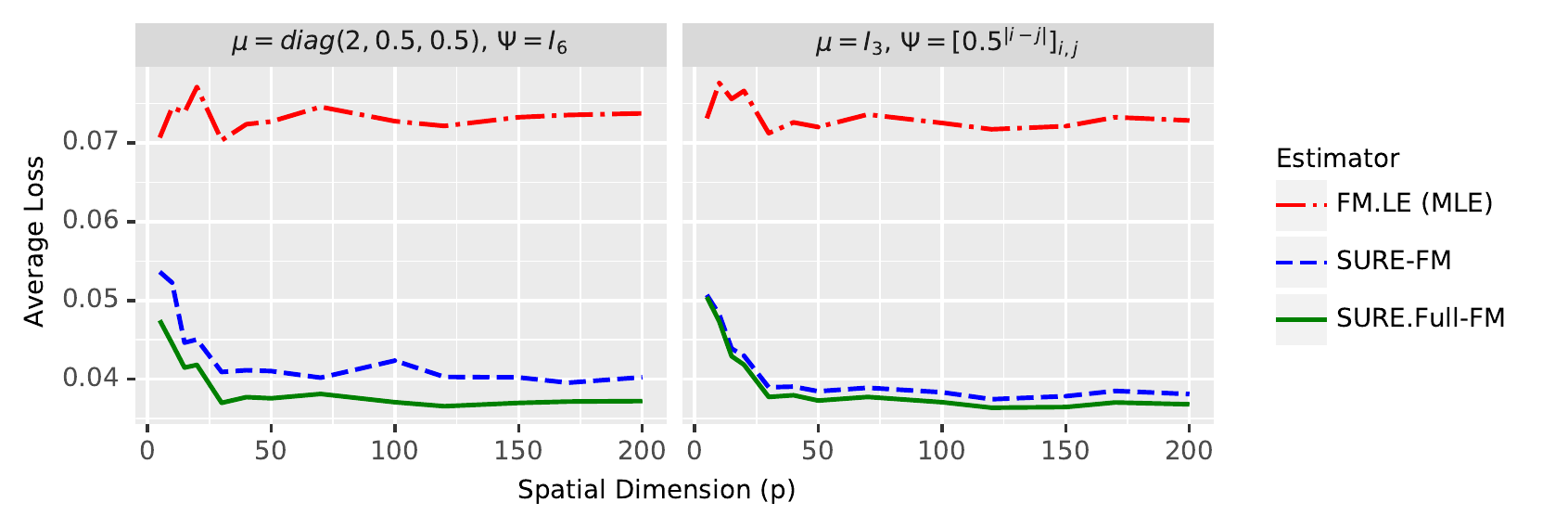}
\caption{Average losses for the four estimators. The left panel assumes $\mu =
    \text{diag}(2,0.5,0.5)$ and $\Psi = I$ and the right panel assumes $\mu = I$ and
    $\Psi_{ij} = 0.5^{|i-j|}$.}
\label{fig:risk2}
\end{figure}
 
\subsubsection{Differences Between Two Groups of SPD-Valued Images} \label{sec:synth2}

In this subsection, we demonstrate the method proposed in section
\ref{sec:tweedie} for evaluating the difference between two groups of SPD-valued
images. For this synthetic data experiment, we use $P_2$, the manifold of
$2\times 2$ SPD matrices, since it is easy to visualize these matrices. For the
visualization, we represent each $2 \times 2$ SPD matrix of the SPD-valued image
by an ellipse with the two eigenvectors as the axes of the ellipse  and the two
eigenvalues as the width and height along the corresponding axes respectively.
The data are generated as follows. Given $n_k$, $M^{(k)}_{i}$, $\sigma^2_i$,
$k=1,2$, $i=1,\ldots,p$, generate
\begin{align*}
    X_{ij} & \ind \text{LN}(M^{(1)}_{i}, \sigma^2_iI),\quad j=1,\ldots,n_1,\\
    Y_{ij} & \ind \text{LN}(M^{(2)}_{i}, \sigma^2_iI),\quad j=1,\ldots,n_2.
\end{align*}
We generate $n_1=n_2=30$ $P_2$-valued images for the two groups, and the size of
each $P_2$-valued image is $20 \times 20$, which gives $p = 20\times20=400$. For
the variances $\sigma_j$, we consider a low variance scenario $\sigma_i \iid
U(0.1,0.3)$ and a high variance scenario $\sigma_i \iid U(0.3,0.8)$. The means
$M^{(k)}_i$ are depicted visually in Figure~\ref{fig:group_mean} (in the form of
images with ellipses instead of gray values at each pixel), and the region in
which the means are different is the top-right corner, containing a quarter of
the pixels; this is the `ground truth' data. 

\begin{figure}[ht!]
\centering
\subfigure[Group 1]{
  \includegraphics[width=0.4\textwidth]{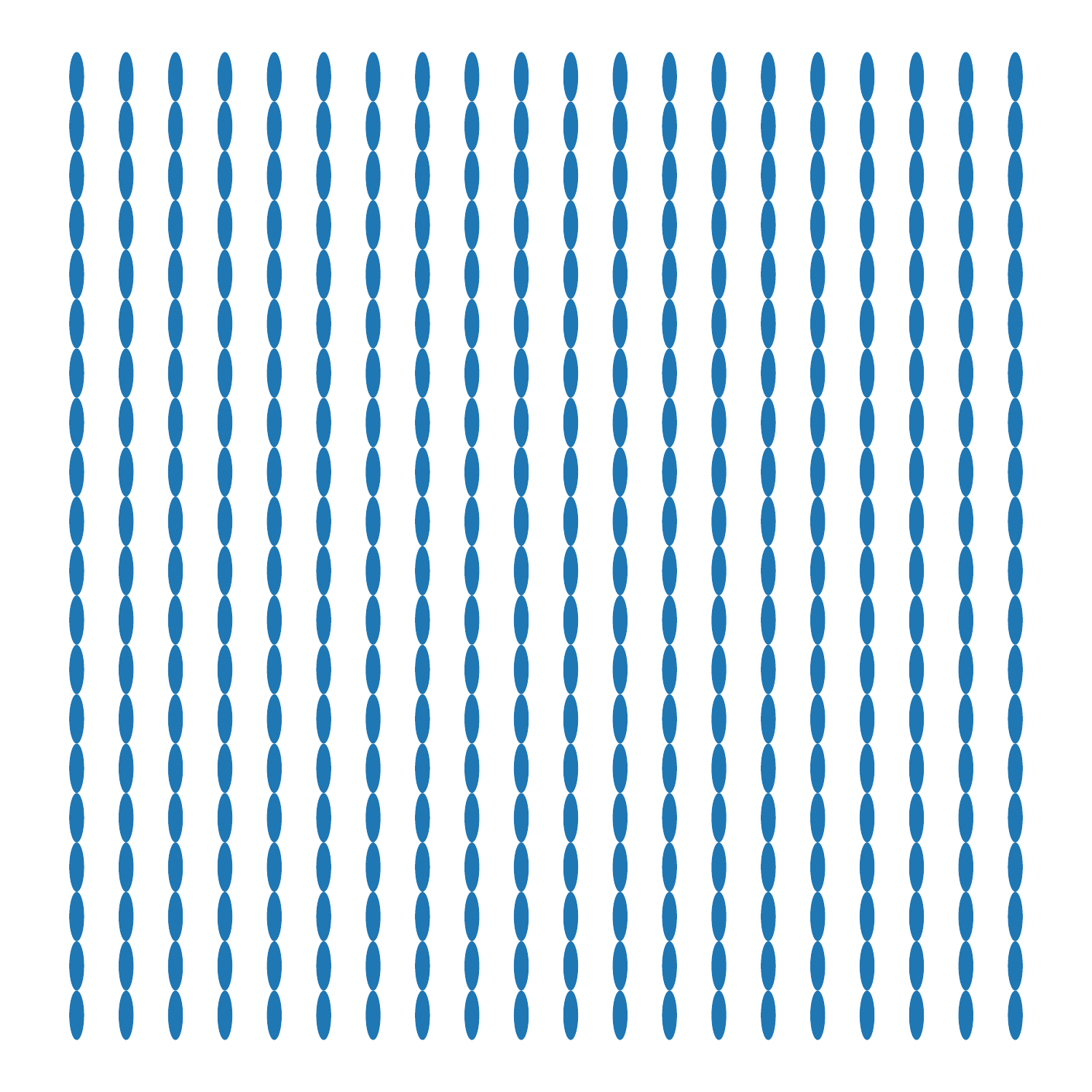}
}
\hfill
\subfigure[Group 2]
{\includegraphics[width=0.4\textwidth]{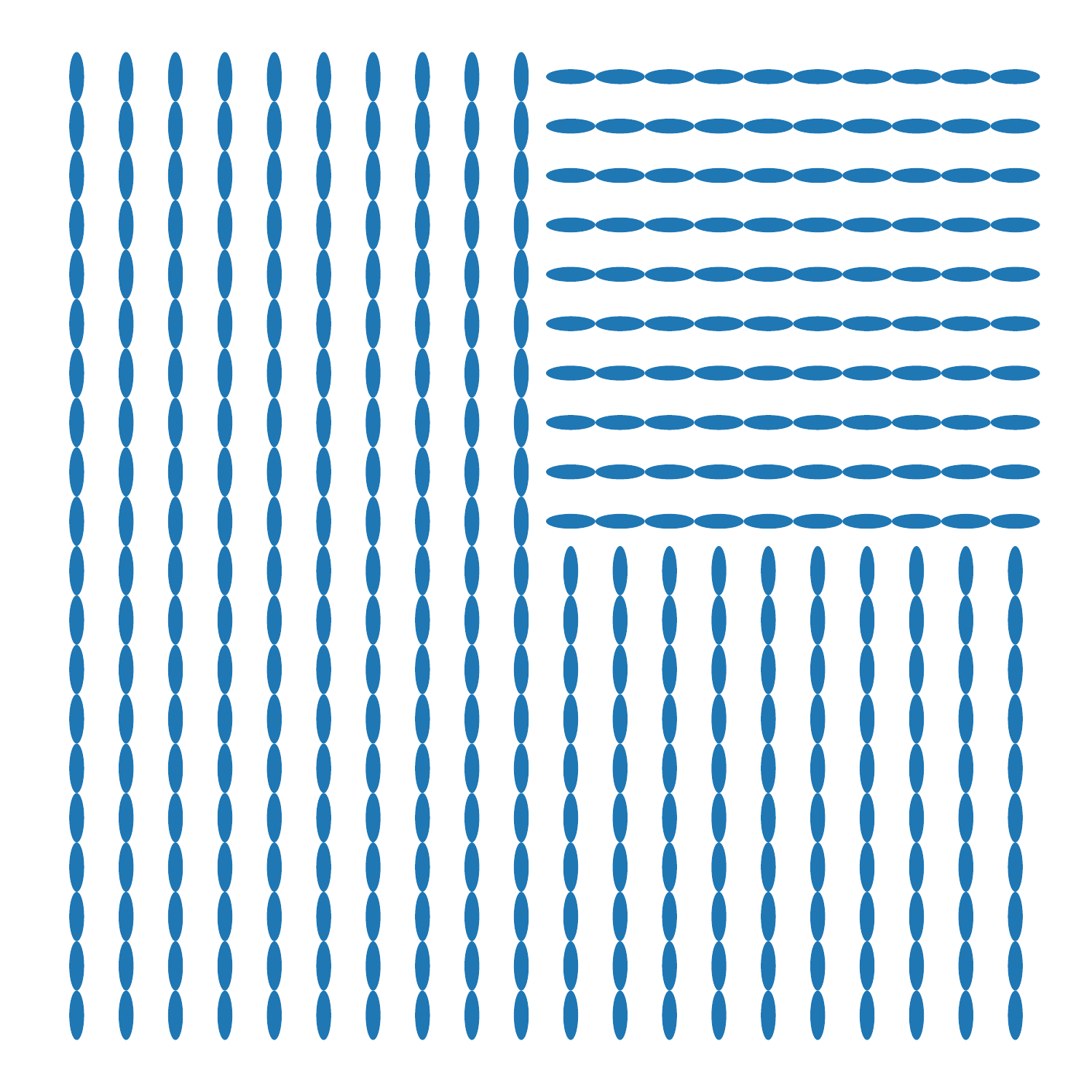}}
\caption{The mean $P_2$-valued images $M^{(k)}_i$, $k=1,2$, used to generate random  $P_2$-valued images for the two groups. The vertical ellipse represents the matrix $\diag(0.3, 1)$ and the horizontal ellipse represents the matrix $\diag(1,0.3)$.}
\label{fig:group_mean}
\end{figure}

As described in section \ref{sec:tweedie}, we first compute the Hotelling $T^2$
statistic from $\{X_{ij}\}_{j=1}^{n_1}$ and $\{Y_{ij}\}_{j=1}^{n_2}$ for each
$i$ and transform each of them to the $F$ statistic. Now we have $p$ non-central
$F$ statistics, $f_i \ind F_{\nu_1,\nu_2,\lambda_i}$, $i=1,\ldots,p$, where
$\nu_1=q=3$, and $\nu_2=n_1+n_2-2-q-1=n_1+n_2-6$. With the resulting $F$
statistics, we can apply the algorithm described in section~\ref{sec:tweedie} to
estimate the non-centrality parameters (at each location), and for the
estimation of the marginal log likelihood, we adopt Lindsey's method to fit a
polynomial of degree $K=5$ to the log-likelihood $l_{\nu_1}$. We have
experimented using different values of $K$, and we found that the results are
robust to changes in $K$, at least for relatively small $K$.  In our
experiments, we set $n_1=n_2=30$.  As we can see from
Figure~\ref{fig:group_mean}, we expect the method to yield large values on the
top-right corner of the image and small values for the rest of the matrix-valued
image (field). We compare the proposed estimator
$\hat{\lambda}_i^{\text{Tweedie}}$ to the estimator
$\hat{\lambda}_i^{\text{MOM}} = \max\big( \frac{\nu_1(\nu_2 - 2)}{\nu_2}f_i -
\nu_1, 0\big)$, which is obtained by the method of moments (MOM) and truncated
at 0, and also compare them for different $\sigma^2_i$'s. Note that we choose to
compare with the MOM estimator instead of the MLE for two reasons: (i) the MLE
for the non-centrality parameter of non-central $F$ distribution is expensive to
compute, and (ii) the MOM is commonly used as a standard for comparison, see for
example \cite{kubokawa1993estimation}. As we can see from the results, there are more black spots (which indicate no difference) in the top-right corner for the MOM estimates than the Tweedie-adjusted estimates. These show that the Tweedie-adjusted estimator allows us to capture the true region of difference
better than the MOM estimator does, especially for large
$\sigma^2_i$'s. This is due to the presence of the shrinkage effect in Tweedie's
formula. 

\begin{figure}[ht!]
\centering
\subfigure[$\hat{\lambda}^{\text{Tweedie}}$, $\sigma_i \iid U(0.1,0.3)$]
{\includegraphics[width=0.465\textwidth]{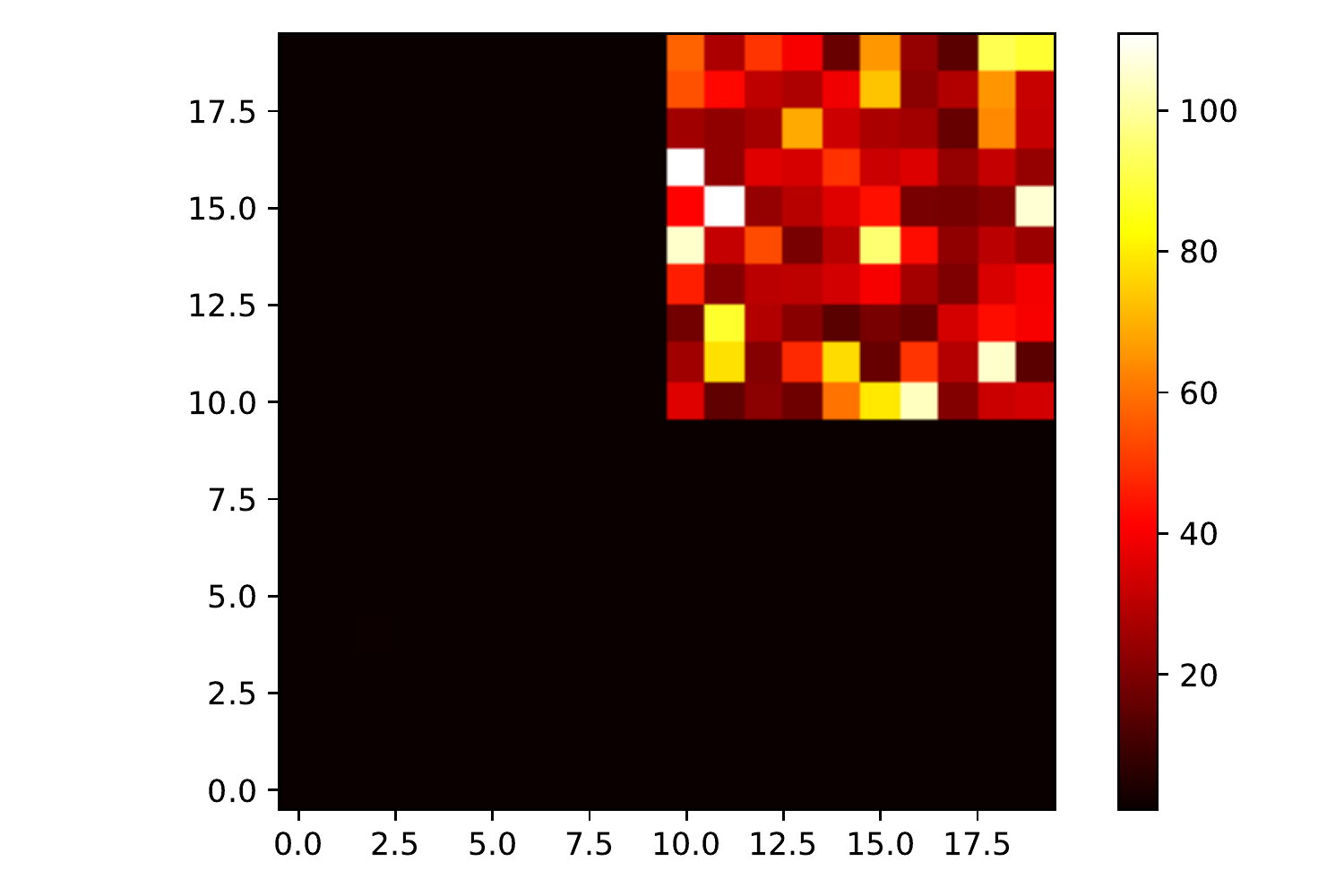}}
\hfill
\subfigure[$\hat{\lambda}^{\text{Tweedie}}$, $\sigma_i \iid U(0.3,0.8)$]
{\includegraphics[width=0.465\textwidth]{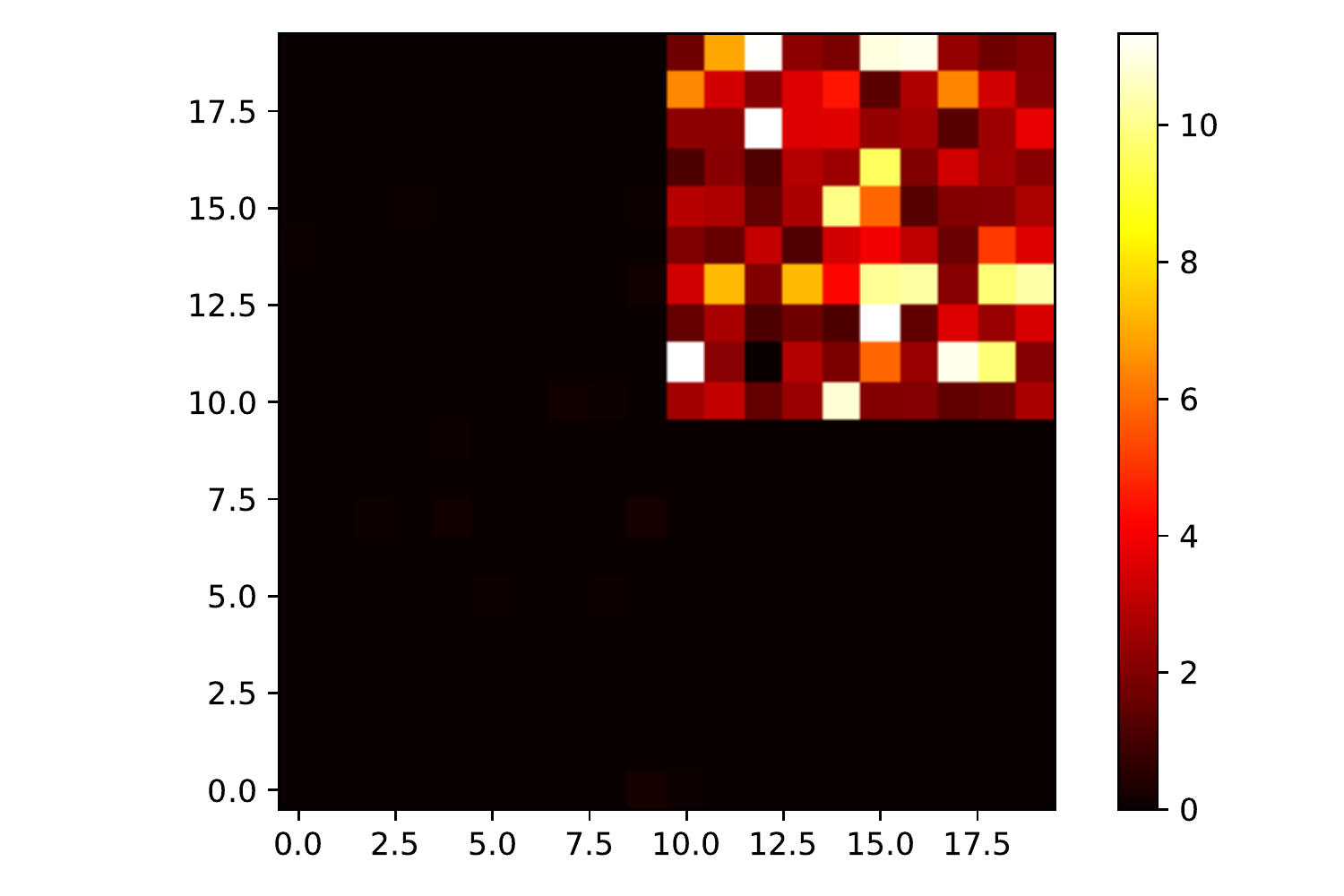}}
\\[7mm]
\subfigure[$\hat{\lambda}^{\text{MOM}}$, $\sigma_i \iid U(0.1,0.3)$]
{\includegraphics[width=0.465\textwidth]{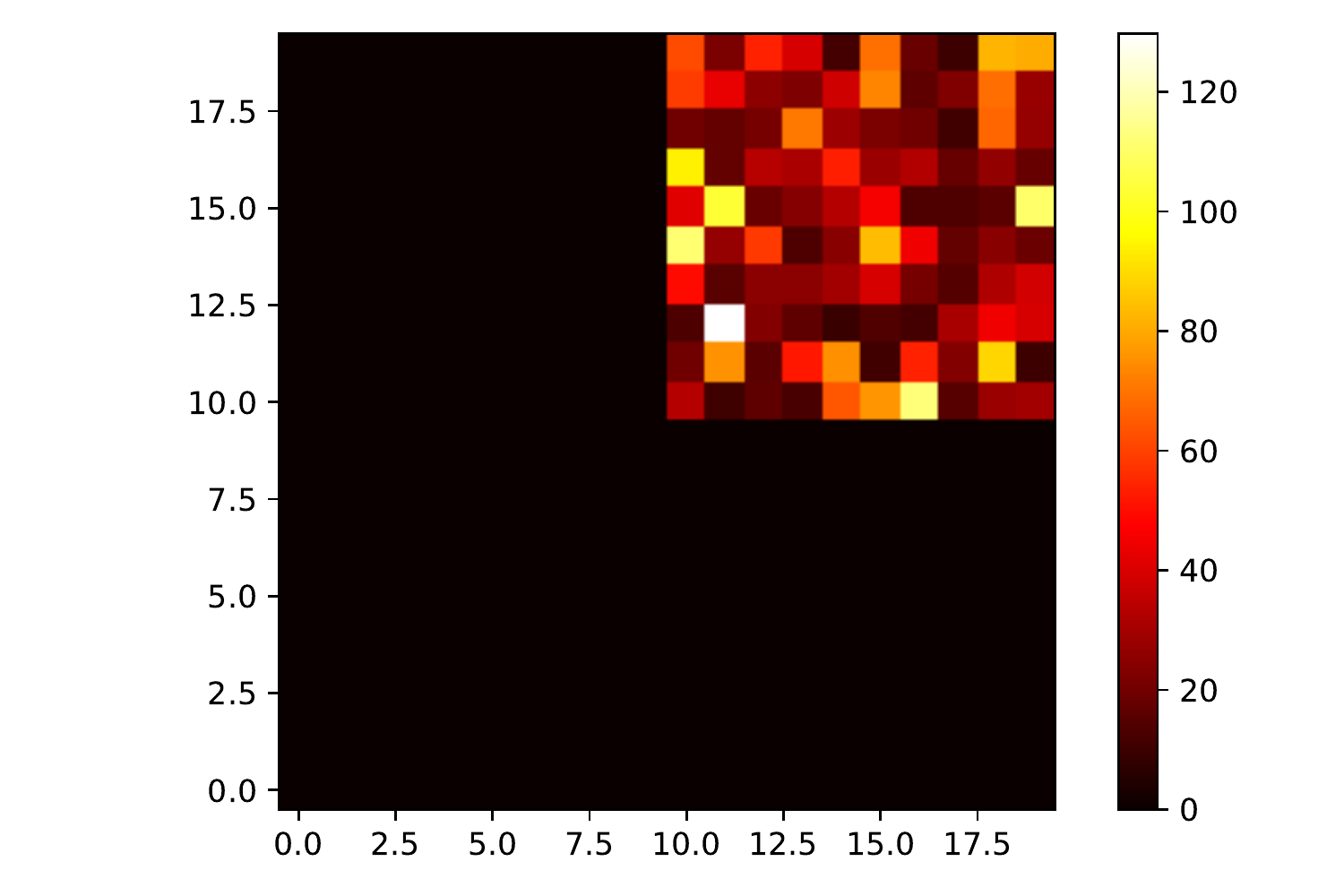}}
\hfill
\subfigure[$\hat{\lambda}^{\text{MOM}}$, $\sigma_i \iid U(0.3,0.8)$]
{\includegraphics[width=0.465\textwidth]{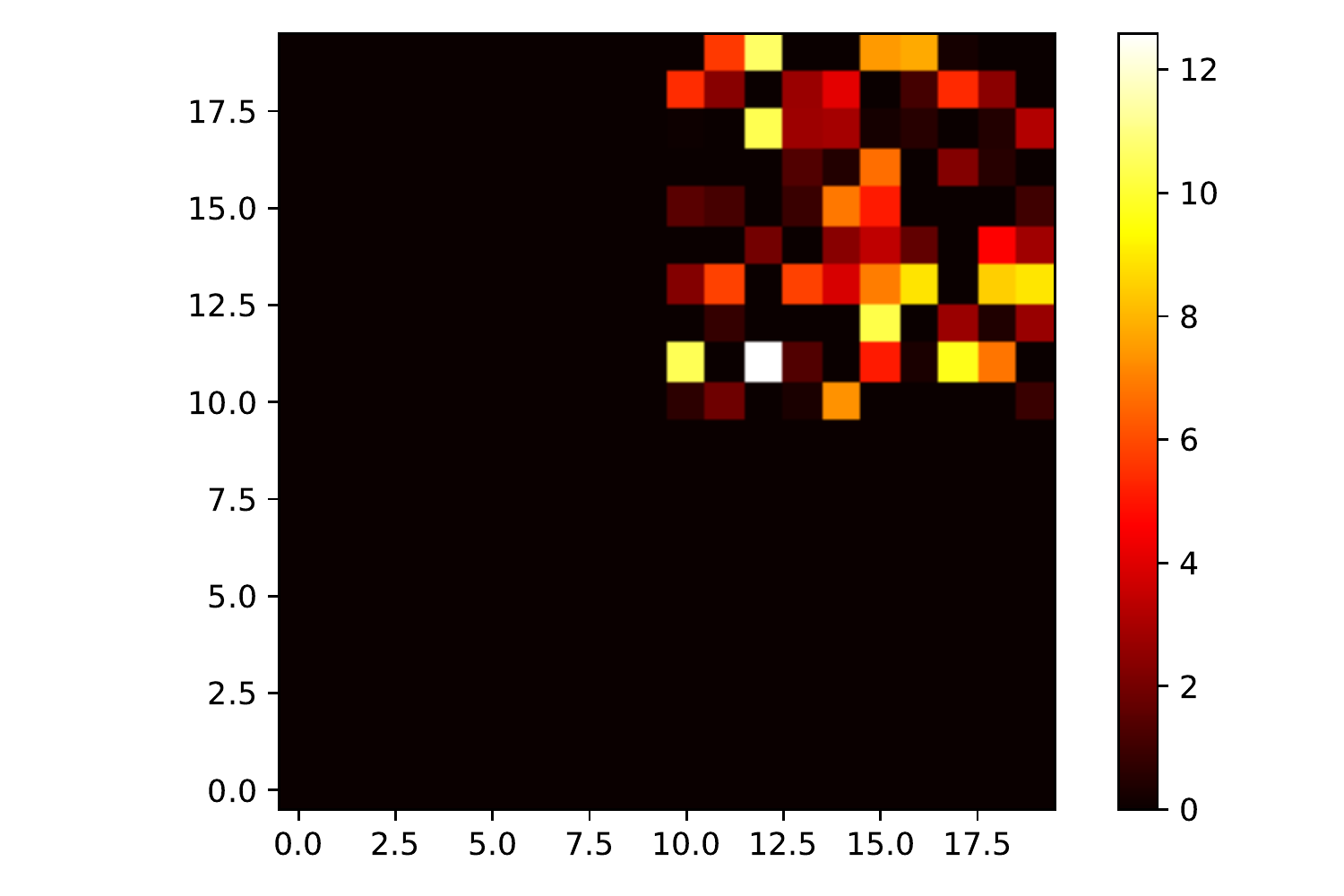}}
\hfill
\caption{Comparison between the proposed shrinkage estimates and MOM estimates of the non-centrality parameters.}\label{fig:tweedie_synthetic}
\end{figure}

\subsection{Real Data Experiments}\label{sec:real_data}

In this section, we present two real data experiments involving dMRI data sets. The diffusion MRI data used here is
available for public access via \url{https://pdbp.ninds.nih.gov/our-data}. dMRI
is a diagnostic imaging technique that allows one to non-invasively probe the
axonal fiber connectivity in the body by making the magnetic resonance signal sensitive to water diffusion through the tissue being imaged. In dMRI,
the water diffusion is fully characterized by the probability density function
(PDF) of the displacement of water molecules, called the ensemble average
propagator (EAP) \citep{callaghan1993principles}. A simple model that has been
widely used to describe the displacement of water molecules is a zero mean
Gaussian; its covariance matrix defines the diffusion tensor and characterizes
the diffusivity functional locally. The diffusion tensors are $3 \times 3$ SPD
matrices and hence have $6$ unique entries that need to be determined. Thus, the
diffusion imaging technique employed in this case involves the application of at
least $6$ diffusion sensitizing magnetic gradients for acquisition of full 3D MR
images \citep{basser1994mr}. This dMRI technique is called diffusion tensor
imaging (DTI). Some practical techniques for estimating the diffusion tensors
and population mean of diffusion tensors accurately can be found in
\cite{wang2004tensor}, \cite{chefd2004regularizing},
\cite{fletcher2004principal}, \cite{alexander2005multiple},
\cite{zhou2008bayesian}, \cite{lenglet2006dti}, and \cite{dryden2009non}. 

DTI has been the de facto non-invasive dMRI diagnostic imaging technique of
choice in the clinic for a variety of neurological ailments. After
fitting/estimating the diffusion tensors at each voxel, scalar-valued or
vector-valued measures are derived from the diffusion tensors for further
analysis. For instance, fractional anisotropy (FA) is a scalar-valued function
of the eigenvalues of the diffusion tensor and it was found that FA was reduced
in the neuro-anatomical structure called the Substantia Nigra in patients with
Parkinson's disease compared to control subjects \citep{vaillancourt2009high}. In
\cite{schwartzman2008false}, the authors proposed to use the principal
direction, which is the eigenvector corresponding to the largest eigenvalue of
the diffusion tensor, to represent the entire tensor; the principal direction
contains directional information that any scalar measures such as the FA does
not and hence, might be able to capture some subtle differences in the
anatomical structure of the brain. In this work, we use the full diffusion
tensor which captures both the eigen-values and eigen-vectors, in order to
assess the changes caused by pathologies to the underlying tissue
micro-architecture revealed via dMRI.

\subsubsection{Estimation of the Motor Sensory Tracts of Patients with Parkinson's Disease} \label{sec:real1}

In this section, we demonstrate the performance of SURE.Full-FM on the dMRI
scans of human brain data acquired from 50 patients with Parkinson's disease and 44 control
(normal) subjects. The diffusion MRI acquisition parameters were as
follows: repetition time = 7748ms, echo time = 86ms, flip angle = $90^{\circ}$,
number of diffusion gradients = 64, field of view = $224 \times 224$ mm,
in-plane resolution = 2 mm isotropic, slice-thickness = 2 mm, and SENSE factor =
2. All the dMRI data were pre-registered into a common coordinate frame prior to
any further data processing. 

The motor sensory area fiber tracts (M1 fiber tracts) are extracted from each
patient of the two groups using the FSL software
\citep{behrens2007probabilistic}. The size (length) of each tract is 33 voxels
for the left hemisphere tract and 34 voxels for the right hemisphere tract,
respectively. Diffusion tensors are then fitted to each of the voxels along each
of the tracts to obtain $p = 33$ ($p = 34$) $3 \times 3$ SPD matrices.  We then compute the
Log-Euclidean FM tract for each group. The FM tract here also has
33 (34) diffusion tensors along the tract. We will use these FMs computed from
the full population of each group as the `ground truth'; thus, the underlying
distribution in this experiment is the empirical distribution formed by the
observed data, i.e.\ the 33 (34) SPD matrices. Then, we randomly draw a
subsample of size $n = 10, 20, 50, 100$ (with replacement) respectively from
each group and compute the SURE.Full-FM (our proposed estimator)
and the three competing estimators (FM.LE and SURE-FM respectively) of
each group for each subsample size $n$. We compare the performance of the
different estimators by the Log-Euclidean distance between
the estimator and the `ground truth' FMs. The entire procedure is repeated for
$m=100$ random draws of subsamples and the average distances are reported in
Table~\ref{tab:dMRI}. Since our proposed shrinkage estimator jointly estimates
the FM and the covariance matrices, we also compare our covariance estimates,
denoted SURE.Full-Cov, with the MLE of the covariance matrices, i.e.,\ the
sample covariance matrices. The results are shown in Table~\ref{tab:dMRI_Sig}.

\begin{table}[ht!]
    \centering
    \setlength{\tabcolsep}{1mm}
\caption{Average loss for the three estimators in estimating the population FM for varying $n$ (with the standard errors in parentheses).}
\label{tab:dMRI} 
\begin{tabular}{lcccc}
    \toprule
    $n$     & 10                    & 20                   & 50 
            & 100\\\midrule
    FM.LE   & 0.774 (0.03)          & 0.405 (0.01)         & 0.159 (0.005)
            & 0.079 (0.002) \\
    %FM.GL   & 0.758 (0.03)          & 0.396 (0.01)         & 0.158 (0.004)
    %        & 0.080 (0.002) \\
    SURE-FM & 0.772 (0.03)          & 0.404 (0.01)         & 0.160 (0.005) 
            & 0.079 (0.002)\\
    SURE.Full-FM & \textbf{0.199 (0.01)}   & \textbf{0.151 (0.003)}        & \textbf{0.094 (0.002)}
            & \textbf{0.057 (0.001)}\\
    \bottomrule
\end{tabular} 
\end{table} 

\begin{table}[ht!]
    \centering
    \setlength{\tabcolsep}{1mm}
\caption{Average loss for the two estimators, MLE and SURE.Full-Cov, in estimating the population covariance matrices for varying $n$ (with the standard errors in parentheses).}
\label{tab:dMRI_Sig} 
\begin{tabular}{lccccc}
    \toprule
    $n$           & 10                      & 20                   & 50             
                  & 100\\\midrule
    MLE           & 123.69 (5.71)           & 66.80 (2.69)         & 25.54 (0.91)   
                  & 12.91 (0.41) \\
    SURE.Full-Cov & \textbf{78.77 (3.21)}   &\textbf{52.02 (2.03)} & \textbf{22.81 (0.80)}   
                  & \textbf{12.15 (0.38)}\\
    \bottomrule
\end{tabular} 
\end{table} 

As is evident from Table~\ref{tab:dMRI}, the SURE.Full-FM outperforms the
competing estimators under varying size of subsamples. Also note that, as the
sample size increases, the improvement is less significant, which is consistent
with the observations on the synthetic data experiments in
section~\ref{sec:synthetic}. Recall that in section~\ref{sec:synth1}, the
SURE.FM and the SURE.Full-FM  perform equally well when the assumption $\Sigma_i
= A_iI$ is not violated severely. For real data, it is impossible to check this
assumption and it is unlikely to be true. Hence, in this real data experiment,
SURE.Full-FM outperforms SURE-FM by a large margin. The improvement of the
proposed shrinkage estimator for the covariance matrices over the MLEs is
evident from Table~\ref{tab:dMRI_Sig}.

\subsubsection{Tweedie-Adjusted Estimator as an Imaging Biomarker}\label{sec:real2}

Finally, we apply the shrinkage estimator proposed in section~\ref{sec:tweedie}
to identify the regions that are significantly distinct in diffusional
properties (as captured via diffusion tensors) between patients with Parkinson's disease and control
subjects. In this experiment, the dataset consists of DTI scans of 46 patients with Parkinson's disease and 24 control subjects. To identify the
differences between the two groups, we use the DTI of the
whole brain, which contains $p = 112 \times 112 \times 60$ voxels, without
pre-selecting any region of interest. The diffusion tensors are fitted at each
voxel across the whole brain volume. The goal of this experiment is to see if we
are able to automatically identify the regions capturing the large differences
between the Parkinson's disease group and control groups and qualitatively validate our findings
against what is expected by expert neurologists. In this context,
\cite{prodoehl2013diffusion} observed that the region most affected by
Parkinson's disease is the Substantia Nigra, which is contained in the Basal
Ganglia region of the human brain. 

After computing both the Tweedie-adjusted estimates and the MOM estimates of the
non-centrality parameters, we select voxels with the largest 1\% estimates of
the non-centrality parameters and mark those voxels in bright red. (Note that
there are other ways to determine the threshold for the selection, for example
by using the false discovery rate (FDR) in hypothesis testing problems. However,
this is beyond the scope of this paper and we refer the reader to
\cite{schwartzman2008empirical} for interesting work on FDR analysis for DTI
datasets.)  These voxels are where the large differences between Parkinson's disease group and control group
are observed. The results are shown in Figure~\ref{fig:PD_diff}. For a better visualization, we threshold the estimates by the top $1\%$. Note that, to
take into account the spatial structure, we apply a $4 \times 4 \times 4$
average mask to smooth the result. This smoothing may also be achieved by incorporating spatial regularization term in the expression for SURE~\eqref{eqn:sure}. However, the ensuing analysis becomes much more complicated and will be addressed in our future work. 

From the results, we can see that the
shrinkage effect of our Tweedie-adjusted estimate successfully corrects the
selection bias and produces more accurate identification of the affected
regions. Our method is able to capture the Substantia Nigra, which is the region
known to be affected by Parkinson's disease. \emph{Notably, our method did not
point to the apparently spurious and  isolated regions selected by the MOM
estimator (the tiny red spots in Figure~\ref{fig:mom})}.  We also mention that
the past research using FA-based analysis did not report the Internal Capsule as
a region affected by Parkinson's disease. We suspect that this discrepancy is
due to the fact that FA discards the directional information of the diffusion
tensors while we use the full diffusion tensor which contains the directional
information. We plan to conduct a large-scale experiment in our future work to
see if this observation continues to hold.

\begin{figure}[ht!]
\centering
\subfigure[MOM estimates]
{\includegraphics[width=0.465\textwidth]{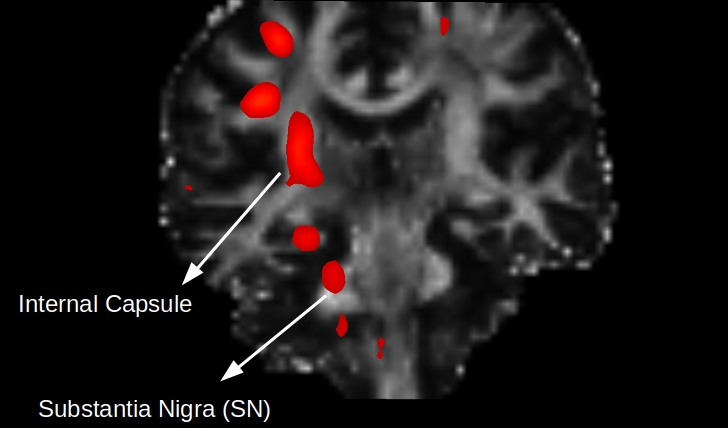} \label{fig:mom}}
\hfill
\subfigure[Tweedie-adjusted estimates]
{\includegraphics[width=0.465\textwidth]{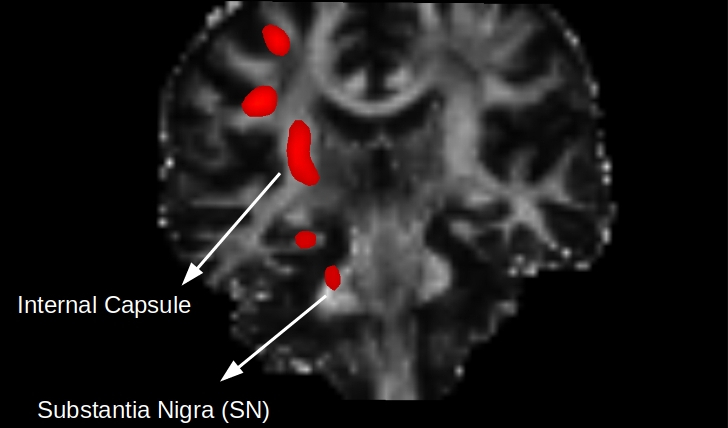} \label{fig:tweedie}}
\caption{Differences between Parkinson's disease and control groups are superimposed on dMRI
scans of a randomly-chosen Parkinson's disease patient and indicated in red.}
\label{fig:PD_diff}
\end{figure}

\section{Discussion and Conclusions}\label{conc}

In this work, we presented shrinkage estimators for the mean and covariance
of the Log-Normal distribution defined on the manifold $P_N$ of $N \times N$ SPD
matrices. We also showed that the proposed shrinkage estimators are
asymptotically optimal in a large class of estimators including the MLE. The
proposed shrinkage estimators are in closed form and resemble (in form) the James-Stein
estimator in Euclidean space $\mathbb{R}^p$. We demonstrated that the proposed
shrinkage estimators outperform the MLE via several synthetic data examples and
real data experiments using diffusion tensor MRI datasets. The improvements of the
proposed shrinkage estimators are significant especially in the small sample
size scenarios, which is very pertinent to medical imaging applications.

Our work reported here is however based on the Log-Euclidean metric, and one of
the drawbacks of this metric is that it is not affine (GL) invariant, which is a
desired property in some applications. Unfortunately, the derivation of the
shrinkage estimators under the GL-invariant metric is challenging due to the
fact that there is no closed-form expression for some elementary quantities such as
the sample FM which makes it almost impossible to derive the corresponding SURE.
Our future research efforts will focus on developing a general framework for
designing shrinkage estimators that are applicable to general Riemannian
manifolds. 

For applications in localizing the regions of the brain where the two groups
differ, our approach already works well, but it can potentially be improved if we take
into account the fact that neighboring voxels within a region are close under some 
measure of similarity. For instance, $M^{(k)}_i$ and $M^{(k)}_{j}$ should be close if 
voxels $i$ and $j$ are close. Currently, our approach is to apply a spatial smoother to the Tweedie-adjusted estimates. Instead, the improvement can be achieved by imposing regularization 
constraints, e.g.\ a spatial process prior, in the proposed framework. However, the ensuing analysis becomes rather 
complicated and will be the focus of our future efforts.\\

%\textbf{Acknowledgements:} We thank Dr. David Vaillancourt for providing us the diffusion MRI brain scans used in this research.

\newpage
\setcounter{section}{0}
\begin{center}
    {\LARGE\bf Supplement to ``An Empirical Bayes Approach to Shrinkage Estimation on the Manifold of Symmetric Positive-Definite Matrices''}
\end{center}

\bigskip

\begin{abstract}
In this supplement, we provide the technical details in section 3.2 of the main paper, including the derivation of the risk function and the SURE, the proofs for Theorem 5 and 6, and the implementation details for minimizing SURE. The notation used in this supplement is the same as in the main paper.
\end{abstract}

\section{Preliminary}%
\label{sec:preliminary}

Before presenting the proofs, we review the following elementary results for
both multivariate normal distributions and Wishart distributions which are use
extensively in the proofs. Let $X\sim N_p(\mu, \Sigma)$. Then
\begin{align*}
    E\|X - c\|^2 & = \tr \Sigma + \|\mu-c\|^2\\
    E\|X-c\|^4 & = (\tr\Sigma)^{2} + 2\tr(\Sigma^{2}) +
    4(\mu-c)^T\Sigma(\mu-c) + 2\|\mu-c\|^{2}\tr\Sigma + \|\mu-c\|^4
\end{align*}
where $c \in \mathbb{R}^p$. For $X\sim \text{LN}(M, \Sigma)$, 
\begin{align*}
    E d^2_{\text{LE}}(X, C) & = \tr \Sigma + d^2_{\text{LE}}(M, C)\\
    E d^4_{\text{LE}}(X, C) & = (\tr\Sigma)^{2} + 2\tr(\Sigma^{2}) +
    4(\widetilde{M}-\widetilde{C})^T\Sigma(\widetilde{M}-\widetilde{C}) +
    2d^2_{\text{LE}}(M, C)\tr\Sigma + d^4_{\text{LE}}(M, C)
\end{align*}
where $C \in P_N$. These results can be easily obtained from the definition of
the Log-Normal distribution.

Let $Y$ be a $p\times p$ symmetric matrix with eigenvalues
$\lambda_1,\ldots,\lambda_p$ and let $\kappa=(k_1,\ldots,k_p)$ be a (non-increasing) partition of
a positive integer $k$, i.e.\ $k_1\geq k_2\geq,\ldots, \geq k_p$ and
$\sum_{i=1}^pk_i=k$ where the $k_i$'s are non-negative integers. The \emph{zonal
polynomial} of $Y$ corresponding to $\kappa$, denoted by $C_\kappa(Y)$, is a
symmetric, homogeneous polynomial of degree $k$ in the eigenvalues
$\lambda_1,\ldots,\lambda_p$. One of the properties of the zonal polynomials is
that $\big(\tr Y \big)^k = \sum_{\kappa}C_\kappa(Y)$. For a more precise
definition of the zonal polynomial and its calculations, we refer the readers to
Ch.~7 of \citet{muirhead1982aspects}. The following lemma is essential in our
work.
\begin{lemma}\label{lem:zonal}(\citet{muirhead1982aspects}, Corollary 7.2.4) If
    $Y$ is positive definite, then $C_\kappa(Y)>0$ for all partitions $\kappa$.
\end{lemma}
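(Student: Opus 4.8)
The plan is to exhibit $C_\kappa(Y)$ as a strictly positive constant times an average over the orthogonal group $O(p)$ of a product of leading principal minors of $Y$, and then to conclude by Sylvester's criterion.

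First I would recall the construction of the zonal polynomials. Let $V_k$ be the space of homogeneous degree-$k$ polynomials in the distinct entries of a symmetric $p\times p$ argument, with $GL(p,\mathbb{R})$ acting by $(g\cdot f)(Y)=f(g^{T}Yg)$; this module decomposes as $V_k=\bigoplus_{\kappa}V_\kappa$ with $V_\kappa$ irreducible, indexed by partitions $\kappa$ of $k$ into at most $p$ parts, and $C_\kappa$ is, by definition, the $V_\kappa$-component of $(\tr Y)^{k}$ (which is consistent with $(\tr Y)^k=\sum_\kappa C_\kappa(Y)$). For $\kappa=(k_1,\dots,k_p)$ set
\[
q_\kappa(Y)=\prod_{i=1}^{p}\big(\det Y_{[i]}\big)^{\,k_i-k_{i+1}},\qquad k_{p+1}:=0,
\]
where $Y_{[i]}$ is the top-left $i\times i$ block of $Y$. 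A short Abel-summation check shows $q_\kappa$ is homogeneous of degree $\sum_i k_i=k$, so $q_\kappa\in V_k$; moreover $q_\kappa$ is a highest-weight vector corresponding to $\kappa$ (each leading principal minor picks up only a squared determinant factor under congruence by diagonal matrices and is invariant under congruence by unitriangular matrices), hence $q_\kappa\in V_\kappa$.

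Next I would average over $O(p)$. The operator $f\mapsto\bar f$ with $\bar f(Y)=\int_{O(p)}f(HYH^{T})\,dH$ (Haar probability measure) leaves each $GL(p)$-submodule invariant, in particular $V_\kappa$, and its image on $V_\kappa$ is the subspace of $O(p)$-invariant (``zonal'') polynomials in $V_\kappa$. Since $\big(GL(p),O(p)\big)$ is a Gelfand pair, that subspace is one-dimensional and spanned by $C_\kappa$, so $\bar q_\kappa=c_\kappa C_\kappa$ for a scalar $c_\kappa$. Evaluating at $Y=I_p$ gives $c_\kappa C_\kappa(I_p)=\bar q_\kappa(I_p)=q_\kappa(I_p)=1$, since every leading minor of $I_p$ equals $1$; thus $c_\kappa=1/C_\kappa(I_p)$, and it remains only to see $C_\kappa(I_p)>0$. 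This I would read off the standard closed-form evaluation of $C_\kappa(I_p)$, a ratio built from factors of the form $\tfrac{p}{2}-\tfrac{i-1}{2}+\ell$ and $2(k_i-k_j)+(j-i)$, all of which are strictly positive for $1\le i<j\le p$ and $i\le p$ because $\kappa$ is non-increasing with at most $p$ parts. Hence $C_\kappa(Y)=C_\kappa(I_p)\int_{O(p)}q_\kappa(HYH^{T})\,dH$ with $C_\kappa(I_p)>0$, and the conclusion follows: if $Y\succ 0$ then $HYH^{T}\succ 0$ for every $H\in O(p)$, so by Sylvester's criterion each $\det\big((HYH^{T})_{[i]}\big)>0$, whence $q_\kappa(HYH^{T})>0$; integrating a continuous strictly positive function against a probability measure yields a positive number, and multiplying by $C_\kappa(I_p)>0$ gives $C_\kappa(Y)>0$.

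The main obstacle is the integral representation $\bar q_\kappa=c_\kappa C_\kappa$: this is exactly where the representation theory of the pair $\big(GL(p),O(p)\big)$ does the work — the decomposition $V_k=\bigoplus_\kappa V_\kappa$, the identification of $q_\kappa$ as the highest-weight vector of $V_\kappa$, and the one-dimensionality of the $O(p)$-fixed subspace of $V_\kappa$ — and it is the substance of Chapter~7 of \citet{muirhead1982aspects}. The positivity of $C_\kappa(I_p)$ is a comparatively routine point; alternatively one can bypass the explicit formula by noting that $\bar q_\kappa$ is a nonzero $O(p)$-invariant element of $V_\kappa$ (nonzero since $\bar q_\kappa(I_p)=1$), hence a nonzero multiple of $C_\kappa$, and then fixing the sign of that multiple by comparing the coefficients of the leading monomial symmetric function on the two sides.
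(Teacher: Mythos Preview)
The paper does not give its own proof of this lemma; it simply states the result and cites \citet{muirhead1982aspects}, Corollary~7.2.4. Your argument is correct and is in fact the standard one underlying that corollary: Muirhead's Theorem~7.2.3 establishes precisely the integral representation $C_\kappa(Y)=C_\kappa(I_p)\int_{O(p)}\prod_{i=1}^{p}\big(\det(HYH^{T})_{[i]}\big)^{k_i-k_{i+1}}\,dH$, from which positivity for $Y\succ 0$ follows immediately via Sylvester's criterion, exactly as you outline. So there is nothing to compare---you have reconstructed the cited proof rather than diverged from it.

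One small remark: your claim that the $O(p)$-averaging operator ``leaves each $GL(p)$-submodule invariant'' is correct but deserves the one-line justification you implicitly use later, namely that for fixed $H\in O(p)\subset GL(p)$ the map $f\mapsto f(H\,\cdot\,H^{T})$ already sends $V_\kappa$ to itself, and $V_\kappa$ is finite-dimensional hence closed under integration. Also, your handling of $C_\kappa(I_p)>0$ is slightly hand-wavy; the cleanest route is the explicit formula (Muirhead, Theorem~7.2.2 or the expression in terms of $(p/2)_\kappa$ and the normalization constant), whose factors are visibly positive when $\kappa$ has at most $p$ parts. Neither point is a gap, just places where a reader might want one more sentence.
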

The $j$th \emph{elementary symmetric function} of $Y$, denoted by $\tr_jY$, is
the sum of all principal minors of order $j$ of the matrix $Y$. For the case of
$j = 1$, $\tr_1 Y = \sum_{i=1}^p\lambda_i = \tr Y$, and for the case of $j = 2$,
$\tr_2 Y = \sum_{i < j}\lambda_i\lambda_j$. The definition gives rise to the following
identities which are useful in the proofs
\begin{align}
    (\tr Y)^2 & = \tr (Y^2) + 2 \tr_2 Y \label{eq:trace}\\
    (\tr Y)^4 & = (\tr(Y^2))^2 + 4\tr(Y^2)\tr_2Y + 4(\tr_2
    Y)^2 \nonumber \\
                      & = \tr(Y^4) + 2\tr_2Y^2 + 2 (\tr_2Y)(\tr
                      Y)^2\label{eq:trace2} 
\end{align}

For $S\sim\text{Wishart}_p(\Sigma, \nu)$, the $k$th moment, $k=0,1,2,\ldots$ of
$\tr S$ is given by
\begin{align}\label{eq:moment_tr}
    E(\tr S)^k = 2^k\sum_{\kappa}\Big(\frac{\nu}{2}\Big)_\kappa C_\kappa(\Sigma)
\end{align}
where 
\[
    (a)_\kappa = \prod_{i=1}^p\Big(a - \frac{i-1}{2}\Big)_{k_i}
\]
is called the \emph{generalized hypergeometric coefficient} and
\[
    (a)_{k_i}=a(a+1)\ldots(a+k_i-1),\,(a)_0=1
\]
(\citet{gupta2000matrix}, Theorem
3.3.23). Next, we review some elementary results for the Wishart distribution:
\begin{align*}
    ES & = \nu\Sigma\\
    ES^2 & = \nu(\nu+1)\Sigma^2 + \nu(\tr \Sigma)\Sigma\\
    E \tr_k S & = \nu(\nu-1)\cdots(\nu-k+1)\tr_k \Sigma\\
    E (\tr S)^2 & = \nu(\nu+2)(\tr \Sigma)^2 - 4\nu\tr_2\Sigma
                  = \nu^2(\tr\Sigma)^2 + 2\nu\tr(\Sigma^2).
\end{align*}
These results can be found in \citet{gupta2000matrix}(p.~99 and p.~106). 

\section{Derivation of the Risk Function and the SURE}
\label{sec:derivation_risk}

Recall that the loss function is $L\big((\widehat{\bd{M}}, \widehat{\bd{\Sigma}}), (\bd{M},\bd{\Sigma})\big) = p^{-1}\sum_{i=1}^pd_{\text{LE}}^2(\widehat{M}_i,M_i) + p^{-1}\sum_{i=1}^p\|\widehat{\Sigma}_i-\Sigma_i\|^2 = L_1(\widehat{\bd{M}}, \bd{M}) + L_2(\widehat{\bd{\Sigma}}, \bd{\Sigma})$. Write $R\big((\widehat{\bd{M}},\widehat{\bd{\Sigma}}),(\bd{M},\bd{\Sigma})\big) = EL_1(\widehat{\bd{M}}, \bd{M}) + EL_2(\widehat{\bd{\Sigma}}, \bd{\Sigma}) = R_1(\widehat{\bd{M}}, \bd{M}) + R_2(\widehat{\bd{\Sigma}}, \bd{\Sigma})$. Then 
\begin{align*}
    R_1(\widehat{\bd{M}}, \bd{M}) & = p^{-1}\sum_{i=1}^p Ed^2_{\text{LE}}(\widehat{M}_i,M_i) \\
    & = p^{-1}\sum_{i=1}^p\Big[ \frac{n^2}{(\lambda+n)^2}Ed^2_{\text{LE}}(\bar{X}_i, M_i) + \frac{\lambda^2}{(\lambda+n)^2}d^2_{\text{LE}}(\mu, M_i)\Big]\\
    & = p^{-1}(\lambda+n)^{-2}\sum_{i=1}^p\Big[n\text{tr}\Sigma_{i}+\lambda^{2}d^2_{\text{LE}}(\mu, M_{i})\Big]
\end{align*}
and 
\begin{align*}
    R_2(\widehat{\bd{\Sigma}}, \bd{\Sigma}) & = p^{-1}\sum_{i=1}^p(\nu+n-q-2)^{-2}E\|(\Psi-(\nu-q-1)\Sigma_i)+(S_i-(n-1)\Sigma_i)\|^2\\
    & = p^{-1}\sum_{i=1}^p(\nu+n-q-2)^{-2}\Big[ E\tr(S_i^2) - 2(n-1)E\tr(S_i\Sigma_i) + (n-1)^2\tr(\Sigma_i^2)\\
    & \qquad + \tr(\Psi^2) - 2(\nu-q-1)\tr(\Psi\Sigma_i) + (\nu-q-1)^2\tr(\Sigma_i^2)\Big]\\
    & = p^{-1}\sum_{i=1}^p(\nu+n-q-2)^{-2}\Big[ n(n-1)\tr(\Sigma_i^2) + (n-1)(\tr\Sigma_i)^2 - 2(n-1)^2\tr(\Sigma_i^2)\\
    & \qquad + (n-1)^2\tr(\Sigma_i^2) + \tr(\Psi^2) - 2(\nu-q-1)\tr(\Psi\Sigma_i) + (\nu-q-1)^2\tr(\Sigma_i^2)\Big]\\
    & = p^{-1}\sum_{i=1}^p(\nu+n-q-2)^{-2}\Big[\big(n-1+(\nu-q-1)^{2}\big)\text{tr}(\Sigma_{i}^{2})\\
    & \qquad - 2(\nu-q-1)\text{tr}(\Psi\Sigma_{i})+(n-1)(\text{tr}\Sigma_{i})^{2}+\text{tr}(\Psi^{2})\Big].
\end{align*}
To obtain the SURE for this risk function, we first have to find unbiased estimates of the quantities $\tr\Sigma_i$, $d^2_{\text{LE}}(\mu, M_i)$, $\tr(\Sigma_i^2)$, $\tr(\Psi\Sigma_i)$, and $(\tr\Sigma_i)^2$. With the results provided in Section~\ref{sec:preliminary}, it is easy to verify the following equations:
\begin{align*}
    \tr \Sigma_i & = E\big((n-1)^{-1}\tr S_i\big)\\
    \tr (\Psi \Sigma_i) & = E\big( (n-1)^{-1}\tr (\Psi S_i)\big)\\
    (\tr \Sigma_i)^2 & = E\Big(\frac{n(\tr S_i)^2 - 2\tr S_i^2}{(n-1)(n+1)(n-2)}\Big)\\
    \tr \Sigma_i^2 & = E\Big(\frac{(n-1)\tr S_i^2 - (\tr S_i)^2}{(n-1)(n+1)(n-2)}\Big)\\
    d^2_{\text{LE}}(\mu, M_i) & = E\Big( d^2_{\text{LE}}(\bar{X}_i, \mu) - \frac{\tr S_i}{n(n-1)}\Big).
\end{align*}
Plugging the above unbiased estimates into the risk function we obtain
\begin{align*}
\text{SURE}(\lambda,\Psi,\nu, \mu) & = p^{-1}\sum_{i=1}^{p}\Bigg\{(\lambda+n)^{-2}\Big[\frac{n}{n-1}\tr S_i + \lambda^{2}d^2_{\text{LE}}(\bar{X}_{i}, \mu) - \frac{\lambda^2}{n(n-1)}\tr S_i\Big]\\
 & \qquad + (\nu+n-q-2)^{-2}\Big[(n-1+(\nu-q-1)^{2})\Big(\frac{(n-1)\tr S_i^2 - (\tr S_i)^2}{(n-1)(n+1)(n-2)}\Big)\\
 & \qquad + \frac{n(\tr S_i)^2 - 2\tr S_i^2}{(n+1)(n-2)} - 2\frac{\nu-q-1}{n-1}\tr(\Psi S_i) + \tr(\Psi^2)\Big]\Bigg\}\\
 & = p^{-1}\Bigg\{\sum_{i=1}^{p}(\lambda+n)^{-2}\Big[\frac{n-\lambda^2/n}{n-1}\tr S_{i}+ \lambda^2d^2_{\text{LE}}(\bar{X}_i,\mu)\Big]\\
 & \qquad + (\nu+n-q-2)^{-2}\Big[\frac{n-3 + (\nu-q-1)^2}{(n+1)(n-2)}\tr(S^2_i)\\
 & \qquad + \frac{(n-1)^2-(\nu-q-1)^2}{(n-1)(n+1)(n-2)}\big(\tr S_i\big)^2 - 2\frac{\nu-q-1}{n-1}\tr(\Psi S_i) + \tr (\Psi^2)\Big]\Bigg\}.
\end{align*}

\section{Proofs of the Theorems}%
\label{sec:proofs_of_the_theorems}

The following lemmas are essential for proving Theorem~\ref{thm3}.
\begin{lemma}\label{lem:xie}\citep{xie2012sure} Let $X_i\ind N(\theta_i,A_i)$. Assume the
    following conditions:
    \begin{enumerate}
        \item[(i)] $\limsup_{p\to\infty} p^{-1}\sum_{i=1}^p A_i^2 < \infty$,
        \item[(ii)] $\limsup_{p\to\infty} p^{-1}\sum_{i=1}^p A_i\theta_i^2 < \infty$,
        \item[(iii)] $\limsup_{p\to\infty} p^{-1}\sum_{i=1}^p |\theta_i|^{2+\delta} < \infty$ for some $\delta > 0$.
    \end{enumerate}
    Then $E\big(\max_{1\leq i \leq p} X_i^2\big) = O(p^{2/(2+\delta^*)})$ where
    $\delta^* = \min(1, \delta)$.
\end{lemma}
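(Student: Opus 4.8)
The plan is to split $X_i^2$ into a deterministic ``signal'' piece and a Gaussian ``noise'' piece via the elementary inequality $X_i^2\le 2\theta_i^2+2(X_i-\theta_i)^2$, which yields
\[
E\Big(\max_{1\le i\le p}X_i^2\Big)\le 2\max_{1\le i\le p}\theta_i^2+2\,E\Big(\max_{1\le i\le p}(X_i-\theta_i)^2\Big),
\]
and then to bound the two terms on the right separately, the first using condition (iii) and the second using condition (i) together with a Gaussian maximal inequality.

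For the signal term, since any single summand is at most the whole sum and $t\mapsto t^{2/(2+\delta)}$ is increasing on $[0,\infty)$, we get $\max_{1\le i\le p}\theta_i^2\le\big(\sum_{i=1}^p|\theta_i|^{2+\delta}\big)^{2/(2+\delta)}$, and condition (iii) makes the right-hand side $O\big(p^{2/(2+\delta)}\big)$. Since $\delta^*=\min(1,\delta)\le\delta$ we have $2/(2+\delta)\le 2/(2+\delta^*)$, so this term is already $O\big(p^{2/(2+\delta^*)}\big)$, which is the asserted rate.

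For the noise term, write $X_i-\theta_i=\sqrt{A_i}\,Z_i$ with $Z_i\ind N(0,1)$, so that $\max_i(X_i-\theta_i)^2\le\big(\max_i A_i\big)\big(\max_i Z_i^2\big)$. Condition (i) forces $\max_i A_i\le\big(\sum_{i=1}^p A_i^2\big)^{1/2}=O(p^{1/2})$, and a standard exponential-moment argument handles $\max_i Z_i^2$: from $E\exp(\tfrac14 Z_i^2)=\sqrt2$ and $\exp(\tfrac14\max_i Z_i^2)\le\sum_i\exp(\tfrac14 Z_i^2)$, Jensen's inequality gives $E\big(\max_i Z_i^2\big)\le 4\log(\sqrt2\,p)=O(\log p)$. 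Hence $E\big(\max_i(X_i-\theta_i)^2\big)=O\big(p^{1/2}\log p\big)$.

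Combining, $E\big(\max_i X_i^2\big)=O\big(p^{2/(2+\delta^*)}\big)+O\big(p^{1/2}\log p\big)$; since $\delta^*\le 1$ makes the exponent $2/(2+\delta^*)\ge 2/3>1/2$, the second term is $o\big(p^{2/(2+\delta^*)}\big)$ and the claim follows. (Note that condition (ii) is not needed for this bound; it enters the main argument elsewhere, when controlling other maxima.) I do not anticipate a serious obstacle here; the only point requiring a little care is the heterogeneity of the $A_i$, which prevents replacing $\max_i A_i$ by a constant — one instead carries it as the factor $O(p^{1/2})$ and relies on $2/(2+\delta^*)>1/2$ to absorb it, together with the elementary $O(\log p)$ bound on the maximum of $p$ squared standard normal variables.
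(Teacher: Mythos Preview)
Your argument is correct. The paper does not give its own proof of this lemma; it is quoted verbatim from \cite{xie2012sure} and used as a black box, so there is no in-paper argument to compare against. Your decomposition $X_i^2\le 2\theta_i^2+2A_iZ_i^2$, the bound $\max_i\theta_i^2\le\big(\sum_i|\theta_i|^{2+\delta}\big)^{2/(2+\delta)}=O(p^{2/(2+\delta)})$ from (iii), the crude bound $\max_iA_i\le\big(\sum_iA_i^2\big)^{1/2}=O(p^{1/2})$ from (i), and the exponential-moment estimate $E\max_iZ_i^2=O(\log p)$ are all valid and assemble exactly as you say; the key arithmetic point that $2/(2+\delta^*)\ge 2/3>1/2$ absorbs the $p^{1/2}\log p$ noise term is handled correctly. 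Your observation that condition (ii) is not used here is also accurate: in the paper (ii) is invoked only later, e.g.\ in the variance calculation for the cross term $\langle\log\bar X_i-\log M_i,\log M_i\rangle$ in the proof of Theorem~\ref{thm3}.
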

The next lemma is an extension of the previous lemma to Log-Normal
distributions.
\begin{lemma}\label{lem:LN} Let $X_i \ind LN(M_i, \Sigma_i)$ on $P_N$ and $q = N(N+1)/2$.
    Assume the following conditions:
    \begin{enumerate}
        \item[(i)] $\limsup_{p\to\infty} p^{-1}\sum_{i=1}^p \big(\tr
            \Sigma_i\big)^2 < \infty$,
        \item[(ii)] $\limsup_{p\to\infty} p^{-1}\sum_{i=1}^p
            \widetilde{M}^T_i\Sigma_i\widetilde{M}_i < \infty$,
        \item[(iii)] $\limsup_{p\to\infty} p^{-1}\sum_{i=1}^p \Vert \! \log M_i
            \Vert^{2+\delta} < \infty$ for some $\delta > 0$.
    \end{enumerate}
    Then $E\big(\max_{1\leq i \leq p} \|\log X_i \|^2\big) =
    O(p^{2/(2+\delta^*)})$ where $\delta^*=\min(1, \delta)$. 
\end{lemma}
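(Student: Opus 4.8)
The plan is to reduce Lemma~\ref{lem:LN} to its Euclidean counterpart Lemma~\ref{lem:xie} by passing to $\widetilde{X}_i = \ve(\log X_i) \in \mathbb{R}^q$, for which $\widetilde{X}_i \ind N_q(\widetilde{M}_i, \Sigma_i)$ and, since $\ve$ is an isometry, $\|\log X_i\|^2 = \|\widetilde{X}_i\|^2$. The key device is to diagonalize each $\Sigma_i$ \emph{separately}: write $\Sigma_i = Q_i D_i Q_i^T$ with $Q_i$ orthogonal and $D_i = \diag(d_{i,1},\ldots,d_{i,q})$, where $d_{i,k} > 0$ because $\Sigma_i \in P_q$, and set $\widetilde{Y}_i = Q_i^T \widetilde{X}_i$. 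Then the $\widetilde{Y}_i$ are still independent across $i$ (the $Q_i, D_i$ are deterministic functions of the parameters), $\widetilde{Y}_i \sim N_q(Q_i^T\widetilde{M}_i, D_i)$ has independent coordinates $\widetilde{Y}_{i,k} \ind N\big((Q_i^T\widetilde{M}_i)_k, d_{i,k}\big)$, and $\|\log X_i\|^2 = \|\widetilde{X}_i\|^2 = \|\widetilde{Y}_i\|^2$ by orthogonality. Consequently
\[
\max_{1\le i\le p}\|\log X_i\|^2 = \max_{1\le i\le p}\sum_{k=1}^q \widetilde{Y}_{i,k}^2 \le \sum_{k=1}^q \max_{1\le i\le p}\widetilde{Y}_{i,k}^2 ,
\]
and since $q = N(N+1)/2$ is fixed, it suffices to show $E\big(\max_i \widetilde{Y}_{i,k}^2\big) = O\big(p^{2/(2+\delta^*)}\big)$ for each fixed $k$, where $\delta^* = \min(1,\delta)$.

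For fixed $k$, I would apply Lemma~\ref{lem:xie} to the independent sequence $\widetilde{Y}_{1,k},\ldots,\widetilde{Y}_{p,k}$ with $\theta_{i,k} := (Q_i^T\widetilde{M}_i)_k$ and $A_{i,k} := d_{i,k}$. The whole point is that hypotheses (i)--(iii) of Lemma~\ref{lem:LN} map precisely onto hypotheses (i)--(iii) of Lemma~\ref{lem:xie}: for (i), every eigenvalue of the positive definite matrix $\Sigma_i$ is at most its trace, so $d_{i,k}^2 \le (\tr\Sigma_i)^2$ and $p^{-1}\sum_i A_{i,k}^2 \le p^{-1}\sum_i(\tr\Sigma_i)^2$; for (ii), $\sum_{l=1}^q d_{i,l}\theta_{i,l}^2 = (Q_i^T\widetilde{M}_i)^T D_i (Q_i^T\widetilde{M}_i) = \widetilde{M}_i^T\Sigma_i\widetilde{M}_i$, and since every summand is nonnegative we get $A_{i,k}\theta_{i,k}^2 \le \widetilde{M}_i^T\Sigma_i\widetilde{M}_i$, hence $p^{-1}\sum_i A_{i,k}\theta_{i,k}^2 \le p^{-1}\sum_i \widetilde{M}_i^T\Sigma_i\widetilde{M}_i$; for (iii), $|\theta_{i,k}| \le \|Q_i^T\widetilde{M}_i\| = \|\widetilde{M}_i\| = \|\log M_i\|$, so $p^{-1}\sum_i|\theta_{i,k}|^{2+\delta} \le p^{-1}\sum_i\|\log M_i\|^{2+\delta}$. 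Lemma~\ref{lem:xie} then yields $E\big(\max_i\widetilde{Y}_{i,k}^2\big) = O\big(p^{2/(2+\delta^*)}\big)$, and summing over the $q$ values of $k$ gives the claim.

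The only genuine subtlety -- and the reason a naive coordinatewise argument does not work -- is hypothesis (ii): with the raw coordinates $\widetilde{X}_{i,k}$ one would need $p^{-1}\sum_i \Sigma_{i,kk}\widetilde{M}_{i,k}^2$ to be bounded, and this does \emph{not} follow from boundedness of $p^{-1}\sum_i\widetilde{M}_i^T\Sigma_i\widetilde{M}_i$, since off-diagonal entries of $\Sigma_i$ can make the quadratic form small while a diagonal term $\Sigma_{i,kk}\widetilde{M}_{i,k}^2$ stays large. Diagonalizing $\Sigma_i$ removes this obstruction exactly because, after diagonalization, $\sum_l d_{i,l}\theta_{i,l}^2$ telescopes to the quadratic form $\widetilde{M}_i^T\Sigma_i\widetilde{M}_i$ appearing in hypothesis (ii), which also makes its nonnegativity usable termwise. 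Everything else is routine bookkeeping: fixing a spectral decomposition of each (deterministic) parameter matrix $\Sigma_i$ without disturbing independence, and absorbing the factor $q$ from the sum over coordinates into the $O(\cdot)$ since $N$, hence $q$, is held fixed.
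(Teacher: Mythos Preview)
Your proof is correct and follows the same overall plan as the paper---reduce to Lemma~\ref{lem:xie} coordinate by coordinate and sum over the $q$ coordinates---but with one important difference: you first rotate into an eigenbasis of each $\Sigma_i$, whereas the paper works directly with the raw coordinates $Y_{i,j}=\widetilde{X}_{i,j}$. To verify hypothesis~(ii) of Lemma~\ref{lem:xie} the paper asserts $\Sigma_{i,jj}\mu_{i,j}^2 \le \mu_i^T\Sigma_i\mu_i$, and precisely as you warn in your last paragraph, this inequality can fail when $\Sigma_i$ has negative off-diagonal entries (e.g.\ $\Sigma=\bigl(\begin{smallmatrix}1 & -0.9\\ -0.9 & 1\end{smallmatrix}\bigr)$ and $\mu=(1,1)^T$ give $\Sigma_{11}\mu_1^2=1>0.2=\mu^T\Sigma\mu$). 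Your diagonalization removes the obstruction: after the change of basis, $\sum_l d_{i,l}\theta_{i,l}^2=\widetilde{M}_i^T\Sigma_i\widetilde{M}_i$ with every summand nonnegative, so each term $d_{i,k}\theta_{i,k}^2$ is genuinely dominated by the full quadratic form. Thus your route is not a stylistic variant---it patches a gap in the paper's argument at essentially no extra cost.
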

\begin{proof}
    Write $Y_i = \widetilde{X}_i$ and $\mu_i = \widetilde{M}_i$. Then $\|Y_i\|^2
    = \|\log X_i\|^2$. From the definition of the Log-Normal distribution, $Y_i
    \ind N_q(\mu_i, \Sigma_i)$. Since for $j = 1,\ldots,q$
    \begin{align*}
        \sum_{i=1}^p \Sigma_{i,jj}^2 & < \sum_{i=1}^p\big(\tr \Sigma_i\big)^2\\ 
        \sum_{i=1}^p \Sigma_{i,jj}\mu_{i,j}^2 & < \sum_{i=1}^p
        \mu_i^T\Sigma_i\mu_i = \sum_{i=1}^p \widetilde{M}^T_i\Sigma_i\widetilde{M}_i\\
        \sum_{i=1}^p |\mu_{i,j}|^{2+\delta} & < \sum_{i=1}^p \|\mu_i\|^{2+\delta} = 
        \sum_{i=1}^p \Vert \! \log M_i \Vert^{2+\delta},
    \end{align*}
    by Lemma~\ref{lem:xie}, we have $E(\max_{1\leq i \leq, p}Y^2_{i,j}) =
    O(p^{2/(2+\delta^*)})$. Then 
    \begin{align*}
        E\Big(\max_{1\leq i \leq p} \|\log X_i \|^2\Big) & = E\Big(\max_{1\leq i
        \leq p} \|Y_i\|^2\Big)\\
        & \leq E \Big( \sum_{j=1}^q \max_{1\leq i \leq p} Y^2_{i,j}\Big)
        = \sum_{j=1}^qE \Big(  \max_{1\leq i \leq p} Y^2_{i,j}\Big)
        = O(p^{2/(2+\delta*)})
    \end{align*}
    which concludes the proof.
\end{proof}

\begin{lemma}\label{lem:S_bound}
    Let $S_i\ind\text{Wishart}(\Sigma_i, \nu)$ where the $\Sigma_i$'s are
    $q\times q$ symmetric positive-definite matrices. If
    $\limsup_{p\to\infty} p^{-1}\sum_{i=1}^p (\tr\Sigma_i)^4 < \infty$, 
    then $E(\max_{1 \leq i \leq p}\|S_i\|^2) = O(q^2p^{1/2}(\log p)^2 +
    q^2p^{1/2}(\log q)^2)$.
\end{lemma}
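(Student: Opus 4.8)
The plan is to reduce the Frobenius norm to the trace of $S_i$ and then to a maximum of i.i.d.\ chi-squared variables. Since each $S_i$ is positive semidefinite, its eigenvalues $\ell_{i,1},\dots,\ell_{i,q}$ are nonnegative, so $\|S_i\|^2 = \tr(S_i^2) = \sum_{j=1}^q \ell_{i,j}^2 \le \big(\sum_{j=1}^q \ell_{i,j}\big)^2 = (\tr S_i)^2$. Hence it suffices to show $E\big[(\max_{1\le i\le p}\tr S_i)^2\big] = O\big(q^2 p^{1/2}(\log p)^2 + q^2 p^{1/2}(\log q)^2\big)$.

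First I would stochastically dominate $\tr S_i$. Write $S_i = \sum_{k=1}^\nu Z_{ik}Z_{ik}^T$ with $Z_{ik}\iid N_q(0,\Sigma_i)$, and $Z_{ik} = \Sigma_i^{1/2}W_{ik}$ with $W_{ik}\iid N_q(0,I_q)$. Writing $\lambda_{\max}(\Sigma_i)$ for the largest eigenvalue of $\Sigma_i$ and using $\lambda_{\max}(\Sigma_i)\le\tr\Sigma_i$,
\[
\tr S_i = \sum_{k=1}^\nu \|\Sigma_i^{1/2}W_{ik}\|^2 \le \lambda_{\max}(\Sigma_i)\sum_{k=1}^\nu\|W_{ik}\|^2 \le (\tr\Sigma_i)\,H_i,
\]
where $H_i := \sum_{k=1}^\nu\|W_{ik}\|^2 \sim \chi^2_{\nu q}$ and the $H_i$ are independent across $i$. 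Then $\max_i\tr S_i \le \big(\max_i\tr\Sigma_i\big)\big(\max_i H_i\big)$, and since the first factor is deterministic, $E\big[(\max_i\tr S_i)^2\big] \le \big(\max_i\tr\Sigma_i\big)^2\,E\big[(\max_i H_i)^2\big]$.

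It remains to bound the two factors. The hypothesis $\limsup_p p^{-1}\sum_i(\tr\Sigma_i)^4<\infty$ gives $\sum_i(\tr\Sigma_i)^4 = O(p)$, so $\big(\max_i\tr\Sigma_i\big)^2 = \max_i(\tr\Sigma_i)^2 \le \big(\sum_i(\tr\Sigma_i)^4\big)^{1/2} = O(p^{1/2})$. For $E\big[(\max_i H_i)^2\big]$ with $H_1,\dots,H_p$ i.i.d.\ $\chi^2_{\nu q}$, I would invoke the standard chi-squared deviation bound $P\big(\chi^2_{\nu q} > \nu q + 2\sqrt{\nu q\,t}+2t\big)\le e^{-t}$, which yields $P(H_i > u)\le e^{-(u-\nu q)/4}$ for $u\ge 5\nu q$; a union bound gives $P\big(\max_i H_i > u\big)\le p\,e^{-(u-\nu q)/4}$. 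Writing $E\big[(\max_i H_i)^2\big] = \int_0^\infty 2u\,P(\max_i H_i > u)\,du$ and splitting at $u_0 := 5\nu q + 4\log p$ (chosen so $p\,e^{-(u_0-\nu q)/4}\le 1$), the portion below $u_0$ is at most $u_0^2$ and the portion above is $O(u_0)$ by the geometric decay of the integrand, so $E\big[(\max_i H_i)^2\big] = O\big((\nu q+\log p)^2\big) = O\big(\nu^2 q^2 + (\log p)^2\big)$. Combining, $E\big[\max_i\|S_i\|^2\big] = O\big(\nu^2 q^2 p^{1/2} + p^{1/2}(\log p)^2\big)$; since $\nu$ is fixed and $q\ge 1$ and $\log p\ge 1$ for large $p$, this is dominated by $q^2 p^{1/2}(\log p)^2 + q^2 p^{1/2}(\log q)^2$, which is the stated bound (the $(\log q)^2$ term being slack).

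The calculation is routine apart from the tail integral, where $u_0$ must be chosen large enough that the union bound is nontrivial yet still of order $\nu q + \log p$, so that squaring it costs only $(\log p)^2$; that is the one place to be careful. (An alternative to the chi-squared tail bound is a moment argument: by~\eqref{eq:moment_tr} together with $\sum_\kappa C_\kappa(\Sigma_i)=(\tr\Sigma_i)^{2m}$ and an elementary bound on the generalized hypergeometric coefficients, one gets $E[(\tr S_i)^{2m}]=O\big(((\nu+m)\tr\Sigma_i)^{2m}\big)$, hence $E[(\max_i\tr S_i)^2]\le p^{1/m}(\nu+O(m))^2\big(\max_i\tr\Sigma_i\big)^2$, and taking $m\asymp\log p$ gives the same rate; this variant additionally needs $\nu\ge q-1$, as holds in the intended application with $\nu=n-1$, so that the coefficients are nonnegative.)
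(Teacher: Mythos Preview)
Your argument is correct and takes a genuinely different route from the paper's. The paper works coordinate-wise: it represents $S_i$ through Gaussian vectors, expands $\|S_i\|^2=\|X_i\|^4$ as a sum of at most $q^2$ terms of the form $X_{i,j}^2X_{i,k}^2$, bounds each by $\max_j X_{i,j}^4$, factorizes $X_{i,j}^4=\Sigma_{i,jj}^2 Z_{i,j}^4$ with standardized $Z_{i,j}$, and then controls $\max_{i,j}\Sigma_{i,jj}^2=O(p^{1/2})$ deterministically and $E[\max_{i,j}Z_{i,j}^4]=O((\log p+\log q)^2)$; this last step over the $pq$ standardized Gaussians is exactly where the $(\log q)^2$ in the stated bound arises. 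You instead pass to the trace via $\|S_i\|^2\le(\tr S_i)^2$, stochastically dominate $\tr S_i\le(\tr\Sigma_i)H_i$ with $H_i$ i.i.d.\ $\chi^2_{\nu q}$, and then use a Laurent--Massart tail bound plus the layer-cake formula to get $E[(\max_i H_i)^2]=O((\nu q+\log p)^2)$. Your route handles the degrees of freedom $\nu$ cleanly and sidesteps any question about dependence among the standardized coordinates $Z_{i,j}$; the trade-off is that in your final bound the $(\log q)^2$ term is pure slack (you actually obtain $O(p^{1/2}(\nu^2q^2+(\log p)^2))$, already inside the stated order for fixed $\nu$), whereas in the paper's argument that term is genuinely used. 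Either way the lemma follows.
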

\begin{proof}
    Write $S_i = X_iX_i^T$ where $X_i\ind N_q(0, \Sigma_i)$. Then
    $\|S_i\|^2=\|X_i\|^4$. From Lemma~\ref{lem:xie}, we have $E(\max_{1\leq j
    \leq q}X_{i,j}^2) = O(p^{2/3})$ and $E(\max_{1\leq i \leq p, 1\leq j \leq
q}X_{i,j}^2) = O(q^{2/3}p^{2/3})$. Let $X_{i,j} = \Sigma_{i,j}^{1/2}Z_{i,j}$
where $Z_{i,j}\iid N(0,1)$. Then $X_{i,j}^4=\Sigma_{i,jj}^2Z_{i,j}$ and
\[
    \max_{1 \leq i \leq p, 1\leq j\leq q}X_{i,j}^4 \leq \max_{1 \leq i \leq p, q \leq
j \leq q}\Sigma_{i,jj}^2\cdot\max_{ 1\leq i \leq p, 1 \leq j \leq q}Z_{i,j}.
\]
Since
\begin{align*}
    \max_{1\leq i \leq p, 1 \leq j \leq q}\Sigma^4_{i,jj} & < \max_{1 \leq i \leq
    p} \tr(\Sigma_i)^4 < \sum_{i=1}^p\tr(\Sigma_i)^4 = O(p)
\end{align*}
implies $\max_{1\leq i \leq p, 1 \leq j \leq q}\Sigma_{i,jj}^2 = O(p^{1/2})$ and
\begin{align*}
    E\Big(\max_{1\leq i \leq p, 1 \leq j \leq q}Z_{i,j}^4\Big) & = O((\log p +
    \log q)^2), 
\end{align*}
we have
\begin{align*}
    E\Big(\max_{1\leq i \leq p, 1 \leq j \leq q}X_{i,j}^4\Big) & \leq \max_{1 \leq
        i \leq p, q \leq j \leq q}\Sigma_{i,jj}^2 E\Big(\max_{ 1\leq i \leq p, 1 \leq
    j \leq q}Z_{i,j}\Big) = O(p^{1/2}(\log p + \log q)^2).
\end{align*}

Then
    \begin{align*}
        E\Big(\max_{1 \leq i \leq p}\|S_i\|^2\Big) & = E\Big(\max_{1 \leq i \leq
        p}\|X_i\|^4\Big) = E\Bigg[\max_{1 \leq i \leq
p}\Bigg(\sum_{j=1}^qX_{i,j}^2\Bigg)^2\Bigg]\\
        & = E\Bigg[\max_{1\leq i \leq p}\Bigg(\sum_{j=1}^qX_{i,j}^4 +
        \sum_{j\neq k}X_{i,j}^2X_{i,k}^2\Bigg)\Bigg]\\
        & \leq qE\Big(\max_{1\leq i \leq p, 1\leq j \leq q}
        X_{i,j}^4\Big)+q(q-1)E\Big(\max_{1 \leq i \leq p, 1 \leq j \leq
    q}X_{i,j}^4\Big)\\
        & \leq q^2O(p^{1/2}(\log p + \log q)^2)\\
        & = O(q^2p^{1/2}(\log p)^2 + q^2p^{1/2}(\log q)^2)
    \end{align*}

\end{proof}

\setcounter{theorem}{4}
\begin{theorem}\label{thm3}
    Assume the following conditions:
\begin{enumerate}
    \item[(i)] $\limsup_{p\to\infty} p^{-1}\sum_{i=1}^p \big(\tr \Sigma_i\big)^4 < \infty$,
    \item[(ii)] $\limsup_{p\to\infty} p^{-1}\sum_{i=1}^p
        \widetilde{M}^T_i\Sigma_i\widetilde{M}_i < \infty$,
    \item[(iii)] $\limsup_{p\to\infty} p^{-1}\sum_{i=1}^p \Vert \! \log M_i \Vert^{2+\delta} < \infty$ for some $\delta > 0$.
\end{enumerate}
Then
\[
    \sup_{\substack{\lambda > 0, \nu > q+1, \|\Psi\|\leq \max_{1 \leq i \leq
                p}\|S_i\|,\\ \|\log\mu\|\leq \max_{1\leq i\leq
    p}\|\log\bar{X}_i\|}} \Big|\text{SURE}(\lambda, \Psi, \nu, \mu)-
    L\Big(\big(\widehat{\bd{M}}^{\lambda,\mu},
            \widehat{\bd{\Sigma}}^{\Psi, \nu}\big), \big(\bd{M},
    \bd{\Sigma}\big)\Big)\Big| \stackrel{\text{prob}}{\longrightarrow} 0\quad
    \text{as }p \to \infty.
\]
\end{theorem}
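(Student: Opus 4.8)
The plan is to split $L = L_1 + L_2$ and $\text{SURE} = \text{SURE}_1 + \text{SURE}_2$, where $\text{SURE}_1(\lambda,\mu)=p^{-1}\sum_i(\lambda+n)^{-2}\big[\tfrac{n-\lambda^2/n}{n-1}\tr S_i+\lambda^2 d^2_{\text{LE}}(\bar X_i,\mu)\big]$ and $L_1 = L_1(\widehat{\bd M}^{\lambda,\mu},\bd M)$ depend only on $(\lambda,\mu)$, while $\text{SURE}_2(\Psi,\nu)$ and $L_2 = L_2(\widehat{\bd\Sigma}^{\Psi,\nu},\bd\Sigma)$ depend only on $(\Psi,\nu)$. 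Since the feasible set is a product, it is enough to prove $\sup_{\lambda>0,\ \|\!\log\mu\|\le\max_i\|\!\log\bar X_i\|}|\text{SURE}_1-L_1|\xrightarrow{P}0$ and $\sup_{\nu>q+1,\ \|\Psi\|\le\max_i\|S_i\|}|\text{SURE}_2-L_2|\xrightarrow{P}0$ separately. Throughout write $\bar\varepsilon_i=\widetilde{\bar X_i}-\widetilde M_i$; by the definition of the Log-Normal distribution the $\bar\varepsilon_i$ are independent with $\bar\varepsilon_i\sim N_q(0,\Sigma_i/n)$, and each $\bar\varepsilon_i$ is independent of $S_i$.

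For the mean part, since $\log\widehat M_i^{\lambda,\mu}=(\lambda+n)^{-1}(n\log\bar X_i+\lambda\log\mu)$ and vectorization is linear, $d^2_{\text{LE}}(\widehat M_i^{\lambda,\mu},M_i)=(\lambda+n)^{-2}\|n\bar\varepsilon_i+\lambda(\widetilde\mu-\widetilde M_i)\|^2$ and $d^2_{\text{LE}}(\bar X_i,\mu)=\|\bar\varepsilon_i+(\widetilde M_i-\widetilde\mu)\|^2$; the deterministic terms in $\widetilde M_i-\widetilde\mu$ cancel, leaving
\[
\text{SURE}_1-L_1=a(\lambda)\,p^{-1}\!\sum_i\tr S_i+b(\lambda)\,p^{-1}\!\sum_i\|\bar\varepsilon_i\|^2+\tfrac{2\lambda}{\lambda+n}\,p^{-1}\!\sum_i\bar\varepsilon_i^{T}(\widetilde M_i-\widetilde\mu),
\]
where $a(\lambda)=\tfrac{n-\lambda^2/n}{(n-1)(\lambda+n)^2}$ and $b(\lambda)=\tfrac{\lambda-n}{\lambda+n}$ satisfy $|a(\lambda)|,|b(\lambda)|\le C_n$ for all $\lambda>0$ and, crucially, $(n-1)a(\lambda)+n^{-1}b(\lambda)\equiv 0$. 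By Chebyshev's inequality and condition~(i) (each summand has variance $O((\tr\Sigma_i)^2)$), $p^{-1}\sum_i\tr S_i-(n-1)p^{-1}\sum_i\tr\Sigma_i=o_P(1)$ and $p^{-1}\sum_i\|\bar\varepsilon_i\|^2-n^{-1}p^{-1}\sum_i\tr\Sigma_i=o_P(1)$, so by the displayed identity the first two terms sum to $o_P(1)$ uniformly in $\lambda$. For the cross term, $p^{-1}\sum_i\bar\varepsilon_i^{T}\widetilde M_i$ has variance $(np^2)^{-1}\sum_i\widetilde M_i^{T}\Sigma_i\widetilde M_i=O(1/p)$ by~(ii), while $\sup_{\|\!\log\mu\|\le\max_i\|\!\log\bar X_i\|}\big|\big(p^{-1}\sum_i\bar\varepsilon_i\big)^{T}\widetilde\mu\big|\le\max_i\|\!\log\bar X_i\|\cdot\big\|p^{-1}\sum_i\bar\varepsilon_i\big\|$; here $\big\|p^{-1}\sum_i\bar\varepsilon_i\big\|=O_P(p^{-1/2})$, and applying Lemma~\ref{lem:LN} to $\bar X_i\sim\text{LN}(M_i,\Sigma_i/n)$ (whose hypotheses follow from (i)--(iii)) gives $\max_i\|\!\log\bar X_i\|=O_P(p^{1/(2+\delta^*)})$, so the product is $o_P(1)$ since $\delta^*>0$. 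This settles the mean part.

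For the covariance part put $\gamma=\nu+n-q-2\in(n-1,\infty)$. Because $\text{SURE}_2$ was built by substituting unbiased estimators into $R_2$, $E\,\text{SURE}_2=R_2=E\,L_2$, so $\text{SURE}_2-L_2=(\text{SURE}_2-R_2)-(L_2-R_2)$ is a difference of two centered averages. Expanding $\|\widehat\Sigma_i^{\Psi,\nu}-\Sigma_i\|^2=\gamma^{-2}\|(\Psi-(\nu-q-1)\Sigma_i)+(S_i-(n-1)\Sigma_i)\|^2$ and collecting terms, $\text{SURE}_2-L_2$ is a finite linear combination — with coefficients rational in $\nu$ (for $n,q$ fixed) and uniformly bounded on $\nu>q+1$, since $\gamma>n-1$ and $|\nu-q-1|<\gamma$ — of the centered variables $\tr(S_i^2)-E\tr(S_i^2)$, $(\tr S_i)^2-E(\tr S_i)^2$, $\tr(S_i\Sigma_i)-E\tr(S_i\Sigma_i)$ and $\tr\big(\Psi(S_i-ES_i)\big)$ (the deterministic $\tr(\Psi^2)$ terms cancel). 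For the first three (which do not involve $\Psi$), $S_i\sim\text{Wishart}_q(\Sigma_i,n-1)$ and $S_i\succeq 0$, so $\tr(S_i^2)\le(\tr S_i)^2$ and $\tr(S_i\Sigma_i)\le(\tr\Sigma_i)\tr S_i$; together with $E(\tr S_i)^2=O((\tr\Sigma_i)^2)$ and $E(\tr S_i)^4=O((\tr\Sigma_i)^4)$ this makes each of their variances $O((\tr\Sigma_i)^4)$, so the corresponding centered averages have variance $O\!\big(p^{-2}\sum_i(\tr\Sigma_i)^4\big)=O(1/p)$ by~(i) and contribute $o_P(1)$ to the supremum; here $E(\tr S_i)^4=O((\tr\Sigma_i)^4)$ follows from the zonal-polynomial moment identity~\eqref{eq:moment_tr}, Lemma~\ref{lem:zonal} (all zonal polynomials of a positive-definite matrix are positive), and $\sum_\kappa C_\kappa(\Sigma_i)=(\tr\Sigma_i)^4$. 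For the $\Psi$-linear term, keep $\Psi$ (common to all $i$) outside the sum: $\big|p^{-1}\sum_i\tr(\Psi(S_i-ES_i))\big|\le\|\Psi\|\cdot\big\|p^{-1}\sum_i(S_i-ES_i)\big\|\le\max_i\|S_i\|\cdot\big\|p^{-1}\sum_i(S_i-ES_i)\big\|$; now $E\big\|p^{-1}\sum_i(S_i-ES_i)\big\|^2=p^{-2}\sum_iE\|S_i-ES_i\|^2=O\!\big(p^{-2}\sum_i(\tr\Sigma_i)^2\big)=O(1/p)$ by the elementary Wishart moments and~(i), so this norm is $O_P(p^{-1/2})$, while Lemma~\ref{lem:S_bound} (with $q$ fixed) gives $\max_i\|S_i\|=O_P(p^{1/4}\log p)$, and the product is $O_P(p^{-1/4}\log p)=o_P(1)$. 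Combining the two parts proves the theorem.

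The delicate point — and the reason the proof does not proceed along familiar lines — is controlling the variances of $\tr(S_i^2)$ and $(\tr S_i)^2$, which are fourth-order polynomials in the Wishart matrix $S_i$, whereas the standard Wishart identities in Section~\ref{sec:preliminary} reach only second-order quantities. The device that rescues us is the positive-semidefiniteness inequality $\tr(S_i^2)\le(\tr S_i)^2$ combined with \eqref{eq:moment_tr} and Lemma~\ref{lem:zonal}, which collapse everything onto the single scalar moment $E(\tr S_i)^4=O((\tr\Sigma_i)^4)$ — this is precisely why condition~(i) is imposed on $p^{-1}\sum_i(\tr\Sigma_i)^4$ rather than a lower power. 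A secondary difficulty is that the feasible sets for $\Psi$ and $\mu$ grow with $p$, which forces us to control $\max_i\|S_i\|$ (via Lemma~\ref{lem:S_bound}) and $\max_i\|\!\log\bar X_i\|$ (via Lemma~\ref{lem:LN}) and to exploit that $\Psi$ and $\mu$ are common across $i$, so that the $\Psi$- and $\mu$-linear pieces reduce to averages that are small by independence.
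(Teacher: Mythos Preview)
Your proof is correct and follows essentially the same route as the paper's: the same split $L=L_1+L_2$, $\text{SURE}=\text{SURE}_1+\text{SURE}_2$, the same key lemmas (Lemma~\ref{lem:LN} for $\max_i\|\log\bar X_i\|$, Lemma~\ref{lem:S_bound} for $\max_i\|S_i\|$), and the same zonal-polynomial bound $E(\tr S_i)^4=O((\tr\Sigma_i)^4)$ to control the Wishart fourth moments. Your organization is somewhat more compact than the paper's --- for the mean part you exploit the algebraic identity $(n-1)a(\lambda)+n^{-1}b(\lambda)\equiv 0$ to cancel the two trace terms at once (the paper instead treats the three pieces \eqref{eq:M_1}--\eqref{eq:M_3} separately), and for the covariance part you collapse the paper's five-term decomposition \eqref{eq:S_1}--\eqref{eq:S_5} into the single observation that $\text{SURE}_2-L_2$ is a sum of centered variables with $\nu$-independent summands and uniformly bounded $\nu$-dependent coefficients --- but the mathematical content is the same.
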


\begin{proof}
    First, we write the loss function $L$ as
    \begin{align*}
     L\Big(\big(\widehat{\bd{M}}^{\lambda,\mu},
            \widehat{\bd{\Sigma}}^{\Psi, \nu}\big), \big(\bd{M},
    \bd{\Sigma}\big)\Big) & = p^{-1}\sum_{i=1}^p
    d^2_{\text{LE}}(\widehat{M}^{\lambda,\mu}_i, M_i) +
    \|\widehat{\Sigma}^{\Psi,\nu}_i-\Sigma_i\|^2\\
    & = L_1\Big(\widehat{\bd{M}}^{\lambda,\mu}, \bd{M}\Big) +
    L_2\Big(\widehat{\bd{\Sigma}}^{\Psi,\nu}, \bd{\Sigma}\Big),\\
    \end{align*}
    where
    \begin{align*}
        L_1\Big(\widehat{\bd{M}}^{\lambda,\mu}, \bd{M}\Big) & = p^{-1}\sum_{i=1}^p\Big\| (\lambda+n)^{-1}\Big(n\Big(\log
    \bar{X}_i - \log M_i\Big) + \lambda\Big(\log \mu - \log
    M_i\Big)\Big)\Big\|^2 \\
    & = p^{-1}\sum_{i=1}^p(\lambda+n)^{-2}\Big[n^2d^2_{\text{LE}}(\bar{X}_i,
    M_i) + \lambda^2d^2_{\text{LE}}(\mu, M_i)\\
    & \quad + 2n\lambda\big\langle\log
\bar{X}_i-\log M_i, \log \mu - \log M_i \big\rangle\Big],\\
L_2\Big(\widehat{\bd{\Sigma}}^{\Psi,\nu},\bd{\Sigma}\Big) & = p^{-1}\sum_{i=1}^p
(\nu+n-q-2)^{-2}\Big\|\big(\Psi-(\nu-q-1)\Sigma_i\big) +
\big(S_i-(n-1)\Sigma_i\big)\Big\|^2\\
    & = p^{-1}\sum_{i=1}^p (\nu+n-q-2)^{-2}\Big[\tr\big(\Psi^2\big) -
    2(\nu-q-1)\tr\big(\Psi\Sigma_i\big) + (\nu-q-1)^2\tr\big(\Sigma_i^2\big)\\
    & \quad + \tr\big(S_i^2\big) - 2(n-1)\tr\big(S_i\Sigma_i\big) +
(n-1)^2\tr\big(\Sigma_i^2\big)\\
    & \quad + 2\langle \Psi-(\nu-q-1)\Sigma_i,S_i-(n-1)\Sigma_i\rangle\Big].
    \end{align*}
Write the SURE as 
\begin{align*}
    \text{SURE}(\lambda, \mu, \Psi, \nu) & = \text{SURE}_1(\lambda, \mu) + \text{SURE}_2(\Psi, \nu),            
\end{align*}
where 
\begin{align*}
    \text{SURE}_1(\lambda,\mu) & =
    p^{-1}\sum_{i=1}^p(\lambda+n)^{-2}\Big[\frac{n-\lambda^2/n}{n-1}\tr
    S_i + \lambda^2d^2_{\text{LE}}(\bar{X}_i, \mu)\Big],\\
    \text{SURE}_2(\Psi, \nu) & = p^{-1}\sum_{i=1}^p (\nu + n - q -
    2)^{-2}\Bigg[\frac{n-3+(\nu-q-1)^2}{(n+1)(n-2)}\tr\big(S_i^2\big)\\
    & \quad + \frac{(n-1)^2-(\nu-q-1)^2}{(n-2)(n-1)(n+1)}\big(\tr
S_i\big)^2-2\frac{\nu-q-1}{n-1}\tr\big(\Psi S_i\big)
+ \tr \big(\Psi^2\big)\Bigg].
\end{align*}
Since 
\begin{multline*}
    \sup_{\substack{\lambda > 0, \nu > q+1, \|\Psi\|\leq \max_{1\leq i \leq
                p}\|S_i\|,\\ \|\log\mu\|\leq \max_{1\leq i\leq
    p}\|\log\bar{X}_i\|}} \Big|\text{SURE}(\lambda, \Psi, \nu, \mu)-
    L\Big(\big(\widehat{\bd{M}}^{\lambda,\mu},
            \widehat{\bd{\Sigma}}^{\Psi, \nu}\big), \big(\bd{M},
    \bd{\Sigma}\big)\Big)\Big| \leq \\
    \sup_{\lambda>0, \|\log\mu\|\leq \max_{1\leq i \leq p}\|\log \bar{X}_i
    \|}\big|\text{SURE}_1(\lambda,\mu) - L_1\big(\widehat{\bd{M}}^{\lambda,\mu},
\bd{M}\big)\big| \\
+ \sup_{\nu>q+1, \|\Psi\|\leq \max_{1\leq i \leq p}\|S_i\|} \big|\text{SURE}_2(\Psi,\nu) -
    L_2\big(\widehat{\bd{\Sigma}}^{\Psi,\nu}, \bd{\Sigma}\big)\big|, 
\end{multline*}
it suffices to show the two terms on the right-hand side converge to 0 in probability. For
the first term, 
\begin{align}
    |\text{SURE}_1(\lambda, \mu) -
    L_1\big(\widehat{\bd{M}}^{\lambda,\mu},\bd{M}\big)| & = \Bigg|
    p^{-1}\sum_{i=1}^p(\lambda+n)^{-2}\Bigg[\frac{n}{n-1}\tr S_i -
    n^2d^2_{\text{LE}}(\bar{X}_i, M_i) \nonumber \\
    & \quad + \lambda^2\Big(\frac{\tr S_i}{n(n-1)} +
    d^2_{\text{LE}}(\bar{X}_i, \mu)-d^2_{\text{LE}}(M_i, \mu)\Big) \nonumber \\
    & \quad + 2n\lambda\Big\langle \log \bar{X}_i-\log M_i, \log \mu - \log
    M_i\Big\rangle\Bigg]\Bigg| \nonumber \\ 
    & \leq \Bigg| p^{-1}\sum_{i=1}^p(\lambda+n)^{-2}\Big(\frac{n}{n-1}\tr S_i -
    n^2d^2_{\text{LE}}(\bar{X}_i, M_i) \Big)\Bigg| \label{eq:M_1} \\
    & \quad + \Bigg| p^{-1}\sum_{i=1}^p\frac{\lambda^2}{(\lambda+n)^2}
    \Big(\frac{\tr S_i}{n(n-1)} + d^2_{\text{LE}}(\bar{X}_i,
    \mu)-d^2_{\text{LE}}(M_i, \mu) \Big)\Bigg| \label{eq:M_2} \\
    & \quad + \Bigg|
    p^{-1}\sum_{i=1}^p\frac{2n\lambda}{(\lambda+n)^2}\Big\langle \log
    \bar{X}_i-\log M_i, \log \mu - \log M_i\Big\rangle \Bigg|. \label{eq:M_3}
\end{align}
We will now prove the convergence of each of the three terms individually.

For \eqref{eq:M_1}, from assumption (i), we have 
\begin{align*}
    & \quad \var \Bigg( p^{-1}\sum_{i=1}^p\Big(\frac{n}{n-1}\tr S_i -
    n^2d^2_{\text{LE}}(\bar{X}_i, M_i) \Big)\Bigg)\\
    & = \frac{1}{p} p^{-1}\sum_{i=1}^p \var \Big(\frac{n}{n-1}\tr S_i -
    n^2d^2_{\text{LE}}(\bar{X}_i, M_i) \Big)\\
    & = \frac{1}{p}p^{-1}\sum_{i=1}^pE\Big(\frac{n}{n-1}\tr S_i -
    n^2d^2_{\text{LE}}(\bar{X}_i,M_i)\Big)^2\\
    & = \frac{n^2}{p}p^{-1}\sum_{i=1}^p \Big[ \frac{E\big(\tr
            S_i\big)^2}{(n-1)^2} + n^2Ed^4_{\text{LE}}(\bar{X}_i,M_i) -
            2\frac{n}{n-1}E\big(\tr
    S_i\big)Ed^2_{\text{LE}}(\bar{X}_i, M_i)\Big]\\  
    & = \frac{n^2}{p}p^{-1}\sum_{i=1}^p \Big[ \frac{n+1}{n-1}\big(\tr
        \Sigma_i\big)^2 + 4\frac{\tr_2\Sigma_i}{n-1} +
            \big(\tr\Sigma_i\big)^2 + 2\tr\big(\Sigma_i^2\big) -
            2\big(\tr\Sigma_i\big)^2\Big]\\  
    & = \frac{n^2}{p}p^{-1}\sum_{i=1}^p \Big[ \frac{2}{n-1}\big(\tr
        \Sigma_i\big)^2 + \frac{4}{n-1}\tr_2\Sigma_i +
        2\tr\big(\Sigma_i^2\big)\Big] \stackrel{p \to \infty}{\longrightarrow}
        0.  
\end{align*}
Then, by Markov's inequality,
\[
\Bigg| p^{-1}\sum_{i=1}^p\Big(\frac{n}{n-1}\tr S_i - n^2d^2_{\text{LE}}(\bar{X}_i, M_i) \Big)\Bigg| \stackrel{\text{prob}}{\longrightarrow} 0 \quad \text{as }p \to \infty.
\]
Thus
\begin{align}\label{eq:M_1_pf}
    & \quad \sup_{\lambda>0}\Bigg|p^{-1}\sum_{i=1}^p(\lambda+n)^{-2}\Big(\frac{n}{n-1}\tr S_i -
    n^2d^2_{\text{LE}}(\bar{X}_i, M_i)\Big)\Bigg| \nonumber \\
    & = \Big( \sup_{\lambda>0}(\lambda+n)^{-2}\Big) \Bigg| p^{-1}\sum_{i=1}^p\Big(\frac{n}{n-1}\tr S_i - n^2d^2_{\text{LE}}(\bar{X}_i, M_i) \Big)\Bigg| \nonumber \\
    & = \frac{1}{n^2} \Bigg| p^{-1}\sum_{i=1}^p\Big(\frac{n}{n-1}\tr S_i - n^2d^2_{\text{LE}}(\bar{X}_i, M_i) \Big)\Bigg|
    \stackrel{\text{prob}}{\longrightarrow} 0 \quad \text{as }p \to \infty. 
\end{align}
\begin{remark}
    By the identity~\eqref{eq:trace}, assumption (i) implies
    $\limsup_{p\to\infty}p^{-1}\sum_{i=1}^p \tr \big(\Sigma_i^2\big) < \infty$
    and $\limsup_{p\to\infty}p^{-1}\sum_{i=1}^p \tr_2\Sigma_i < \infty$.
\end{remark}

For~\eqref{eq:M_2},
\begin{align*}
    & \quad \sup_{\lambda>0, \|\log\mu\|\leq \max_{1\leq i\leq p}\|\log\bar{X}_i\|}\Bigg| p^{-1}\sum_{i=1}^p\frac{\lambda^2}{(\lambda+n)^2}
    \Big(\frac{\tr S_i}{n(n-1)} + d^2_{\text{LE}}(\bar{X}_i,
    \mu)-d^2_{\text{LE}}(M_i, \mu) \Big)\Bigg|\\
    & = \sup_{\lambda>0, \|\log\mu\|\leq \max_{1\leq i\leq p}\|\log\bar{X}_i\|}\Bigg| p^{-1}\sum_{i=1}^p\frac{\lambda^2}{(\lambda+n)^2}
    \Big(\frac{\tr S_i}{n(n-1)} + \|\log\bar{X}_i\|^2 - \|\log M_i\|^2\\
    & \qquad + 2\langle \log \bar{X}_i - \log M_i, \log \mu\rangle
\Big)\Bigg|\\
    & \leq \sup_{\lambda>0}\Bigg| p^{-1}\sum_{i=1}^p\frac{\lambda^2}{(\lambda+n)^2}
    \Big(\frac{\tr S_i}{n(n-1)} + \|\log\bar{X}_i\|^2 - \|\log M_i\|^2\Big)\Bigg|\\
    & \qquad + \sup_{\lambda>0, \|\log\mu\|\leq \max_{1\leq i\leq p}\|\log\bar{X}_i\|}
    \Bigg| p^{-1}\frac{2\lambda^2}{(\lambda+n)^2}
    \Big\langle \sum_{i=1}^p \log \bar{X}_i - \log M_i, \log \mu\Big\rangle\Bigg|\\
    & \leq \Big(\sup_{\lambda>0}\frac{\lambda^2}{(\lambda+n)^2}\Big)\Bigg| p^{-1}\sum_{i=1}^p
    \Big(\frac{\tr S_i}{n(n-1)} + \|\log\bar{X}_i\|^2 - \|\log M_i\|^2\Big)\Bigg|\\
    & \qquad + \sup_{\lambda>0, \|\log\mu\|\leq \max_{1\leq i\leq p}\|\log\bar{X}_i\|}
    \Bigg| p^{-1}\frac{2\lambda^2}{(\lambda+n)^2}
    \|\!\log\mu\|
    \Bigg\| \sum_{i=1}^p \big(\log\bar{X}_i-\log M_i \big)\Bigg\|\Bigg|\\
    & \pushright{\text{(By Cauchy's inequality)}}\\
    & \leq \Bigg| p^{-1}\sum_{i=1}^p
    \Big(\frac{\tr S_i}{n(n-1)} + \|\log\bar{X}_i\|^2 - \|\log M_i\|^2\Big)\Bigg|\\
    & \qquad + \Bigg|p^{-1}\max_{1\leq i\leq p}\|\log\bar{X}_i\|
    \Bigg\| \sum_{i=1}^p \big(\log\bar{X}_i-\log M_i \big)\Bigg\|\Bigg|
\end{align*}
Since by assumptions (i) and (ii), 
\begin{align*}
    & \quad \var \Bigg( p^{-1}\sum_{i=1}^p
    \Big(\frac{\tr S_i}{n(n-1)} + \|\log\bar{X}_i\|^2 - \|\log M_i\|^2\Big)\Bigg)\\
    & = p^{-2}\sum_{i=1}^p\var \Big(\frac{\tr
    S_i}{n(n-1)} + \|\log\bar{X}_i\|^2 - \|\log M_i\|^2\Big)\\
    & = p^{-2}\sum_{i=1}^p E \Big(\frac{\tr
    S_i}{n(n-1)} + \|\log\bar{X}_i\|^2 - \|\log M_i\|^2\Big)^2\\
    & = p^{-2}\sum_{i=1}^p \Bigg[
        \Bigg(\frac{n+1}{n^2(n-1)}\big(\tr\Sigma_i\big)^2-\frac{4}{n^2(n-1)}\tr_2\Sigma_i\Bigg)
        \Bigg(+ \frac{\big(\tr\Sigma_i\big)^2}{n^2} +
            \frac{2\tr\big(\Sigma_i^2\big)}{n^2}\\
    & \qquad + \frac{4}{n}\widetilde{M}_i^T\Sigma_i\widetilde{M}_i +
\frac{2}{n}\|\log M_i\|^2\tr\Sigma_i + \|\log M_i \|^4\Bigg) + \|\log M_i \|^4\\ 
    & \qquad + \frac{2}{n}\tr\Sigma_i\Big(\frac{1}{n}\tr\Sigma_i + \|\log M_i
\|^2\Big) -2 \Big(\frac{1}{n}\tr\Sigma_i + \|\log M_i \|^2\Big)\|\log M_i\|^2 -
\frac{2}{n}\tr\Sigma_i\|\log M_i\|^2\Bigg]\\
    & = p^{-2}\sum_{i=1}^p \Bigg[
    \frac{4n-2}{n^2(n-1)}\big(\tr\Sigma_i\big)^2 -
\frac{4}{n^2(n-1)}\tr_2\Sigma_i + \frac{2}{n}\tr\big(\Sigma_i^2\big) +
\frac{4}{n}\widetilde{M}_i^T\Sigma_i\widetilde{M}_i \Bigg]
\stackrel{p\to\infty}{\longrightarrow} 0,
\end{align*}
we have 
\begin{align*}
    \Bigg| p^{-1}\sum_{i=1}^p \Big(\frac{\tr S_i}{n(n-1)} + \|\log\bar{X}_i\|^2 - \|\log
    M_i\|^2\Big)\Bigg| \stackrel{\text{prob}}{\longrightarrow} 0 \quad \text{as
}p \to \infty
\end{align*}
by Markov's inequality. Since by Lemma~\ref{lem:LN},
\begin{align*}
    & \quad E \Bigg[ \frac{2}{p} \max_{1\leq i\leq p}\|\log\bar{X}_i\| \Bigg\| \sum_{i=1}^p
    \big(\log\bar{X}_i-\log M_i \big)\Bigg\|\Bigg]\\
    & \leq \frac{2}{p}\Bigg[ E\Big(\max_{1\leq i \leq
    p}\|\log\bar{X}_i\|^2\Big)E \Bigg\| \sum_{i=1}^p\big(\log \bar{X}_i - \log
    M_i\big)\Bigg\|^2\Bigg]^{1/2}\\
    & = O(p^{-1})\times O(p^{1/(2+\delta^*)}) \times O(p^{1/2})\\
    & = O(p^{-\delta^*/(4+2\delta^*)}),
\end{align*}
we have 
\begin{multline}
    \sup_{\lambda>0, \|\log\mu\|\leq \max_{1 \leq i \leq p}\|\log\bar{X}_i\|}
    \Bigg| p^{-1}\sum_{i=1}^p\frac{\lambda^2}{(\lambda+n)^2} \Big(\frac{\tr
    S_i}{n(n-1)} + d^2_{\text{LE}}(\bar{X}_i, \mu)-d^2_{\text{LE}}(M_i, \mu)
\Big)\Bigg| \stackrel{\text{prob}}{\longrightarrow} 0\\ \quad \text{as
}p\to\infty.\label{eq:M_2_pf}
\end{multline}

For~\eqref{eq:M_3}, we have
\begin{align*}
    & \quad \sup_{\lambda>0,\|\log\mu\|\leq\max_{1\leq i\leq p}\|\log\bar{X}_i\|}\Bigg|p^{-1}\sum_{i=1}^p\frac{2n\lambda}{(\lambda+n)^2}\Big\langle \log
    \bar{X}_i-\log M_i, \log \mu - \log M_i\Big\rangle \Bigg|\\
    & \leq
    \sup_{\lambda>0}\Bigg|p^{-1}\frac{2n\lambda}{(\lambda+n)^2}\max_{1\leq i \leq p}\|\log\bar{X}_i\|\Bigg\|\sum_{i=1}^p\big(
\log\bar{X}_i-\log M_i\big)\Bigg\|\Bigg|\\
    & \qquad +
    \sup_{\lambda>0}\Bigg|p^{-1}\sum_{i=1}^p\frac{2n\lambda}{(\lambda+n)^2}\Big\langle
    \log \bar{X}_i-\log M_i, \log M_i\Big\rangle \Bigg|\\
    & = \Bigg|\frac{1}{2p}\max_{1\leq i \leq p}\|\log\bar{X}_i\|\Bigg\|\sum_{i=1}^p\big(
\log\bar{X}_i-\log M_i\big)\Bigg\|\Bigg| +
    \Bigg|\frac{1}{2p}\sum_{i=1}^p\Big\langle
    \log \bar{X}_i-\log M_i, \log M_i\Big\rangle \Bigg|
\end{align*}
since $\sup_{\lambda>0}2n\lambda/(\lambda+n)^2 = 1/2$.
By assumption (ii), we have
\begin{align*}
    & \quad \var\Bigg[ p^{-1}\sum_{i=1}^p\Big\langle
    \log \bar{X}_i-\log M_i, \log M_i\Big\rangle\Bigg]\\
    & = p^{-2}\sum_{i=1}^pE\Big\langle
    \log \bar{X}_i-\log M_i, \log M_i\Big\rangle^2\\
    & = p^{-2}\sum_{i=1}^pE\widetilde{M}_i^T
    \Big[ \Big(\widetilde{\bar{X}}_i-\widetilde{M}_i\Big)
    \Big(\widetilde{\bar{X}}_i-\widetilde{M}_i\Big)^T \Big]\widetilde{M}_i\\
    & = p^{-2}\sum_{i=1}^p\frac{1}{n}\widetilde{M}_i^T
    \Sigma_i\widetilde{M}_i \stackrel{p \to \infty}{\longrightarrow} 0
\end{align*}
and again by Markov's inequality,
\begin{align*}
    \Bigg|\frac{1}{2p}\sum_{i=1}^p\Big\langle
    \log \bar{X}_i-\log M_i, \log M_i\Big\rangle \Bigg|
    \stackrel{\text{prob}}{\longrightarrow} 0 \quad \text{as }p \to \infty.
\end{align*}
Thus, 
\begin{align}
    \sup_{\lambda>0,\|\log\mu\|\leq\max_{1\leq i\leq p}\|\log\bar{X}_i\|}\Bigg|p^{-1}\sum_{i=1}^p\frac{2n\lambda}{(\lambda+n)^2}\Big\langle \log
    \bar{X}_i-\log M_i, \log \mu - \log M_i\Big\rangle \Bigg| \stackrel{\text{prob}}{\longrightarrow} 0 \quad \text{as
}p\to\infty.\label{eq:M_3_pf}
\end{align}
Combining~\eqref{eq:M_1_pf}, \eqref{eq:M_2_pf}, and \eqref{eq:M_3_pf}, we have
\begin{align}
    \sup_{\lambda>0, \|\log\mu\|\leq \max_{1\leq i \leq p}\|\log \bar{X}_i
    \|}\big|\text{SURE}_1(\lambda,\mu) - L_1\big(\widehat{\bd{M}}^{\lambda,\mu},
\bd{M}\big)\big|  \stackrel{\text{prob}}{\longrightarrow} 0 \quad \text{as
}p\to\infty. \label{eq:M_pf}
\end{align}

For the second term, we have 
\begin{align}
    & \quad |\text{SURE}_2(\Psi, \nu) -
    L_2\big(\widehat{\bd{\Sigma}}^{\Psi,\nu},\bd{\Sigma}\big)|\nonumber\\
    & = \Bigg|p^{-1}\sum_{i=1}^p(\nu+n-q-2)^{-2}\Bigg[2(\nu-q-1)
        \Big(\frac{\tr(\Psi S_i)}{n-1}-\tr(\Psi\Sigma_i)\Big) \nonumber \\
    & \quad + \frac{(n-1)^2-(\nu-q-1)^2}{(n+1)(n-2)(n-1)}(\tr S_i)^2 -
    \frac{(n-1)^2-(\nu-q-1)^2}{(n+1)(n-2)}\tr(S_i^2) \nonumber\\ 
    & \quad +2(n-1)\tr(S_i\Sigma_i) - \big((n-1)^2+(\nu-q-1)^2\big)
    \tr(\Sigma_i^2) \nonumber\\ 
    & \quad - 2\big\langle \Psi-(\nu-q-1)\Sigma_i, S_i - (n-1)\Sigma_i
    \big\rangle\Bigg]\Bigg| \nonumber \\ 
    & \leq \Bigg|
    p^{-1}\sum_{i=1}^p\frac{2(\nu-q-1)(n-1)}{(\nu+n-q-2)^2}\langle\Psi,
    S_i-(n-1)\tr\Sigma_i \rangle\Bigg| \label{eq:S_1} \\
    & \quad + \Bigg| p^{-1}\sum_{i=1}^p C(\nu)\Big[(\tr
    S_i)^2-(n-1)^2(\tr\Sigma_i)^2 -
    2(n-1)\tr(\Sigma_i^2) \Big]\Bigg|\label{eq:S_2}\\
    & \quad + \Bigg| p^{-1}\sum_{i=1}^p (n-1)C(\nu)\Big[\tr
        (S_i^2) - (n-1)(\tr\Sigma_i)^2 -
    n(n-1)\tr(\Sigma_i^2) \Big]\Bigg|\label{eq:S_3}\\
    & \quad + \Bigg| p^{-1}\sum_{i=1}^p 2(n-1)\Big[\tr
        (S_i\Sigma_i) - (n-1)\tr(\Sigma_i^2) \Big]\Bigg|\label{eq:S_4}\\
    & \quad + \Bigg|
    p^{-1}\sum_{i=1}^p\frac{2\big\langle \Psi-(\nu-q-1)\Sigma_i, S_i - (n-1)\Sigma_i
    \big\rangle}{(\nu+n-q-2)^2}\Bigg|. \label{eq:S_5}
\end{align}
where 
\begin{align*}
    C(\nu) =  (\nu+n-q-2)^{-2}\frac{(n-1)^2-(\nu-q-1)^2}{(n+1)(n-2)(n-1)}.
\end{align*}
Note that $\sup_{\nu>q-1}C(\nu) = [(n+1)(n-2)(n-1)]^{-1}$.

For~\eqref{eq:S_1}, by Lemma~\ref{lem:S_bound}, we have
\begin{align*}
    & \quad \sup_{\nu > q+1,\|\Psi\|\leq \max_{1\leq i \leq p}\|S_i\|}\Bigg|
    p^{-1}\sum_{i=1}^p\frac{2(\nu-q-1)(n-1)}{(\nu+n-q-2)^2} \big\langle \Psi,
    S_i - (n-1)\Sigma_i \big\rangle\Bigg|\\ 
    & \leq \sup_{\nu>q+1}\Bigg(\frac{2(\nu-q-1)(n-1)}{(\nu+n-q-2)^2}p^{-1}\max_{1\leq i \leq
    p}\|S_i\|\Bigg\|\sum_{i=1}^pS_i-(n-1)\Sigma_i\Bigg\|\Bigg)\\
    & = \frac{2(n-1)^2}{(n-2)^2}\Bigg(p^{-1}\max_{1\leq i \leq
    p}\|S_i\|\Bigg\|\sum_{i=1}^pS_i-(n-1)\Sigma_i\Bigg\|\Bigg)
\end{align*}
and
\begin{align*}
    & \quad E\Bigg[ \frac{1}{p}\max_{1 \leq i \leq
    p}\|S_i\|\Bigg\|\sum_{i=1}^pS_i-(n-1)\Sigma_i\Bigg\|\Bigg]\\
    & \leq \frac{1}{p}\Bigg[ E\Big(\max_{i\leq i \leq p}\|S_i\|^2\Big)
    E\Bigg\|\sum_{i=1}^pS_i-(n-1)\Sigma_i\Bigg\|^2\Bigg]^{1/2}\\
    & = O(p^{-1})\times O(qp^{1/4}\log p + qp^{1/4}\log q) \times O(p^{1/2})\\
    & = O(p^{-1/4}\log p) = o(1).
\end{align*}
Hence, we have 
\begin{align}
    \sup_{\nu>q+1, \|\Psi\| \leq \max_{1\leq i \leq p}\|S_i\|}\Bigg|
    p^{-1}\sum_{i=1}^p\frac{2(\nu-q-1)(n-1)}{(\nu+n-q-2)^2} \big\langle \Psi,
    S_i - (n-1)\Sigma_i \big\rangle\Bigg| \stackrel{\text{prob}}{\longrightarrow} 0
    \quad \text{as }p \to \infty.\label{eq:S_1_pf}
\end{align}

For~\eqref{eq:S_2}, we have
\begin{align*}
    & \quad \sup_{\nu>q+1} \Bigg| p^{-1}\sum_{i=1}^p C(\nu)\Big[(\tr
    S_i)^2-(n-1)^2(\tr\Sigma_i)^2 -
    2(n-1)\tr(\Sigma_i^2) \Big]\Bigg|\\
    & = \frac{1}{(n+1)(n-2)(n-1)}\Bigg| p^{-1}\sum_{i=1}^p \Big[(\tr
    S_i)^2-(n-1)^2(\tr\Sigma_i)^2 -
    2(n-1)\tr(\Sigma_i^2) \Big]\Bigg|
\end{align*}
and 
\begin{align*}
    & \quad \var\Bigg[p^{-1}\sum_{i=1}^p \Big[(\tr
    S_i)^2-(n-1)^2(\tr\Sigma_i)^2 - 2(n-1)\tr(\Sigma_i^2) \Big]\Bigg]\\
    & = p^{-2}\sum_{i=1}^p \var\Big((\tr
    S_i)^2-(n-1)^2(\tr\Sigma_i)^2 - 2(n-1)\tr(\Sigma_i^2) \Big)\\
    & = p^{-2}\sum_{i=1}^p E\Big((\tr
    S_i)^2-(n-1)^2(\tr\Sigma_i)^2 - 2(n-1)\tr(\Sigma_i^2) \Big)^2\\
    & = p^{-2}\sum_{i=1}^p \Big[E(\tr
    S_i)^4 - \Big((n-1)^2(\tr\Sigma_i)^2 + 2(n-1)\tr(\Sigma_i^2)\Big)^2\Big]\\
    & = p^{-2}\sum_{i=1}^p
    \Big[2^4\sum_{\kappa}\Big(\frac{n-1}{2}\Big)_\kappa C_\kappa(\Sigma_i)
    - (n-1)^4(\tr\Sigma_i)^4\\
    & \quad - 4(n-1)^3(\tr\Sigma_i)^2\tr(\Sigma_i^2) - 4(n-1)^2
    \big(\tr(\Sigma_i^2)\big)^2 \Big]\stackrel{p \to \infty}{\longrightarrow} 0
    \quad \text{(by~\eqref{eq:trace2})}.
\end{align*}
By Markov's inequality, we have
\begin{align}
    \Bigg| p^{-1}\sum_{i=1}^p \Big[(\tr S_i)^2-(n-1)^2(\tr\Sigma_i)^2 - 2(n-1)\tr(\Sigma_i^2) \Big]\Bigg|\stackrel{\text{prob}}{\longrightarrow} 0 \quad \text{as }p \to \infty.\label{eq:S_2_pf}
\end{align}
\begin{remark}
    By Lemma~\ref{lem:zonal}, assumption (i) implies
    $\limsup_{p\to\infty}p^{-1}\sum_{i=1}^p C_\kappa(\Sigma_i) < \infty$ for all
    partitions $\kappa=(k_1,\ldots,k_q)$ with $\sum_{j=1}^q k_j \leq 4$.  
\end{remark}

For~\eqref{eq:S_3}, we have
\begin{align*}
    & \quad \sup_{\nu>q+1} \Bigg| p^{-1}\sum_{i=1}^p (n-1)C(\nu)\Big[\tr(
        S_i^2)-(n-1)(\tr\Sigma_i)^2 -
    n(n-1)\tr(\Sigma_i^2) \Big]\Bigg|\\
    & = \frac{1}{(n+1)(n-2)}\Bigg| p^{-1}\sum_{i=1}^p\Big[\tr(
        S_i^2)-(n-1)(\tr\Sigma_i)^2 -
    n(n-1)\tr(\Sigma_i^2) \Big]\Bigg|
\end{align*}
and
\begin{align*}
    & \quad \var\Bigg[p^{-1}\sum_{i=1}^p \Big[\tr
    (S_i^2)-(n-1)(\tr\Sigma_i)^2 - n(n-1)\tr(\Sigma_i^2) \Big]\Bigg]\\
    & = p^{-2}\sum_{i=1}^p \var\Big(\tr
    (S_i^2)-(n-1)(\tr\Sigma_i)^2 - n(n-1)\tr(\Sigma_i^2) \Big)\\
    & = p^{-2}\sum_{i=1}^p E\Big(\tr
    (S_i^2)-(n-1)(\tr\Sigma_i)^2 - n(n-1)\tr(\Sigma_i^2) \Big)^2\\
    & = p^{-2}\sum_{i=1}^p \Big[E(\tr
    (S_i^2))^2 - \Big((n-1)(\tr\Sigma_i)^2 + n(n-1)\tr(\Sigma_i^2)\Big)^2\Big]\\
    & \leq p^{-2}\sum_{i=1}^p
    \Big[E(\tr S_i)^4
    - (n-1)^2(\tr\Sigma_i)^4\\
    & \quad - 2n(n-1)^2(\tr\Sigma_i)^2\tr(\Sigma_i^2) - n^2(n-1)^2
    \big(\tr(\Sigma_i^2)\big)^2 \Big]\stackrel{p \to \infty}{\longrightarrow} 0
    \quad \text{(by~\eqref{eq:trace2})}.
\end{align*}
By Markov's inequality, we have
\begin{align}
    \Bigg| p^{-1}\sum_{i=1}^p\Big[\tr( S_i^2)-(n-1)(\tr\Sigma_i)^2 - n(n-1)\tr(\Sigma_i^2) \Big]\Bigg|\stackrel{\text{prob}}{\longrightarrow} 0 \quad \text{as }p \to \infty.\label{eq:S_3_pf}
\end{align}

For~\eqref{eq:S_4}, by Cauchy's inequality, we have
\begin{align*}
    \Bigg| p^{-1}\sum_{i=1}^p 2(n-1)\Big[\tr (S_i\Sigma_i) -
    (n-1)\tr(\Sigma_i^2) \Big]\Bigg|
    & = \Bigg| p^{-1}\sum_{i=1}^p 2(n-1)\langle\Sigma_i, S_i-(n-1)\Sigma_i\rangle\Bigg|\\
    & \leq 2(n-1)p^{-1}\sum_{i=1}^p|\langle\Sigma_i, S_i-(n-1)\Sigma_i\rangle|\\
    & \leq 2(n-1)p^{-1}\sum_{i=1}^p\|\Sigma_i\|\|S_i-(n-1)\Sigma_i\|.
\end{align*}
Then 
\begin{align*}
    & \quad \var\Bigg(2(n-1)p^{-1}\sum_{i=1}^p\|\Sigma_i\|\|S_i-(n-1)\Sigma_i\|\Bigg)\\
    & = \frac{4(n-1)^2}{p}p^{-1}\sum_{i=1}^p
    \var\big(\|\Sigma_i\|\|S_i-(n-1)\Sigma_i\|\big)\\ 
    & \leq \frac{4(n-1)^2}{p}p^{-1}\sum_{i=1}^p
    E\big(\|\Sigma_i\|^2\|S_i-(n-1)\Sigma_i\|^2\big)\\ 
    & = \frac{4(n-1)^2}{p}p^{-1}\sum_{i=1}^p
    \tr(\Sigma_i^2)\Big[E\tr(S_i^2) - 2(n-1)E\tr(S_i\Sigma_i) +
    (n-1)^2\tr(\Sigma_i^2)\Big]\\ 
    & = \frac{4(n-1)^2}{p}p^{-1}\sum_{i=1}^p
    \tr(\Sigma_i^2)\Big[n(n-1)\tr(\Sigma_i^2) + (n-1)(\tr\Sigma_i)^2 -
        2(n-1)^2\tr(\Sigma_i^2) + (n-1)^2\tr(\Sigma_i^2)\Big]\\ 
    & = \frac{4(n-1)^3}{p}p^{-1}\sum_{i=1}^p\big(\tr(\Sigma_i^2)\big)^2 +
    \tr(\Sigma_i^2)(\tr\Sigma_i)^2 \stackrel{p \to \infty}{\longrightarrow} 0.
\end{align*}
By Markov's inequality, we have
\begin{align}
    \Bigg| p^{-1}\sum_{i=1}^p 2(n-1)\Big[\tr (S_i\Sigma_i) -
    (n-1)\tr(\Sigma_i^2) \Big]\Bigg|\stackrel{\text{prob}}{\longrightarrow} 0
    \quad \text{as }p \to \infty.\label{eq:S_4_pf}
\end{align}

For~\eqref{eq:S_5}, we have 
\begin{align*}
    & \quad \sup_{\nu>q+1, \|\Psi\|\leq \max_{1\leq i \leq p}\|S_i\|}\Bigg|
    p^{-1}\sum_{i=1}^p\frac{2\big\langle \Psi-(\nu-q-1)\Sigma_i, S_i - (n-1)\Sigma_i
    \big\rangle}{(\nu+n-q-2)^2}\Bigg|\\
    & \leq \sup_{\nu>q+1, \|\Psi\|\leq \max_{1\leq i \leq p}\|S_i\|}\Bigg|
    p^{-1}\sum_{i=1}^p\frac{2\big\langle \Psi, S_i - (n-1)\Sigma_i
    \big\rangle}{(\nu+n-q-2)^2}\Bigg|\\
    & \qquad + \sup_{\nu>q+1}\Bigg|
    p^{-1}\sum_{i=1}^p\frac{2(\nu-q-1)\big\langle \Sigma_i, S_i - (n-1)\Sigma_i
    \big\rangle}{(\nu+n-q-2)^2}\Bigg|\\
    & \leq \sup_{\|\Psi\|\leq \max_{1\leq i \leq p}\|S_i\|}\Bigg|
    p^{-1}\sum_{i=1}^p2\big\langle \Psi, S_i - (n-1)\Sigma_i
    \big\rangle\Bigg| + \Bigg|
    p^{-1}\sum_{i=1}^p2\big\langle \Sigma_i, S_i - (n-1)\Sigma_i
    \big\rangle\Bigg|.
\end{align*}
Since by assumption (i)
\begin{align*}
    & \quad\var\Bigg(p^{-1}\sum_{i=1}^p2\big\langle \Sigma_i, S_i - (n-1)\Sigma_i
    \big\rangle\Bigg)\\
    & = \frac{1}{p^2}\sum_{i=1}^p E\big[\langle \Sigma_i,
    S_i-(n-1)\Sigma_i\rangle^2\big]\\ 
    & \leq \frac{1}{p^2}\sum_{i=1}^p E\big[\|\Sigma_i\|^2
    \|S_i-(n-1)\Sigma_i\|^2\big]\\ 
    & = \frac{1}{p^2}\sum_{i=1}^p \|\Sigma_i\|^2
    E\big[\tr(S_i^2)-2(n-1)\tr(S_i\Sigma_i) + (n-1)^2\tr(\Sigma_i^2)\big]\\ 
    & = \frac{1}{p^2}\sum_{i=1}^p \tr(\Sigma_i^2)
    \Big[(n-1)\tr(\Sigma_i^2) + (n-1)(\tr\Sigma_i)^2\Big]\\ 
    & \leq \frac{n-1}{p^2}\sum_{i=1}^p(\tr\Sigma_i)^4
    \stackrel{p\to\infty}{\longrightarrow} 0,
\end{align*}
by Markov's inequality, we have 
\begin{align*}
    \Bigg|p^{-1}\sum_{i=1}^p2\big\langle \Sigma_i, S_i -
    (n-1)\Sigma_i\big\rangle\Bigg| \stackrel{\text{prob}}{\longrightarrow} 0
    \quad \text{as }p \to \infty.
\end{align*}
Similarly, we have 
\begin{align*}
    \sup_{\|\Psi\|\leq \max_{1\leq i \leq p}\|S_i\|}\Bigg|
    p^{-1}\sum_{i=1}^p2 \big\langle \Psi,
    S_i - (n-1)\Sigma_i \big\rangle\Bigg| & \leq  
    \frac{2}{p}\max_{1\leq i \leq
    p}\|S_i\|\Bigg\|\sum_{i=1}^pS_i-(n-1)\Sigma_i\Bigg\|
\end{align*}
and
\begin{align*}
    E\Bigg[ \frac{2}{p}\max_{1\leq i \leq
    p}\|S_i\|\Bigg\|\sum_{i=1}^pS_i-(n-1)\Sigma_i\Bigg\|\Bigg] & = o(1).
\end{align*}
Hence, we have 
\begin{align}
    \sup_{\nu>q+1, \|\Psi\|\leq \max_{1\leq i \leq p}\|S_i\|}\Bigg|
    p^{-1}\sum_{i=1}^p\frac{2\big\langle \Psi-(\nu-q-1)\Sigma_i, S_i - (n-1)\Sigma_i
    \big\rangle}{(\nu+n-q-2)^2}\Bigg|
     \stackrel{\text{prob}}{\longrightarrow} 0
    \quad \text{as }p \to \infty.\label{eq:S_5_pf}
\end{align}

Combining \eqref{eq:S_1_pf}, \eqref{eq:S_2_pf}, \eqref{eq:S_3_pf},
\eqref{eq:S_4_pf}, and \eqref{eq:S_5_pf}, we have 
\begin{align}
    \sup_{\nu>q+1, \|\Psi\|\leq \max_{1\leq i \leq p}\|S_i
    \|}\big|\text{SURE}_2(\nu,\Psi) - L_2\big(\widehat{\bd{\Sigma}}^{\nu,\Psi},
    \bd{\Sigma}\big)\big|  \stackrel{\text{prob}}{\longrightarrow} 0 \quad \text{as
    }p\to\infty. \label{eq:S_pf}
\end{align}
The proof is concluded by \eqref{eq:M_pf} and \eqref{eq:S_pf}.
\end{proof}

\begin{theorem}\label{thm4}
    If assumptions (i), (ii), and (iii) in Theorem~\ref{thm3} hold, then
    \begin{align*}
        \lim_{p\to\infty} \Bigg[ R\Big(\big( \widehat{\bd{M}}^{\text{SURE}},
        \widehat{\bd{\Sigma}}^{\text{SURE}} \big), \big( \bd{M},
        \bd{\Sigma} \big)\Big) - R\Big(\big(
        \widehat{\bd{M}}^{\lambda,\mu}, \widehat{\bd{\Sigma}}^{\Psi,
        \nu} \big), \big( \bd{M}, \bd{\Sigma} \big)\Big)\Bigg] \leq 0.       
    \end{align*}
\end{theorem}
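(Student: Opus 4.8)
The plan is to obtain Theorem~\ref{thm4} from Theorem~\ref{thm3} together with two structural facts already established: first, that by the construction in Section~\ref{sec:derivation_risk} the quantity $\text{SURE}$ is an \emph{unbiased} estimate of the risk, i.e.\ $E\big[\text{SURE}(\lambda,\Psi,\nu,\mu)\big]=R\big((\widehat{\bd M}^{\lambda,\mu},\widehat{\bd\Sigma}^{\Psi,\nu}),(\bd M,\bd\Sigma)\big)$ for every \emph{fixed} hyperparameter vector $(\lambda,\Psi,\nu,\mu)$; and second, that $(\widehat\lambda^{\text{SURE}},\widehat\Psi^{\text{SURE}},\widehat\nu^{\text{SURE}},\widehat\mu^{\text{SURE}})$ is, by definition, a minimizer of $(\lambda,\Psi,\nu,\mu)\mapsto\text{SURE}(\lambda,\Psi,\nu,\mu)$ over the constraint region $\Theta_p=\{\lambda>0,\ \nu>q+1,\ \|\Psi\|\le\max_{1\le i\le p}\|S_i\|,\ \|\!\log\mu\|\le\max_{1\le i\le p}\|\!\log\bar X_i\|\}$ appearing in Theorem~\ref{thm3}. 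For brevity write $L(\theta)$ and $R(\theta)=E[L(\theta)]$ for the loss and risk of the estimator~\eqref{eqn:est_M_sigma} built from $\theta=(\lambda,\Psi,\nu,\mu)$, let $\widehat\theta$ denote the SURE minimizer, and put $\Delta_p=\sup_{\theta\in\Theta_p}|\text{SURE}(\theta)-L(\theta)|$, the random variable that Theorem~\ref{thm3} shows tends to $0$ in probability. Since $R=EL$, it suffices to show $R(\widehat\theta)\le R(\theta)+o(1)$ for each fixed $\theta$ with $\lambda>0$, $\nu>q+1$.

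The heart of the argument is the chain of inequalities
\begin{align*}
R(\widehat\theta)=E\big[L(\widehat\theta)\big]
&\le E\big[\text{SURE}(\widehat\theta)\big]+E[\Delta_p]\\
&\le E\big[\text{SURE}(\theta)\big]+E[\Delta_p]
= R(\theta)+E[\Delta_p],
\end{align*}
in which the first inequality uses $\widehat\theta\in\Theta_p$ (so that $|L(\widehat\theta)-\text{SURE}(\widehat\theta)|\le\Delta_p$ pointwise), the second uses the minimizing property of $\widehat\theta$ together with $\theta\in\Theta_p$, and the final equality is the unbiasedness of SURE. The second step requires the fixed vector $\theta$ to lie in $\Theta_p$; for fixed $\Psi$ and $\mu$ this holds with probability tending to $1$, since $\max_i\|S_i\|$ and $\max_i\|\!\log\bar X_i\|$ --- maxima of $p\to\infty$ nondegenerate variables --- diverge in probability, and on the vanishing exceptional event the extra contribution is $o(1)$ by Cauchy--Schwarz once one has a mild, $p$-uniform moment bound on $\sup_{\theta\in\Theta_p}\text{SURE}(\theta)$, which is of order $\max_i\|S_i\|^2+\max_i\|\!\log\bar X_i\|^2$ up to lower-order averages and is controlled by the arguments behind Lemmas~\ref{lem:LN} and~\ref{lem:S_bound}.

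Everything therefore reduces to the single claim $E[\Delta_p]\to0$, which is just the $L^1$ strengthening of the conclusion of Theorem~\ref{thm3} and is read off from its proof: there $\Delta_p$ is bounded by a \emph{finite} sum of terms, each of which is either (a) a centered sample mean $p^{-1}\sum_{i=1}^pZ_i$ whose variance is shown to be $o(1)$, hence converging to $0$ in $L^2\subseteq L^1$, or (b) a term of the form $p^{-1}\big(\max_{1\le i\le p}\|\cdot\|\big)\big\|\sum_{i=1}^p(\cdot)\big\|$ whose \emph{expectation} is already bounded by an explicit $o(1)$ quantity via Cauchy--Schwarz, Lemma~\ref{lem:LN} for the $\log\bar X_i$ contributions and Lemma~\ref{lem:S_bound} for the $S_i$ contributions; summing the finitely many $o(1)$ pieces gives $E[\Delta_p]=o(1)$, and then $R(\widehat\theta)-R(\theta)\le E[\Delta_p]+o(1)\to0$ as required. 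I expect the main obstacle to lie not in Theorem~\ref{thm4} itself but upstream, in Theorem~\ref{thm3} and Lemmas~\ref{lem:LN}--\ref{lem:S_bound}, namely in controlling moments of traces of Wishart matrices of order higher than two; given those, the only care needed here is to verify that the estimates in the proof of Theorem~\ref{thm3} control first moments and not merely probabilities, and to dispose of the low-probability event on which a fixed oracle escapes the data-dependent set $\Theta_p$.
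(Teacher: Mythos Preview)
Your argument follows essentially the same route as the paper's: both decompose via $\text{SURE}$, use the minimizing property of $\widehat\theta$, and reduce everything to the uniform bound $\Delta_p=\sup_{\theta\in\Theta_p}|\text{SURE}(\theta)-L(\theta)|$ from Theorem~\ref{thm3}. The paper writes the pointwise inequality $L(\widehat\theta)-L(\theta)\le 2\Delta_p$ and then passes to expectations by invoking ``dominated convergence'' without naming a dominating function; you instead take expectations term by term, exploit the unbiasedness $E[\text{SURE}(\theta)]=R(\theta)$ for fixed $\theta$ (so only one $E[\Delta_p]$ appears rather than two), and argue directly that $E[\Delta_p]\to 0$ by revisiting the term-by-term estimates in the proof of Theorem~\ref{thm3}. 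This is the more honest way to close the gap, since convergence in probability alone does not license the limit-expectation interchange. You also flag and handle the data-dependence of $\Theta_p$ (i.e.\ that a fixed competitor $\theta$ need not lie in $\Theta_p$ for every realization), a point the paper's proof passes over in silence. In short: same skeleton, with you supplying the uniform-integrability and feasibility details that the paper leaves implicit.
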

\begin{proof}
    Since
    \begin{align*}
        & \quad L\Big(\big(\widehat{\bd{M}}^{\text{SURE}},
        \widehat{\bd{\Sigma}}^{\text{SURE}}\big), \big(\bd{M},
        \bd{\Sigma}\big)\Big) - L\Big(\big(\widehat{\bd{M}}^{\lambda,\mu},
        \widehat{\bd{\Sigma}}^{\Psi, \nu}\big), \big(\bd{M},
        \bd{\Sigma}\big)\Big)\\
        & = L\Big(\big(\widehat{\bd{M}}^{\text{SURE}},
        \widehat{\bd{\Sigma}}^{\text{SURE}}\big), \big(\bd{M},
        \bd{\Sigma}\big)\Big) - \text{SURE}(\widehat{\lambda}^{\text{SURE}},
        \widehat{\mu}^{\text{SURE}}, \widehat{\Psi}^{\text{SURE}},
        \widehat{\nu}^{\text{SURE}})\\
        & \qquad + \text{SURE}(\widehat{\lambda}^{\text{SURE}},
        \widehat{\mu}^{\text{SURE}}, \widehat{\Psi}^{\text{SURE}},
        \widehat{\nu}^{\text{SURE}}) - \text{SURE}(\lambda, \mu, \Psi,\nu)\\
        & \qquad + \text{SURE}(\lambda, \mu, \Psi,\nu) -
        L\Big(\big(\widehat{\bd{M}}^{\lambda,\mu}, \widehat{\bd{\Sigma}}^{\Psi,
        \nu}\big), \big(\bd{M}, \bd{\Sigma}\big)\Big)\\
        & \leq \sup_{\lambda,\mu,\Psi,\nu} \Big|
        L\Big(\big(\widehat{\bd{M}}^{\lambda,\mu}, \widehat{\bd{\Sigma}}^{\Psi,
        \nu}\big), \big(\bd{M}, \bd{\Sigma}\big)\Big) -
        \text{SURE}(\lambda,\mu,\Psi,\nu)\Big|\\
        & \qquad + 0\\
        & \qquad + \sup_{\lambda,\mu,\Psi,\nu} \Big|
        L\Big(\big(\widehat{\bd{M}}^{\lambda,\mu}, \widehat{\bd{\Sigma}}^{\Psi,
        \nu}\big), \big(\bd{M}, \bd{\Sigma}\big)\Big) -
        \text{SURE}(\lambda,\mu,\Psi,\nu)\Big|\\
        & = 2 \sup_{\lambda,\mu,\Psi,\nu} \Big|
        L\Big(\big(\widehat{\bd{M}}^{\lambda,\mu}, \widehat{\bd{\Sigma}}^{\Psi,
        \nu}\big), \big(\bd{M}, \bd{\Sigma}\big)\Big) -
        \text{SURE}(\lambda,\mu,\Psi,\nu)\Big|,
    \end{align*}
    from Theorem~\ref{thm3}, we have
    \[
        \lim_{p \to \infty} \Big[L\Big(\big(\widehat{\bd{M}}^{\text{SURE}},
        \widehat{\bd{\Sigma}}^{\text{SURE}}\big), \big(\bd{M},
        \bd{\Sigma}\big)\Big) - L\Big(\big(\widehat{\bd{M}}^{\lambda,\mu},
        \widehat{\bd{\Sigma}}^{\Psi, \nu}\big), \big(\bd{M},
        \bd{\Sigma}\big)\Big)\Big] \leq 0. 
    \]
    Hence, by dominated convergence, we have
    \begin{align*}
        & \quad \lim_{p\to\infty} \Big[R\Big(\big(
        \widehat{\bd{M}}^{\text{SURE}},
        \widehat{\bd{\Sigma}}^{\text{SURE}} \big), \big( \bd{M},
        \bd{\Sigma} \big)\Big) - R\Big(\big(
        \widehat{\bd{M}}^{\lambda,\mu}, \widehat{\bd{\Sigma}}^{\Psi,
        \nu} \big), \big( \bd{M}, \bd{\Sigma} \big)\Big)\Big]\\
        & = \lim_{p \to \infty}E\Big[L\Big(\big(\widehat{\bd{M}}^{\text{SURE}},
        \widehat{\bd{\Sigma}}^{\text{SURE}}\big), \big(\bd{M},
        \bd{\Sigma}\big)\Big) - L\Big(\big(\widehat{\bd{M}}^{\lambda,\mu},
        \widehat{\bd{\Sigma}}^{\Psi, \nu}\big), \big(\bd{M},
        \bd{\Sigma}\big)\Big)\Big]\\
        & = E\Bigg\{\lim_{p \to \infty} \Big[L\Big(\big(\widehat{\bd{M}}^{\text{SURE}},
        \widehat{\bd{\Sigma}}^{\text{SURE}}\big), \big(\bd{M},
        \bd{\Sigma}\big)\Big) - L\Big(\big(\widehat{\bd{M}}^{\lambda,\mu},
        \widehat{\bd{\Sigma}}^{\Psi, \nu}\big), \big(\bd{M},
        \bd{\Sigma}\big)\Big)\Big]\Bigg\}\\
        & \leq 0.       
    \end{align*}
\end{proof}
    
\section{Implementation Details}
Note that to find the shrinkage estimators $(\widehat{\bd{M}}^{\lambda,\mu}, \widehat{\bd{\Sigma}}^{\Psi, \nu})$, we need to solve the optimization problem
\[
\min_{\lambda, \mu, \Psi, \nu} \text{SURE}(\lambda, \mu, \Psi, \nu)
\]
which is a non-convex optimization problem. Hence the solution depends heavily on the initialization of the minimization algorithm. In this section, we provide a way to choose the initialization so that, in our experiments, the algorithm converges successfully in less than 10 iterations. We can compute the marginal expectations
\begin{align}
    E^{\text{LE}}\big(\bar{X}_i^{\text{LE}}\big) & = E_{M_i}^{\text{LE}}\Big[E^{\text{LE}}_{X}\big(\bar{X}_i^{\text{LE}}|M_i\big)\Big] = E_{M_i}^{\text{LE}}[M_i] = \mu \label{eq:marginal_m}\\
    Ed^2_{\text{LE}}(\bar{X}_i^{\text{LE}}, \mu) & = E_{\Sigma_i}\big\{E_{M_i}\big[E_{X}\big(d^2_{\text{LE}}(\bar{X}_i^{\text{LE}}, \mu)|M_i,\Sigma_i\big)|\Sigma_i\big]\big\} \nonumber \\
    & = E_{\Sigma_i}\Big[\Big(1/n+1/\lambda\Big)\tr\Sigma_i\Big] = \Bigg(\frac{1}{n} + \frac{1}{\lambda}\Bigg)\frac{\tr\Psi}{\nu-q-1} \label{eq:marginal_d2}\\
    E(S_i) & = E_{\Sigma_i}[E_{S_i}(S_i|\Sigma_i)] = E_{\Sigma_i}[(n-1)\Sigma_i] = \frac{n-1}{\nu-q-1}\Psi\label{eq:marginal_S}\\
    E(S_i^{-1}) & = E_{\Sigma_i}[E_{S_i}(S_i^{-1}|\Sigma_i)] = E_{\Sigma_i}\Big[\frac{\Sigma_i^{-1}}{n-q-2}\Big] = \frac{\nu}{n-q-2}\Psi^{-1}\label{eq:marginal_Sinv}
\end{align}
where $E^{\text{LE}}(\cdot)$ denotes the Fr\'{e}chet expectation with respect to the Log-Euclidean metric, i.e.\ $E^{\text{LE}}(X) = \exp(E(\log X))$. Thus the hyperparameters can be written as
\begin{align*}
    \mu & = E^{\text{LE}}\big(\bar{X}_i^{\text{LE}}\big)\\
    \lambda & = \frac{nEd^2_{\text{LE}}(\bar{X}_i^{\text{LE}}, \mu)}{\frac{n}{n-1}E(\tr S_i) - Ed^2_{\text{LE}}(\bar{X}_i^{\text{LE}}, \mu)}\quad \text{(by \eqref{eq:marginal_d2} and \eqref{eq:marginal_S})}\\
    \nu & = \frac{q+1}{\frac{n-q-2}{q(n-1)}\tr(E(S_i)E(S_i^{-1})) - 1} + q + 1 \quad \text{(by \eqref{eq:marginal_S} and \eqref{eq:marginal_Sinv})}\\
    \Psi & = \frac{\nu-q-1}{n-1}E(S_i)\quad \text{(by \eqref{eq:marginal_S})}
\end{align*}
and an initialization of the hyperparameters can be obtained by replacing the (Fr\'{e}chet) expectations with the corresponding sample (Fr\'{e}chet) means, i.e.\ 
\begin{align*}
    \mu_0 & = \exp\Bigg(p^{-1}\sum_{i=1}^p\log\bar{X}_i^{\text{LE}}\Bigg)\\
    \lambda_0 & = \frac{np^{-1}\sum_{i=1}^pd^2_{\text{LE}}(\bar{X}_i^{\text{LE}}, \mu_0)}{\frac{n}{p(n-1)}\sum_{i=1}^p\tr S_i - p^{-1}\sum_{i=1}^pd^2_{\text{LE}}(\bar{X}_i^{\text{LE}}, \mu_0)}\\
    \nu_0 & = \frac{q+1}{\frac{n-q-2}{p^2q(n-1)}\tr\Big[(\sum_{i=1}^p S_i)(\sum_{i=1}^pS_i^{-1})\Big] - 1} + q + 1\\
    \Psi_0 & = \frac{\nu_0-q-1}{p(n-1)}\sum_{i=1}^pS_i.
\end{align*}
Note that these initial values can also be viewed as empirical Bayes estimates for $\mu$, $\lambda$, $\nu$, and $\Psi$ obtained by matching moments. However these estimates do not possess the asymptotic optimality as stated in Theorem~\ref{thm3} and Theorem~\ref{thm4} since they are not obtained by minimizing an estimate of the risk function.

\bibliographystyle{agsm}

\bibliography{reference}

@article{arsigny2007geometric,
  title={Geometric means in a novel vector space structure on symmetric positive-definite matrices},
  author={Arsigny, Vincent and Fillard, Pierre and Pennec, Xavier and Ayache, Nicholas},
  journal={SIAM Journal on Matrix Analysis and Applications},
  volume={29},
  number={1},
  pages={328--347},
  year={2007},
  publisher={SIAM}
}

@inproceedings{fletcher2003gaussian,
  title={Gaussian distributions on {L}ie groups and their application to statistical shape analysis},
  author={Fletcher, P Thomas and Joshi, Sarang and Lu, Conglin and Pizer, Stephen M},
  booktitle={Biennial International Conference on Information Processing in Medical Imaging},
  pages={450--462},
  year={2003},
  organization={Springer}
}

@inproceedings{james1961estimation,
  title={Estimation with quadratic loss},
  author={James, William and Stein, Charles},
  booktitle={Proceedings of the {F}ourth {B}erkeley {S}ymposium on {M}athematical {S}tatistics and {P}robability},
  volume={1},
  pages={361--379},
  year={1962},
  publisher={University of California Press}
}

@inproceedings {stein1956,
    AUTHOR = {Stein, Charles},
     TITLE = {Inadmissibility of the usual estimator for the mean of a
              multivariate normal distribution},
 BOOKTITLE = {Proceedings of the {T}hird {B}erkeley {S}ymposium on
              {M}athematical {S}tatistics and {P}robability, 1954--1955,
              vol. {I}},
     PAGES = {197--206},
 PUBLISHER = {University of California Press, Berkeley and Los Angeles},
      YEAR = {1956},
   MRCLASS = {62.0X},
  MRNUMBER = {0084922},
MRREVIEWER = {S. S. Wilks},
}

@article{daniels2001shrinkage,
  title={Shrinkage estimators for covariance matrices},
  author={Daniels, Michael J and Kass, Robert E},
  journal={Biometrics},
  volume={57},
  number={4},
  pages={1173--1184},
  year={2001},
  publisher={Wiley Online Library}
}

@article{xie2016optimal,
  title={Optimal shrinkage estimation of mean parameters in family of distributions with quadratic variance},
  author={Xie, Xianchao and Kou, Samuel C. and Brown, Lawrence D.},
  journal={The Annals of Statistics},
  volume={44},
  number={2},
  pages={564--597},
  year={2016},
  publisher={NIH Public Access}
}

@article{muandet2016kernel,
  title={Kernel mean shrinkage estimators},
  author={Muandet, Krikamol and Sriperumbudur, Bharath and Fukumizu, Kenji and Gretton, Arthur and Sch{\"o}lkopf, Bernhard},
  journal={The Journal of Machine Learning Research},
  volume={17},
  number={1},
  pages={1656--1696},
  year={2016},
  publisher={JMLR. org}
}

@article{feldman2014revisiting,
  title={Revisiting {S}tein's paradox: multi-task averaging},
  author={Feldman, Sergey and Gupta, Maya R and Frigyik, Bela A},
  journal={The Journal of Machine Learning Research},
  volume={15},
  number={1},
  pages={3441--3482},
  year={2014},
  publisher={JMLR. org}
}

@article{efron1973stein,
  title={Stein's estimation rule and its competitors: {A}n empirical {B}ayes approach},
  author={Efron, Bradley and Morris, Carl},
  journal={Journal of the American Statistical Association},
  volume={68},
  number={341},
  pages={117--130},
  year={1973},
  publisher={Taylor \& Francis Group}
}

@phdthesis{schwartzman2006random,
  title={Random Ellipsoids and False Discovery Rates: Statistics for Diffusion
         Tensor Imaging Data},
  author={Schwartzman, Armin},
  year={2006},
  school={Stanford University}
}

@article{schwartzman2016lognormal,
  title={Lognormal distributions and geometric averages of symmetric positive definite matrices},
  author={Schwartzman, Armin},
  journal={International Statistical Review},
  volume={84},
  number={3},
  pages={456--486},
  year={2016},
  publisher={Wiley Online Library}
}

@article{stein1981estimation,
  title={Estimation of the mean of a multivariate normal distribution},
  author={Stein, Charles},
  journal={The Annals of Statistics},
  volume={9},
  number={6},
  pages={1135--1151},
  year={1981},
  publisher={JSTOR}
}

@article{xie2012sure,
  title={{SURE} estimates for a heteroscedastic hierarchical model},
  author={Xie, Xianchao and Kou, S. C. and Brown, Lawrence D.},
  journal={Journal of the American Statistical Association},
  volume={107},
  number={500},
  pages={1465--1479},
  year={2012},
  publisher={Taylor \& Francis}
}

@article{jing2016sure,
  title={On {SURE}-Type Double Shrinkage Estimation},
  author={Jing, Bing-Yi and Li, Zhouping and Pan, Guangming and Zhou, Wang},
  journal={Journal of the American Statistical Association},
  volume={111},
  number={516},
  pages={1696--1704},
  year={2016},
  publisher={Taylor \& Francis}
}

@article{kong2017sure,
  title={{SURE} estimates under dependence and heteroscedasticity},
  author={Kong, Xinbing and Liu, Zhi and Zhao, Peng and Zhou, Wang},
  journal={Journal of Multivariate Analysis},
  volume={161},
  pages={1--11},
  year={2017},
  publisher={Elsevier}
}

@inproceedings{Cheng-AISTATS13,
  title={Recursive {K}archer expectation estimators and geometric law of large numbers},
  author={Ho, Jeffrey and Cheng, Guang and Salehian, Hesamoddin and Vemuri, Baba},
  booktitle={Artificial Intelligence and Statistics},
  pages={325--332},
  year={2013},
  publisher={PMLR}
}

@inproceedings{chakraborty2015recursive,
  title={Recursive {F}r{\'e}chet mean computation on the {G}rassmannian and its applications to computer vision},
  author={Chakraborty, Rudrasis and Vemuri, Baba C},
  booktitle={Proceedings of the IEEE International Conference on Computer Vision},
  pages={4229--4237},
  year={2015}
}

@article{salehian2015efficient,
  title={An efficient recursive estimator of the {F}r{\'e}chet mean on a hypersphere with applications to medical image analysis},
  author={Salehian, Hesamoddin and Chakraborty, Rudrasis and Ofori, Edward and Vaillancourt, David and Vemuri, Baba C},
  journal={Mathematical Foundations of Computational Anatomy},
  volume={3},
  pages={143--154},
  year={2015}
}

@article{chakraborty2019statistics,
  title={Statistics on the {S}tiefel manifold: Theory and applications},
  author={Chakraborty, Rudrasis and Vemuri, Baba C},
  journal={The Annals of Statistics},
  volume={47},
  number={1},
  pages={415--438},
  year={2019},
  publisher={Institute of Mathematical Statistics}
}

@article{Sturm03,
  title={Probability measures on metric spaces of nonpositive curvature},
  author={Sturm, Karl-Theodor},
  journal={Heat Kernels and Analysis on Manifolds, Graphs, and Metric Spaces: Lecture Notes from a Quarter Program on Heat Kernels, Random Walks, and Analysis on Manifolds and Graphs: April 16-July 13, 2002, Emile Borel Centre of the Henri Poincar{\'e} Institute, Paris, France},
  volume={338},
  pages={357--390},
  year={2003},
  publisher={American Mathematical Soc.}
}

@article{behrens2007probabilistic,
  title={Probabilistic diffusion tractography with multiple fibre orientations: What can we gain?},
  author={Behrens, Timothy EJ and Berg, H Johansen and Jbabdi, Saad and Rushworth, Matthew FS and Woolrich, Mark W},
  journal={Neuroimage},
  volume={34},
  number={1},
  pages={144--155},
  year={2007},
  publisher={Elsevier}
}

@inproceedings{robbins1956,
address = "Berkeley, Calif.",
author = "Robbins, Herbert",
booktitle = "Proceedings of the Third Berkeley Symposium on Mathematical Statistics and Probability, Volume 1: Contributions to the Theory of Statistics",
pages = "157--163",
publisher = "University of California Press",
title = "An empirical {B}ayes approach to statistics",
url = "https://projecteuclid.org/euclid.bsmsp/1200501653",
year = "1956"
}

@article{efron2011tweedie,
  title={Tweedie’s formula and selection bias},
  author={Efron, Bradley},
  journal={Journal of the American Statistical Association},
  volume={106},
  number={496},
  pages={1602--1614},
  year={2011},
  publisher={Taylor \& Francis}
}

@article{du2020empirical,
    author = {Lilun Du and Inchi Hu},
    title = {An Empirical Bayes Method for Chi-Squared Data},
    journal = {Journal of the American Statistical Association},
    volume = {0},
    number = {0},
    pages = {1-14},
    year  = {2020},
    publisher = {Taylor & Francis},
    doi = {10.1080/01621459.2020.1777137},
    URL = {https://doi.org/10.1080/01621459.2020.1777137},
    eprint = {https://doi.org/10.1080/01621459.2020.1777137}
}

@article{clevenson1975simultaneous,
  title={Simultaneous estimation of the means of independent {P}oisson laws},
  author={Clevenson, M Lawrence and Zidek, James V},
  journal={Journal of the American Statistical Association},
  volume={70},
  number={351a},
  pages={698--705},
  year={1975},
  publisher={Taylor \& Francis}
}

@article{brown1966admissibility,
  title={On the admissibility of invariant estimators of one or more location parameters},
  author={Brown, Lawrence David},
  journal={The Annals of Mathematical Statistics},
  volume={37},
  number={5},
  pages={1087--1136},
  year={1966},
  publisher={JSTOR}
}

@article{stein1959admissibility,
  title={The admissibility of {P}itman's estimator of a single location parameter},
  author={Stein, Charles},
  journal={The Annals of Mathematical Statistics},
  volume={30},
  number={4},
  pages={970--979},
  year={1959},
  publisher={JSTOR}
}

@article{fienberg1973simultaneous,
  title={Simultaneous estimation of multinomial cell probabilities},
  author={Fienberg, Stephen E and Holland, Paul W},
  journal={Journal of the American Statistical Association},
  volume={68},
  number={343},
  pages={683--691},
  year={1973},
  publisher={Taylor \& Francis Group}
}

@article{mck1971admissible,
  title={On the admissible estimators for certain fixed sample binomial problems},
  author={Johnson, Bruce McK.},
  journal={The Annals of Mathematical Statistics},
  volume={42},
  number={5},
  pages={1579--1587},
  year={1971},
  publisher={JSTOR}
}

@article{efron1971limiting,
  title={Limiting the risk of {B}ayes and empirical {B}ayes estimators, Part {I}: The {B}ayes case},
  author={Efron, Bradley and Morris, Carl},
  journal={Journal of the American Statistical Association},
  volume={66},
  number={336},
  pages={807--815},
  year={1971},
  publisher={Taylor \& Francis Group}
}

@article{efron1972limiting,
  title={Limiting the risk of {B}ayes and empirical {B}ayes estimators, Part {II}: The empirical {B}ayes case},
  author={Efron, Bradley and Morris, Carl},
  journal={Journal of the American Statistical Association},
  volume={67},
  number={337},
  pages={130--139},
  year={1972},
  publisher={Taylor \& Francis Group}
}

@article{efron1972empirical,
  title={Empirical {B}ayes on vector observations: An extension of {S}tein's method},
  author={Efron, Bradley and Morris, Carl},
  journal={Biometrika},
  volume={59},
  number={2},
  pages={335--347},
  year={1972},
  publisher={Oxford University Press}
}

@article{efron1973combining,
  title={Combining possibly related estimation problems},
  author={Efron, Bradley and Morris, Carl},
  journal={Journal of the Royal Statistical Society: Series B },
  volume={35},
  number={3},
  pages={379--402},
  year={1973},
  publisher={Wiley Online Library}
}

@article{tsui1982simultaneous,
  title={Simultaneous estimation of several {P}oisson parameters under $k$-normalized squared error loss},
  author={Tsui, Kam-Wah and Press, S James},
  journal={The Annals of Statistics},
  volume={10},
  number={1},
  pages={93--100},
  year={1982},
  publisher={JSTOR}
}

@article{tsui1981simultaneous,
  title={Simultaneous estimation of several {P}oisson parameters under squared error loss},
  author={Tsui, Kam-Wah},
  journal={Annals of the Institute of Statistical Mathematics},
  volume={33},
  number={1},
  pages={215--223},
  year={1981},
  publisher={Springer}
}

@article{brown2012geometrical,
  title={A geometrical explanation of {S}tein shrinkage},
  author={Brown, Lawrence D and Zhao, Linda H},
  journal={Statistical Science},
  volume={27},
  number={1},
  pages={24--30},
  year={2012},
  publisher={Institute of Mathematical Statistics}
}

@inproceedings{stein1975,
  title={Estimation of a covariance matrix},
  booktitle={Reitz Lecture, 39th Annual Meeting IMS. Atlanta, Georgia},
  author={Stein, Charles},
  year={1975}
}

@article{haff1991variational,
  title={The variational form of certain {B}ayes estimators},
  author={Haff, L. R.},
  journal={The Annals of Statistics},
  volume={19},
  number={3},
  pages={1163--1190},
  year={1991},
  publisher={Institute of Mathematical Statistics}
}

@article{ledoit2003improved,
  title={Improved estimation of the covariance matrix of stock returns with an application to portfolio selection},
  author={Ledoit, Olivier and Wolf, Michael},
  journal={Journal of Empirical Finance},
  volume={10},
  number={5},
  pages={603--621},
  year={2003},
  publisher={Elsevier}
}

@article{ledoit2012nonlinear,
  title={Nonlinear shrinkage estimation of large-dimensional covariance matrices},
  author={Ledoit, Olivier and Wolf, Michael},
  journal={The Annals of Statistics},
  volume={40},
  number={2},
  pages={1024--1060},
  year={2012},
  publisher={Institute of Mathematical Statistics}
}

@article{brandwein1990stein,
  title={Stein estimation: The spherically symmetric case},
  author={Brandwein, Ann Cohen and Strawderman, William E},
  journal={Statistical Science},
  volume={5},
  number={3},
  pages={356--369},
  year={1990},
  publisher={JSTOR}
}

@article{brandwein1991generalizations,
  title={Generalizations of {J}ames-{S}tein estimators under spherical symmetry},
  author={Brandwein, Ann Cohen and Strawderman, William E},
  journal={The Annals of Statistics},
  volume={19},
  number={3},
  pages={1639--1650},
  year={1991},
  publisher={JSTOR}
}

@article{lim2014weighted,
  title={Weighted inductive means},
  author={Lim, Yongdo and P{\'a}lfia, Mikl{\'o}s},
  journal={Linear Algebra and its Applications},
  volume={453},
  pages={59--83},
  year={2014},
  publisher={Elsevier}
}

@article{afsari2011,
    author = {Afsari, Bijan},
    doi = {10.1090/S0002-9939-2010-10541-5},
    issn = {0002-9939},
    journal = {Proceedings of the American Mathematical Society},
    number = {02},
    pages = {655--655},
    title = {{Riemannian ${L}^p$ center of mass: Existence, uniqueness, and convexity}},
    url = {http://www.ams.org/jourcgi/jour-getitem?pii=S0002-9939-2010-10541-5},
    volume = {139},
    year = {2011}
}

@article{groisser2004,
    archivePrefix = {arXiv},
    arxivId = {math/0311011},
    author = {Groisser, David},
    doi = {10.1016/j.aam.2003.08.003},
    eprint = {0311011},
    issn = {01968858},
    journal = {Advances in Applied Mathematics},
    keywords = {Center of mass,Centroid,Convex hull,Nonlinear averaging,Procrustean mean,Shape space},
    number = {1},
    pages = {95--135},
    primaryClass = {math},
    title = {{Newton's method, zeroes of vector fields, and the Riemannian center of mass}},
    volume = {33},
    year = {2004}
}

@article{frechet1948,
    author = {{Fr{\'{e}}chet}, Maurice},
    journal = {Annales de l'Institut Henri Poincar{\'{e}}},
    number = {4},
    pages = {215--310},
    title = {{Les {\'{e}}l{\'{e}}ments al{\'{e}}atoires de nature quelconque dans un espace distanci{\'{e}}}},
    volume = {10},
    year = {1948}
}

@article{moakher2005differential,
  title={A differential geometric approach to the geometric mean of symmetric positive-definite matrices},
  author={Moakher, Maher},
  journal={SIAM Journal on Matrix Analysis and Applications},
  volume={26},
  number={3},
  pages={735--747},
  year={2005},
  publisher={SIAM}
}

@article{donoho2018optimal,
  title={Optimal shrinkage of eigenvalues in the spiked covariance model},
  author={Donoho, David L and Gavish, Matan and Johnstone, Iain M},
  journal={The Annals of Statistics},
  volume={46},
  number={4},
  pages={1742--1778},
  year={2018},
  publisher={NIH Public Access}
}

@article{dawid1994selection,
  title={Selection paradoxes of {B}ayesian inference},
  author={Dawid, Alexander Philip},
  journal={Multivariate Analysis and Its Applications (Hong Kong, 1992). IMS Lecture Notes Monograph Series.},
  volume={24},
  pages={211--220},
  year={1994},
  publisher={Institute of Mathematical Statistics}
}

@article{basser1994mr,
  title={{MR} diffusion tensor spectroscopy and imaging},
  author={Basser, Peter J and Mattiello, James and LeBihan, Denis},
  journal={Biophysical Journal},
  volume={66},
  number={1},
  pages={259--267},
  year={1994},
  publisher={Elsevier}
}

@article{vaillancourt2009high,
  title={High-resolution diffusion tensor imaging in the substantia nigra of de novo {P}arkinson disease},
  author={Vaillancourt, DE and Spraker, MB and Prodoehl, J and Abraham, I and Corcos, DM and Zhou, XJ and Comella, CL and Little, DM},
  journal={Neurology},
  volume={72},
  number={16},
  pages={1378--1384},
  year={2009},
  publisher={AAN Enterprises}
}

@book{callaghan1993principles,
  title={Principles of Nuclear Magnetic Resonance Microscopy},
  author={Callaghan, Paul T.},
  year={1993},
  publisher={Oxford University Press}
}

@inproceedings{wang2004tensor,
  title={Tensor field segmentation using region based active contour model},
  author={Wang, Zhizhou and Vemuri, Baba C},
  booktitle={European Conference on Computer Vision},
  pages={304--315},
  year={2004},
  organization={Springer}
}

@article{chefd2004regularizing,
  title={Regularizing flows for constrained matrix-valued images},
  author={Chefd'Hotel, Christophe and Tschumperl{\'e}, David and Deriche, Rachid and Faugeras, O},
  journal={Journal of Mathematical Imaging and Vision},
  volume={20},
  number={1-2},
  pages={147--162},
  year={2004},
  publisher={Springer}
}

@book{gupta2000matrix,
  title={Matrix Variate Distributions},
  author={Gupta, Arjun K and Nagar, Daya K},
  year={2000},
  publisher={Chapman and Hall/CRC}
}

@article{sasaki2016direct,
  title={Direct density derivative estimation},
  author={Sasaki, Hiroaki and Noh, Yung-Kyun and Niu, Gang and Sugiyama, Masashi},
  journal={Neural Computation},
  volume={28},
  number={6},
  pages={1101--1140},
  year={2016},
  publisher={MIT Press}
}

@article{shen2017posterior,
  title={Posterior contraction rates of density derivative estimation},
  author={Shen, Weining and Ghosal, Subhashis},
  journal={Sankhya A},
  volume={79},
  number={2},
  pages={336--354},
  year={2017},
  publisher={Springer}
}

@article{efron1996using,
  title={Using specially designed exponential families for density estimation},
  author={Efron, Bradley and Tibshirani, Robert},
  journal={The Annals of Statistics},
  volume={24},
  number={6},
  pages={2431--2461},
  year={1996},
  publisher={Institute of Mathematical Statistics}
}

@article{schwartzman2008false,
  title={False discovery rate analysis of brain diffusion direction maps},
  author={Schwartzman, Armin and Dougherty, Robert F. and Taylor, Jonathan E.},
  journal={The Annals of Applied Statistics},
  volume={2},
  number={1},
  pages={153--175},
  year={2008},
  publisher={Institute of Mathematical Statistics}
}

@article{prodoehl2013diffusion,
  title={Diffusion tensor imaging of {P}arkinson's disease, atypical {P}arkinsonism, and essential tremor},
  author={Prodoehl, Janey and Li, Hong and Planetta, Peggy J and Goetz, Christopher G and Shannon, Kathleen M and Tangonan, Ruth and Comella, Cynthia L and Simuni, Tanya and Zhou, Xiaohong Joe and Leurgans, Sue and Corcos, Daniel M. and Vaillancourt, David E.},
  journal={Movement Disorders},
  volume={28},
  number={13},
  pages={1816--1822},
  year={2013},
  publisher={Wiley Online Library}
}

@article{dryden2009non,
  title={Non-{E}uclidean statistics for covariance matrices, with applications to diffusion tensor imaging},
  author={Dryden, Ian L and Koloydenko, Alexey and Zhou, Diwei},
  journal={The Annals of Applied Statistics},
  volume={3},
  number={3},
  pages={1102--1123},
  year={2009},
  publisher={Institute of Mathematical Statistics}
}

@incollection{fletcher2004principal,
  title={Principal geodesic analysis on symmetric spaces: Statistics of diffusion tensors},
  author={Fletcher, P Thomas and Joshi, Sarang},
  booktitle={Computer Vision and Mathematical Methods in Medical and Biomedical
             Image Analysis},
  pages={87--98},
  year={2004},
  publisher={Springer}
}

@article{alexander2005multiple,
  title={Multiple-fiber reconstruction algorithms for diffusion {MRI}},
  author={Alexander, Daniel C},
  journal={White Matter in Cognitive Neuroscience: Advances in Diffusion Tensor Imaging and Its Applications},
  volume={1064},
  pages={113--133},
  year={2005},
  publisher={NEW YORK ACAD SCIENCES}
}

@inproceedings{zhou2008bayesian,
  title={A {B}ayesian method with reparameterization for diffusion tensor imaging},
  author={Zhou, Diwei and Dryden, Ian L and Koloydenko, Alexey and Li, Bai},
  booktitle={Medical Imaging 2008: Image Processing},
  volume={6914},
  pages={69142J},
  year={2008},
  organization={International Society for Optics and Photonics}
}

@article{lenglet2006dti,
  title={{DTI} segmentation by statistical surface evolution},
  author={Lenglet, Christophe and Rousson, Mika{\"e}l and Deriche, Rachid},
  journal={IEEE Transactions on Medical Imaging},
  volume={25},
  number={6},
  pages={685--700},
  year={2006},
  publisher={IEEE}
}

@book{silverman1986density,
  title={Density Estimation for Statistics and Data Analysis},
  author={Silverman, Bernard W},
  year={1986},
  publisher={Chapman and Hall}
}

@article{kubokawa1993estimation,
  title={Estimation of noncentrality parameters},
  author={Kubokawa, Tatsuya and Robert, Christian P and Saleh, A. K. Md E.},
  journal={The Canadian Journal of Statistics/La Revue Canadienne de Statistique},
  volume={21},
  number={1},
  pages={45--57},
  year={1993},
  publisher={JSTOR}
}

@inproceedings{yang2019shrinkage,
  title={Shrinkage Estimation on the Manifold of Symmetric Positive-Definite Matrices with Applications to Neuroimaging},
  author={Yang, Chun-Hao and Vemuri, Baba C},
  booktitle={International Conference on Information Processing in Medical Imaging},
  pages={566--578},
  year={2019},
  organization={Springer}
}

@book{muirhead1982aspects,
  title={Aspects of Multivariate Statistical Theory},
  author={Muirhead, Robb J},
  year={1982},
  publisher={John Wiley \& Sons}
}

@article{schwartzman2010group,
  title={Group comparison of eigenvalues and eigenvectors of diffusion tensors},
  author={Schwartzman, Armin and Dougherty, Robert F and Taylor, Jonathan E},
  journal={Journal of the American Statistical Association},
  volume={105},
  number={490},
  pages={588--599},
  year={2010},
  publisher={Taylor \& Francis}
}

@inproceedings{tuzel2006region,
  title={Region covariance: A fast descriptor for detection and classification},
  author={Tuzel, Oncel and Porikli, Fatih and Meer, Peter},
  booktitle={European Conference on Computer Vision},
  pages={589--600},
  year={2006},
  organization={Springer}
}

@article{schwartzman2008empirical,
  title={Empirical null and false discovery rate inference for exponential families},
  author={Schwartzman, Armin},
  journal={The Annals of Applied Statistics},
  volume={2},
  number={4},
  pages={1332--1359},
  year={2008},
  publisher={Institute of Mathematical Statistics}
}

@article{pennec2006intrinsic,
  title={Intrinsic statistics on Riemannian manifolds: Basic tools for geometric measurements},
  author={Pennec, Xavier},
  journal={Journal of Mathematical Imaging and Vision},
  volume={25},
  number={1},
  pages={127},
  year={2006},
  publisher={Springer}
}

@book{frackowiak2004human,
  title={Human Brain Function},
  author={Frackowiak, Richard S. J. and Friston, Karl J. and Frith, Christopher D. and Dolan, Raymond J. and Price, Cathy J. and Zeki, Semir and Ashburner, John T. and Penn, William D.},
  year={2004},
  publisher={Elsevier}
}

@book{terras2016harmonic,
  title={Harmonic analysis on symmetric spaces—higher rank spaces, positive definite matrix space and generalizations},
  author={Terras, Audrey},
  year={2016},
  publisher={Springer}
}

@article{lenglet2006statistics,
  title={Statistics on the manifold of multivariate normal distributions: Theory and application to diffusion tensor {MRI} processing},
  author={Lenglet, Christophe and Rousson, Mika{\"e}l and Deriche, Rachid and Faugeras, Olivier},
  journal={Journal of Mathematical Imaging and Vision},
  volume={25},
  number={3},
  pages={423--444},
  year={2006},
  publisher={Springer}
}

@article{byrd1995limited,
  title={A limited memory algorithm for bound constrained optimization},
  author={Byrd, Richard H. and Lu, Peihuang and Nocedal, Jorge and Zhu, Ciyou},
  journal={SIAM Journal on Scientific Computing},
  volume={16},
  number={5},
  pages={1190--1208},
  year={1995},
  publisher={SIAM}
}

@incollection{cherian2016positive,
  title={Positive definite matrices: data representation and applications to computer vision},
  author={Cherian, Anoop and Sra, Suvrit},
  booktitle={Algorithmic Advances in Riemannian Geometry and Applications},
  pages={93--114},
  year={2016},
  publisher={Springer}
}

@article{hansen2016efficient,
  title={Efficient shrinkage in parametric models},
  author={Hansen, Bruce E},
  journal={Journal of Econometrics},
  volume={190},
  number={1},
  pages={115--132},
  year={2016},
  publisher={Elsevier}
}

@incollection{feragen2017geometries,
  title={Geometries and interpolations for symmetric positive definite matrices},
  author={Feragen, Aasa and Fuster, Andrea},
  booktitle={Modeling, Analysis, and Visualization of Anisotropy},
  pages={85--113},
  year={2017},
  publisher={Springer}
}

\end{document}